\newtheorem{theorem}{Theorem}
\newtheorem{lemma}[theorem]{Lemma}
\newtheorem{coro}[theorem]{Corollary}
\newtheorem{definition}{Definition}
\newtheorem{prop}[theorem]{Proposition}
\newtheorem{Remark}[theorem]{Remark}
\numberwithin{theorem}{section}
\numberwithin{equation}{section}
\title[Non-contractible Periodic Orbits]{On Non-contractible Periodic Orbits and Bounded Deviations}
\author{Xiao-Chuan Liu}
\address[Liu]{Instituto de Matem\' atica e Estat\' istica da Universidade de S\~ao Paulo,
R. do Mat\~ ao, 1010 - Vila Universitaria, S\~ ao Paulo, Brasil} 
\email{lxc1984@gmail.com}
\author{F\' abio Armando Tal}
\address[Tal]{Instituto de Matem\' atica e Estat\' istica da Universidade de S\~ao Paulo,
R. do Mat\~ ao, 1010 - Vila Universitaria, S\~ ao Paulo, Brasil} 
\email{fabiotal@ime.usp.br}
\begin{document}
\maketitle{}
\begin{abstract}
We present a dichotomy for surface homeomorphisms in the isotopy class of the identity. 
We show that, in the absence of a degenerate fixed point set, either there exists a uniform bound on the diameter of orbits of non-wandering points for the lifted dynamics in the universal covering space, or the map has non-contractible periodic points. We then use this new tool to characterize the dynamics of area preserving homeomorphisms of the torus without non-contractible periodic points, showing that if the fixed point set is non-degenerate, then either the lifted dynamics is uniformly bounded, or the lifted map has a single strong irrational dynamical direction.
\end{abstract}

\section{Introduction}
In this work we study the dynamical consequences of the existence or absence of non-contractible periodic points in conservative surface dynamics. 
This is a direction that has  been garnering increased attention in recent years, with new results due to mostly the increasingly developed field of Brouwer-homeomorphims like techniques, and that has drawn increased interest due not only to its applications in the study of some relevant classes of torus homeomorphisms, but also to its connection to symplectic dynamics, where the subject has been much more largely exploited 
(see for example ~\cite{gurel2013non} as well as references herein, see also ~\cite{orita2017existence}), albeit through very different techniques. 

Let $S$ denote a closed oriented surface, endowed with a metric of constant curvature, and let $\widetilde S$ be its universal covering, with the associated metric, which we simply denote $\| \cdot \|$.
Assume that we have a homeomorphism $f$ of $S$ in the isotopy class of the identity. Taking an isotopy $I=(f_t)_{t\in[0,1]}$ between the identity $f_0=Id$ and $f_1=f$, 
one can naturally associate to each point $x\in S$ 
a curve $\gamma_x:[0,1]\to S$, defined by $\gamma_x(t)=f_t(x)$. 
If $x \in \mathrm{Fix}(f)$, then $\gamma_x$ is a closed loop. 
A point $x\in \mathrm{Fix}(f)$ is called {\it{contractible}} for the isotopy $I$ 
if the loop $\gamma_x$ is null-homotopic; 
otherwise $x$ is called {\it{non-contractible}} for $I$.  
Likewise, if $x$ is a periodic point with minimal period $k$, then by concatenating the curves one arrives again at a closed loop $\Pi_{i={0}}^{k-1} \gamma_{f^{i}(x)}$. 
One defines, analogously to the fixed point case, contractible and non-contractible periodic points for the isotopy. 
The isotopy $I$ lifts in a unique way to an isotopy $\widetilde I= (\widetilde f_t)_{t\in[0,1]}$  in $\widetilde S$, such that $\widetilde f_0= \text{Id}$, and $\widetilde f= \widetilde f_1$ is a lift of $f$
commuting with all deck transformations in $\widetilde S$.
Note an $f$-periodic point $x \in S$ with minimal period $k$ 
is a contractible periodic point for the isotopy $I$ if, and only if, any lift 
$\widetilde x$ of $x$ is also
an $\widetilde f$-periodic point.

Many recent progresses were made on relating the existence of non-contractible periodic points with boundedness of some orbits for the lifted dynamics. The main heuristics is usually the following idea.
In the absence of a very degenerate condition involving the fixed points set of the dynamics, 
either the diameter of an orbit of points with some sort of recurrent property in the lift is uniformly bounded, or one can find non-contractible periodic orbits with arbitrarily large minimal period. 
In~\cite{Tal_noncontractible} this was shown when the ``recurrent property in the lift'' 
meant to be periodic points in the lift, and in ~\cite{Forcing} the condition was weakened for points which are recurrent for the lift. In the present paper, our 
 first theorem improves both of these works, 
extending the result to the class of non-wandering points of the lifted dynamics. 
This is particularly helpful since the class of non-wandering points are much easier to work with than that of 
recurrent points. 

\begin{theorem}\label{non-wandering_Bounded_M} 
Let $S$ be a compact orientable surface, with the universal covering space $\widetilde S$.
Let $f$ be a homeomorphism of $S$ isotopic to the identity, with 
a lift $\widetilde f$ on $\widetilde S$ commuting with the deck transformations.
Assume that the fixed point set of $f$ is contained in a topological open disk.
Then 
\begin{enumerate}
\item either $f$ does not admit non-contractible periodic orbits,
\item or there exists a constant $M>0$, 
 such that, for any $\widetilde f$-non-wandering point $\widetilde z$ and all integer $n$, 
 \begin{equation}
 \| \widetilde f^n (\widetilde z)- \widetilde z \| <M.
 \end{equation}
\end{enumerate}
\end{theorem}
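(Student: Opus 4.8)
The plan is to prove the only substantive implication, namely that the failure of alternative $(1)$ forces alternative $(2)$; the dichotomy then follows at once. So I would assume that $f$ admits a non-contractible periodic orbit $\mathcal{O}$ and, arguing by contradiction, that there is no uniform bound --- that is, that there are $\widetilde f$-non-wandering points $\widetilde z_m$ and integers $j_m$ with $\|\widetilde f^{j_m}(\widetilde z_m)-\widetilde z_m\|\to\infty$ --- and aim for a contradiction with the hypothesis that $\mathrm{Fix}(f)$ lies in a topological open disk. (When $S=S^{2}$ the statement is vacuous, so I assume $\chi(S)\le 0$.)

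First I would put in place the usual transverse-foliation machinery. Since $\mathrm{Fix}(f)$ lies in an open disk $D$, one can choose an identity isotopy $I$ of $f$ that is maximal in the sense of B\'eguin--Crovisier--Le Roux, so that $\mathrm{Fix}(I)\subseteq\mathrm{Fix}(f)\subseteq D$, and then, by Le Calvez's equivariant foliation theorem, a singular foliation $\mathcal{F}$ transverse to $I$ with $\mathrm{Sing}(\mathcal{F})=\mathrm{Fix}(I)\subseteq D$; thus $\mathcal{F}$ is regular on $S\setminus D$. Lifting to $\widetilde S$, each $\widetilde f$-orbit acquires a transverse trajectory for $\widetilde{\mathcal{F}}$, and the forcing theory of \cite{Forcing} becomes available. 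The non-contractible periodic orbit $\mathcal{O}$ has a closed transverse trajectory that is essential in $S$, so by the structure theory of transverse foliations admitting an essential transverse loop one obtains an essential $f$-invariant annular region; in particular, the displacement of every non-wandering point in the direction transverse to the rotational direction of $\mathcal{O}$ is already controlled, and the real work is the complementary direction.

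Next I would manufacture genuine periodic orbits from the assumed failure of $(2)$. For each $m$, the largeness of $\|\widetilde f^{j_m}(\widetilde z_m)-\widetilde z_m\|$ forces the transverse trajectory of $\widetilde z_m$ to cross arbitrarily many leaves of $\widetilde{\mathcal{F}}$; since $\widetilde z_m$ is non-wandering, arbitrarily long pieces of its trajectory return, in $S$, arbitrarily close to their starting points, so that --- closing those pieces up with short arcs and invoking the realization part of the forcing lemma --- one produces \emph{genuine} $f$-periodic orbits $\mathcal{O}_m$ whose transverse trajectories make excursions of length tending to infinity; controlling their deck-translation types and using the compactness of $S$, one further sees that the minimal periods of the $\mathcal{O}_m$ tend to infinity. \textbf{I expect this step to be the main obstacle}: a merely non-wandering --- as opposed to recurrent --- point has a transverse trajectory that need not literally close up, so this realization must be carried out through a limiting argument that carefully exploits the recurrence of long sub-trajectories near their base point, and it is exactly here that one upgrades the constructions of \cite{Tal_noncontractible} and \cite{Forcing} from recurrence to mere non-wandering.

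Finally I would combine $\mathcal{O}$ with the family $\{\mathcal{O}_m\}$ to close the argument. Feeding the $\mathcal{O}_m$ back into the annular picture above --- or, if their rotational types are not positively proportional to that of $\mathcal{O}$, using them directly --- and applying the forcing lemma to the linking between $\mathcal{O}$ and the $\mathcal{O}_m$, one produces a rotational topological horseshoe, hence transverse trajectories realizing an open cone of homotopy directions. A singular foliation with $\mathrm{Sing}(\mathcal{F})\subseteq D$ cannot carry such a spread of transverse directions, so $\mathrm{Fix}(I)$, and therefore $\mathrm{Fix}(f)$, would fail to be confined to a disk, contradicting the hypothesis. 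Making this incompatibility precise --- and, in positive genus, carrying the index bookkeeping through the hyperbolic covering --- is, together with the realization step, where the real difficulty lies.
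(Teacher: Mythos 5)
Your proposal proves the wrong implication, and the statement you are left trying to establish is actually false. The dichotomy intended (and proved) in the paper — see the abstract and the last lines of the proof in Section 3.1 — is: \emph{either $f$ admits non-contractible periodic orbits, or the uniform bound on non-wandering lifted orbits holds}; the printed item (1) of the theorem carries a spurious ``not''. So the substantive content is the implication ``no uniform bound $\Rightarrow$ non-contractible periodic orbits exist''. You instead assume that a non-contractible periodic orbit $\mathcal O$ exists \emph{and} that the bound fails, and you aim at a contradiction with $\mathrm{Fix}(f)$ being contained in a disk. No such contradiction exists: take any area-preserving torus homeomorphism whose rotation set has nonempty interior containing $(0,0)$ and whose fixed point set is finite. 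Then $\mathrm{Fix}(f)$ lies in a disk, Lemma~\ref{franks1989realizing} gives plenty of non-contractible periodic orbits, every point is non-wandering, and every point with non-zero rotation vector has unbounded lifted orbit. Hence the configuration you want to rule out occurs, and your scheme cannot close. Relatedly, your concluding step is itself incorrect: a Brouwer--Le Calvez foliation with $\mathrm{Sing}(\mathcal F)$ in a disk can perfectly well carry an essential transverse loop and transverse trajectories realizing an open cone of homotopy directions — that is exactly what happens in the examples above; what the paper extracts from ``many transverse directions'' is only that the lifted leaves are uniformly bounded (Lemma~\ref{gradient-like}), not any obstruction to the existence of the foliation. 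The intermediate claims that $\mathcal O$ produces an essential $f$-invariant annulus and that this already controls the displacement of all non-wandering points transversally to its direction are also unjustified.

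For comparison, the paper's argument runs in the opposite direction and never manufactures auxiliary periodic orbits $\mathcal O_m$: assuming the bound fails, a covering construction plus pigeonhole (Lemma~\ref{local_good_neighbourhood} and the proof of Theorem~\ref{non-wandering_Bounded_M}) shows that the transverse trajectory of some non-wandering point with large displacement must meet five distinct deck translates of a single leaf; the non-wandering hypothesis is used exactly once (Corollary~\ref{five_translates_we_will_use}, via the continuity Lemma~\ref{lemma_of_continuity}) to make a long finite transverse path return near its start, so that a concatenated admissible path crosses each of the five translates twice; and then the new Proposition~\ref{new_prop} — the block/strip analysis combined with Lemma~\ref{Forcing_proposition}, Lemma~\ref{three_transformations} and Lemma~\ref{translate_intersection} — yields non-contractible periodic orbits directly. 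The step you flagged as the main obstacle (closing up a merely non-wandering trajectory into genuine periodic orbits with long excursions) is thus not the mechanism at all; the forcing machinery produces the non-contractible periodic orbits at the very end, as the conclusion rather than as an intermediate ingredient.
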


We remark here that the hypothesis on the fixed point set cannot be removed, as there are examples where the dichotomy fails (see for example ~\cite{Koro_Tal_irrotational}).
However, failing to satisfy the hypotheses can only happen if there exists a homotopically non-trivial continuum of fixed points, which is a very degenerate condition.

In both ~\cite{Forcing} and ~\cite{Tal_noncontractible}, 
results on the existence of non-contractible periodic points were used to understand some torus homeomorphisms isotopic to the identity. 
Denote by $\text{Homeo}_0^+(\Bbb T^2)$ the set of homeomorphisms on $\Bbb T^2$
which is orientation-preserving and isotopic to the identity, and let 
$\widetilde {\text{Homeo}}_0^+(\Bbb T^2)$ be the set of lifts of homeomorphisms in 
$\text{Homeo}_0^+(\Bbb T^2)$ to the plane. Denote by $\text{Homeo}^+_{0,\text{nw}}(\Bbb T^2)$ the subset of $\text{Homeo}_0^+(\Bbb T^2)$ of homeomorphisms having no wandering points, and by $\widetilde {\text{Homeo}}^{+}_{0,\text{nw}}(\Bbb T^2)$ the set of their lifts to $\Bbb R^2$.  Next, $f\in \text{Homeo}_0^+(\Bbb T^2)$ is called 
{\emph{Hamiltonian}} if it preserve the Lebesgue measure and
it has a lift $\widetilde f$ to the universal covering space such that the rotation vector of $\widetilde f$ with respect to 
the Lebesgue measure is null 
(we refer to the next section for notation). In this case, we call $\widetilde f$ the Hamiltonian lift. 
Say a homeomorphism $f\in \text{Homeo}^+_0(\Bbb T^2)$ has a {\it uniformly bounded lift} $\widetilde f$ if there exists $M>0$  such that, for any $\widetilde z\in \Bbb R^2$ and all integer $n,\, \| \widetilde f^n (\widetilde z)- \widetilde z \| <M$. 
In the previous existing work on conditions ensuring the existence of non-contractible periodic points, 
it was usually fundamental to show that, 
if $f$ is a Hamiltonian homeomorphism 
with non-degenerate fixed point set, then its hamiltonian lift is a 
uniformly bounded lift (see Corollary I of ~\cite{Forcing}). 

In order to see a uniformly bounded lift, the hypothesis that the dynamics is Hamiltonian is crucial,
 since clearly one can consider an ergodic rigid translation in the torus.
 Moreover, if we assume that the dynamics has at least a periodic point there are examples of area-preserving homeomorphisms without any non-contractible periodic orbits, while at the same time, almost all points in the lift have unbounded orbits 
(see~\cite{Salvador_fareast} for an example, even in the smooth setting). 
However, these examples are very particular,
 as we will see more clearly below. 

For any $v\in \Bbb R^2\backslash \{(0,0)\}$, denote by $\text{pr}_v: w\mapsto \langle w,v\rangle/ \| v \|$
the projection of a vector $w$ to the oriented line passing through the origin in the direction $v$. We say $\widetilde f \in \widetilde {\text{Homeo}}_0^+(\Bbb T^2)$
has {\it{strong dynamical direction}} $v$, 
if it is not a uniformly bounded lift, 
and if there exists a constant $M>0$ such that, 
for any point $\widetilde z$ and any $n\ge 0$, 
it holds that 
$-M<\text{pr}_{v}(\tilde f^n(\tilde z)-\tilde z)$ 
and $-M<\text{pr}_{v^{\perp}}(\tilde f^n(\tilde z)-\tilde z)<M$ for any 
$v^{\perp}\in \Bbb R^2\backslash \{(0,0)\}$ which is perpendicular to $v$. 
If additionally $v$ has irrational slope, 
we say $\widetilde f$ has \emph{strong irrational dynamical direction $v$}.
For all known examples of area-preserving, non-Hamiltonian homeomorphisms with contractible periodic points but without non-contractible periodic points, the lifted map had strong irrational dynamical direction. Our next theorem shows that this is in fact the only possibility:

\begin{theorem}\label{rewritten_thm}
Let $f$ be an area-preserving homeomorphism with a lift $\widetilde f \in \widetilde {\text{Homeo}}_0^+(\Bbb T^2)$.
Assume that $f$ has contractible periodic points but 
no non-contractible periodic points, and that the fixed point set of $f$ is contained in a topological disk. Then either $f$ is Hamiltonian and $\widetilde f$ is a uniformly bounded lift, or $\widetilde f$  has a strong irrational dynamical direction.
\end{theorem}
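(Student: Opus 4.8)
The plan is to analyse $\rho(\widetilde f)\subset\Bbb R^2$, the rotation set of $\widetilde f$ in the sense of Misiurewicz--Ziemian, showing that the standing hypotheses collapse it to a point or a line segment, and to read off the stated dichotomy from there, using Theorem \ref{non-wandering_Bounded_M} to control the lifted dynamics on the non-wandering set. First I would note that, since $f$ has a contractible periodic point --- whose rotation vector is $0$ --- we have $0\in\rho(\widetilde f)$; and that $\rho(\widetilde f)$ has empty interior, for otherwise it would contain a rational vector $p/q\neq 0$ in its interior, which by Franks' realization theorem would be the rotation vector of a periodic orbit of period $q$, necessarily non-contractible, contradicting the hypothesis. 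Hence $\rho(\widetilde f)$ is either the singleton $\{0\}$ or a nondegenerate segment lying on a line $\ell$ through the origin.

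If $\rho(\widetilde f)=\{0\}$, then the rotation vector of Lebesgue measure, which lies in $\rho(\widetilde f)$, is $0$, so $f$ is Hamiltonian with Hamiltonian lift $\widetilde f$; since $\mathrm{Fix}(f)$ is inessential and nonempty (a Hamiltonian homeomorphism of the torus has a fixed point), Corollary I of \cite{Forcing} applies and gives that $\widetilde f$ is a uniformly bounded lift --- the first alternative. From now on I assume $\rho(\widetilde f)$ is a nondegenerate segment on a line $\ell$, fixing a unit direction $v$ of $\ell$ later.

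I would then establish three facts about the segment. First, $\ell$ has irrational slope: if not, $\ell$, and hence the segment $\rho(\widetilde f)$, carries a nonzero rational vector, and by passing to an appropriate iterate of $f$ composed with an integer translation and invoking the realization of rational vectors in the rotation set of an area-preserving torus homeomorphism --- as a point in the relative interior of the segment, via a Poincar\'e--Birkhoff-type argument, or as a rational extreme point --- one obtains a periodic orbit of nonzero rotation vector, i.e. a non-contractible one, a contradiction. Second, $f$ is not Hamiltonian: since $\ell$ now has irrational slope, $\rho(\widetilde f)$ contains no nonzero rational vector, so the Hamiltonian lift of $f$ (were $f$ Hamiltonian) would have to coincide with $\widetilde f$, which Corollary I of \cite{Forcing} would then force to be uniformly bounded --- impossible for a nondegenerate $\rho(\widetilde f)$. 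Third, and this is the delicate point, $0$ must be an endpoint of the segment: if it lay in the relative interior, then area-preservation would endow $\widetilde f$ with a fixed point, and, combining this fixed point with the inessentiality of $\mathrm{Fix}(f)$ and with orbits drifting off towards both ends of $\ell$ (which exist because the two extreme points of $\rho(\widetilde f)$ are rotation vectors of ergodic measures), a forcing argument in the spirit of \cite{Forcing} ought to yield a rotational horseshoe, and hence a two-dimensional rotation set, a contradiction. Thus $\rho(\widetilde f)=[0,\beta v]$ for a suitable unit direction $v$ of $\ell$ and some $\beta>0$, with $\ell$ of irrational slope.

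Finally I would extract the strong dynamical direction. D\'avalos' theorem on torus homeomorphisms whose rotation set is a segment of irrational slope provides a constant $M_1$ such that $|\text{pr}_{v^{\perp}}(\widetilde f^n(\widetilde z)-\widetilde z)|<M_1$ for every $\widetilde z$ and every $n\ge 0$, which settles the $v^{\perp}$ direction. For the $v$ direction, since $0$ is the extreme point of $\rho(\widetilde f)$ on the $-v$ side, one expects a uniform lower bound $\text{pr}_v(\widetilde f^n(\widetilde z)-\widetilde z)>-M_2$; I would obtain this first on the non-wandering set of $\widetilde f$ by applying Theorem \ref{non-wandering_Bounded_M} in contrapositive form --- an $\widetilde f$-non-wandering point whose orbit is unbounded below along $v$, together with a fixed point of $\widetilde f$ and the inessential fixed point set, would force non-contractible periodic orbits --- and then propagate the bound to all of $\Bbb R^2$, using that $f$ itself is non-wandering (being area preserving on a compact surface) and the perpendicular bound $M_1$ already in hand. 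Since $\rho(\widetilde f)$ is nondegenerate, $\widetilde f$ is not a uniformly bounded lift, so the two displacement bounds say precisely that $v$ is a strong dynamical direction of $\widetilde f$, and it has irrational slope --- the second alternative. The step I expect to be the main obstacle is excluding $0$ from the relative interior of the segment, where area-preservation and the inessentiality of $\mathrm{Fix}(f)$ have to be combined through the forcing machinery; a secondary technical point is the propagation of the lower bound $M_2$ from the non-wandering set of $\widetilde f$ to the whole plane.
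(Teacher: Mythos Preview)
Your overall skeleton --- reduce $\rho(\widetilde f)$ to $\{0\}$ or a segment with irrational slope having $0$ as an endpoint, then read off the dichotomy --- matches the paper's. Two of your side steps need adjustment but are easily fixed: the fact that $0$ must be an \emph{endpoint} of the segment is not obtained by an ad hoc horseshoe argument but is precisely Theorem~C of \cite{Forcing} (stated as Lemma~\ref{ThmC_Forcing} here), and the perpendicular bound for a segment of irrational slope is not D\'avalos' theorem (which treats rational slope) but the result of \cite{Guilherme_Thesis} (Lemma~\ref{Guilherme}).

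The genuine gap is in the last step, the lower bound $\mathrm{pr}_v(\widetilde f^n(\widetilde z)-\widetilde z)>-M_2$ for \emph{all} $\widetilde z$. You propose to get it on $\Omega(\widetilde f)$ via Theorem~\ref{non-wandering_Bounded_M} and then ``propagate'' using that $f$ is non-wandering. But when $\rho(\widetilde f)$ is a nondegenerate segment, $\Omega(\widetilde f)$ is typically tiny: any point whose rotation vector is nonzero has a lift whose forward orbit escapes to infinity, hence is $\widetilde f$-wandering, so Theorem~\ref{non-wandering_Bounded_M} gives information essentially only near the lifted fixed points. There is no mechanism by which $f$ being non-wandering on $\Bbb T^2$, together with the perpendicular bound, forces a one-sided bound along $v$ for all orbits; indeed this is exactly the content of Theorem~\ref{main_thm}, whose proof occupies Section~5 and uses Proposition~\ref{new_prop} again but in a different way --- one constructs trajectories that oscillate many times across a fixed vertical strip and applies a pigeonhole argument on the finitely many leaf--translate configurations in that strip. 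Theorem~\ref{non-wandering_Bounded_M} alone does not suffice; your ``secondary technical point'' is in fact the heart of the matter, and you should invoke Theorem~\ref{main_thm} (or reproduce its argument) rather than attempt a propagation from $\Omega(\widetilde f)$.
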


Theorem~\ref{rewritten_thm} will be deduced as a consequence of several previous 
results on torus homeomorphisms in the isotopy class of the identity, 
as well as a result that is the fundamental new contribution in the second part of the present paper. The critical step needed to prove Theorem ~\ref{rewritten_thm} is to understand the dynamics of a particular class of homeomorphisms, a class whose rotation set is a line segment, such that one endpoint is a rational vector, and the other one is a irrational vector 
$(\alpha, \beta)$, where $\alpha/\beta$ is irrational. We postpone the introduction of the concept of rotation sets to the next section. Here we only comment this special 
 rotation set was studied in depth in the paper~\cite{Salvador_Liu}, where 
 discussions about the stable and unstable behaviour were given.
In particular, it was shown in~\cite{Salvador_Liu} that,
 if $\widetilde f$ is a lift of a non-wandering diffeomorphism, then under some broadly satisfied conditions $\widetilde f$ admits bounded deviation along the direction pointing to the rational endpoint of the segment (see Theorem 1.3 of~\cite{Salvador_Liu}). 
Here, using similar ideas as in the proof of Theorem~\ref{non-wandering_Bounded_M}, we are able to extend the bounded deviations result for all non-wandering homeomorphisms. 
\begin{theorem}\label{main_thm} 
Let $\widetilde f\in  
\widetilde{\text{Homeo}}^+_{0,\text{nw}}(\Bbb T^2)$, 
whose rotation 
set $\rho(\widetilde f)$ is a line segment from $(0,0)$ to some totally irrational 
vector $(\alpha,\beta)$. 
Then $\widetilde f$ has bounded deviation along the direction 
$-(\alpha,\beta)$.
\end{theorem}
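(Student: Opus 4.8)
The plan is to argue by contradiction, closely following the scheme used to prove Theorem~\ref{non-wandering_Bounded_M}. Set $v=(\alpha,\beta)$ and suppose that $\widetilde f$ fails to have bounded deviation along $-v$: there exist $\widetilde z_k\in\mathbb R^2$ and integers $n_k\ge 1$ with $\mathrm{pr}_{-v}\bigl(\widetilde f^{\,n_k}(\widetilde z_k)-\widetilde z_k\bigr)\to +\infty$. I would first record the only two properties of $\rho(\widetilde f)$ that the argument uses: since $v$ is totally irrational, the segment $\rho(\widetilde f)=[(0,0),v]$ contains no nonzero rational vector and is contained in the half-line $\{tv:\ t\ge 0\}$, so every $w\in\rho(\widetilde f)$ satisfies $\langle w,v\rangle\ge 0$ and, since the rotation vector of any periodic point lies in $\rho(\widetilde f)$, $f$ has no non-contractible periodic orbit. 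By compactness of $\mathbb T^2$ one may pass to a subsequence along which the projections $z_k$ converge to a non-wandering point $z_*$.

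Next I would bring in the Le~Calvez--Tal machinery exactly as in the proof of Theorem~\ref{non-wandering_Bounded_M}: fix a maximal identity isotopy $I$ of $f$, with singular set $X=\mathrm{Fix}(I)$, and a foliation $\mathcal F$ of $\mathbb T^2\setminus X$ transverse to $I$; lift $I$ and $\mathcal F$ to $\widetilde I$ and $\widetilde{\mathcal F}$ on $\mathbb R^2\setminus\widetilde X$ and work with the associated whole-orbit transverse trajectories. Since the transverse trajectory of a point stays within a bounded Hausdorff distance of its $\widetilde f$-orbit — the bound depending only on the diameter of the isotopy and on the foliated charts — the displacement hypothesis translates into the statement that the transverse trajectory of $\widetilde z_k$ between times $0$ and $n_k$ travels a quantity tending to $+\infty$ in the $-v$ direction.

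The crux, and the step I expect to be the main obstacle, is to convert this, for a merely \emph{non-wandering} (rather than recurrent) point, into a closed transverse path, or an $\mathcal F$-transverse self-intersection, that winds in the $-v$ direction. This is exactly the kind of difficulty that is overcome in the proof of Theorem~\ref{non-wandering_Bounded_M}, and I would reuse that technique: employing the non-wandering hypothesis to bring orbit segments back near their starting points, and the Le~Calvez--Tal procedure for concatenating transverse paths with nearby endpoints up to a bounded error, one produces for each large $k$ a transverse loop $\Gamma_k$ whose homology class $h_k\in\mathbb Z^2$ satisfies $\langle h_k,v\rangle<0$ — the error terms are harmless precisely because the $-v$-component being tracked tends to $+\infty$ — and one arranges that $\Gamma_k$ either has an $\mathcal F$-transverse self-intersection or is traversed repeatedly by the transverse trajectory of a non-wandering point; here the comparison with a point of an invariant measure of rotation vector $(0,0)$, whose displacement along $v$ cannot drift to $+\infty$, is what forces the crossing. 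Feeding $\Gamma_k$ into the forcing theorem of~\cite{Forcing} then yields a periodic point whose rotation vector $w$ is a positive rational multiple of the homological direction of $\Gamma_k$, so $\langle w,v\rangle<0$; but $w\in\rho(\widetilde f)$, contradicting $\langle w,v\rangle\ge 0$.

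This contradiction shows that there is $M>0$ with $\mathrm{pr}_{-v}(\widetilde f^{\,n}(\widetilde z)-\widetilde z)<M$ for every $\widetilde z\in\mathbb R^2$ and every $n\ge 0$, which is the assertion. The points demanding care in the full proof are: performing the subsequence reduction and the approximation estimates with only the non-wandering hypothesis available; ensuring that the bounded error in the transverse-path concatenation does not swamp the divergent $-v$-component, which is where largeness of $k$ is used; and verifying that the output of the forcing theorem, in the form applied here, genuinely places a rotation vector on the side $\langle\,\cdot\,,v\rangle<0$, the only feature of $\rho(\widetilde f)$, beyond $(0,0)\in\rho(\widetilde f)$, that the argument exploits.
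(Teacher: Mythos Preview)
Your overall plan --- argue by contradiction, use the Le~Calvez--Tal machinery, and eventually force a non-contractible periodic orbit, which is impossible since $\rho(\widetilde f)$ contains no nonzero rational vector --- matches the paper's. But the heart of your argument, the construction of the loop $\Gamma_k$ with $\langle h_k,v\rangle<0$, is a genuine gap, and the paper does not proceed this way.

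Concretely: from a sequence $\widetilde z_k$ with $\mathrm{pr}_{-v}(\widetilde f^{n_k}(\widetilde z_k)-\widetilde z_k)\to+\infty$ and the non-wandering property of $f$ on $\mathbb T^2$, you do not get any control on the homology of the closed trajectory you form. If $f^m(z_k)$ is near $z_k$, then $\widetilde f^m(\widetilde z_k)\approx\widetilde z_k+u$ for some $u\in\mathbb Z^2$, and there is no reason for $\langle u,v\rangle<0$; the orbit may well go far in $-v$ and then farther back in $+v$ before returning. Your sentence about ``comparison with a point of an invariant measure of rotation vector $(0,0)$'' does not explain how such a crossing is forced. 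Also, ``reuse the technique from Theorem~\ref{non-wandering_Bounded_M}'' presupposes that $\mathrm{Fix}(f)$ is inessential, which is an explicit hypothesis there but not here; the paper has to rule out essential fixed-point set separately (Lemma~\ref{unbounded_orbits_to_the_left}, via Proposition~\ref{unbounded_trajectory}).

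The paper's route is substantially different. It first upgrades the unbounded-deviation hypothesis to the existence of single points whose forward orbit is unbounded in the $-v$ direction (Proposition~\ref{unbounded_trajectory} and Lemma~\ref{unbounded_orbits_to_the_left}), and shows such points are dense in $\mathrm{Ess}(f)$; likewise for the $+v$ direction (Lemma~\ref{limsup_is_large}). Alternating these two density statements produces, for any $L$, a point whose orbit oscillates $L$ times across a fixed vertical strip (Lemma~\ref{oscila}). Meanwhile it shows the leaves of $\widetilde{\mathcal F}$ are uniformly bounded (Lemma~\ref{bounded_leaves}), so each crossing of the strip forces the transverse trajectory to meet five translates of one of finitely many leaves, with translations of bounded norm (Lemmas~\ref{corta5} and~\ref{valordeM}). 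Pigeonhole over the $L$ crossings then yields a single trajectory meeting the \emph{same} five translates twice, and Proposition~\ref{new_prop} gives the non-contractible periodic orbit. There is no attempt to build a loop with prescribed homology or to realize a rotation vector outside $\rho(\widetilde f)$.
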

This result, together with the main theorem (Theorem A) 
of~\cite{Guilherme_Thesis} will imply $\widetilde f$ has strong irrational dynamical direction.
We stress that, the same as in ~\cite{Guilherme_Thesis}, 
the proof of Theorems \ref{non-wandering_Bounded_M} and \ref{main_thm} rely on the newly developed forcing theory of transverse trajectories for surface homeomorphisms (see~\cite{Forcing}), and a crucial step in both is the use of the technical Proposition~\ref{unbounded_trajectory}, which may be  useful elsewhere. 
Let us refer to~\cite{Guilherme_Thesis} for more references, backgrounds and other related discussions. 

The rest of the paper will be organized as follows. 
In Section 2 we introduce notation and preliminaries. 
In Section 3, we prove Theorem~\ref{non-wandering_Bounded_M}.
In Section 4, we show Proposition~\ref{unbounded_trajectory}.
In section 5, we prove Theorem~\ref{main_thm} and then we prove Theorem~\ref{rewritten_thm}.

\section{Generalities and the Forcing Theory}\label{notation_section}

\subsection{Topological Dynamics on Surfaces and Torus}
Denote by $\Bbb S^1= \Bbb R/\Bbb Z$ the unit circle and by $\Bbb T^2= \Bbb S^1 \times \Bbb S^1$ the $2$-torus. Let $S$ be an oriented surface. We will denote by $\widetilde S$ the universal covering space of $S$, and by $\pi_{S}:\widetilde S\to S$ the covering projection. The set of \emph{Deck Transformations} of $S$ are the isometries of $\widetilde S$ that lift the identity map on $S$. 
A loop in $S$ is a continuous function $B:\Bbb S^1\to  S$. 
Given a loop $B$, let us call the natural lift of $B$ to the curve $\beta:\Bbb R \to S$ defined by 
$\beta(t)=B(\pi_{\Bbb S^1}(t))$, where $\pi_{\Bbb S^1}:\Bbb R\to \Bbb S^1$ is the canonical projection from the line on the circle. In particular $\beta$ is a $1$-periodic function.

If furthermore $S$ is a closed surface, a set $K\subset S$ is called \emph{inessential}
 if  it is contained in some topological disk $D\subset S$, otherwise it is called \emph{essential}. If the complement of $K$ is inessential, then $K$ is called \emph{fully essential}, in which case it must intersect any homotopically non-trivial loop in $S$. 
An open connected essential set $U$ is called \emph{annular} if it
is homeomorphic to an open annulus.

For any closed surface $S$, denote by $\text{Homeo}^+_0(S)$ 
(respectively, $\text{Homeo}^+_{0,\text{nw}}(S)$) the set of 
orientation-preserving homeomorphisms (respectively, non-wandering orientation-preserving homeomorphisms) on
$S$ which are isotopic to the identity. 
Also denote by
$\widetilde{\text{Homeo}}^+_0(S)$ 
(respectively, $\widetilde{\text{Homeo}}^+_{0,\text{nw}}(S)$)
the set of homeomorphisms on $\Bbb R^2$ which are 
lifts of homeomorphisms in 
$\text{Homeo}^+_0(S)$ (respectively, $\text{Homeo}^+_{0,\text{nw}}(S)$). 
For any $f\in \text{Homeo}^+_0(S)$, 
a point $x \in S$ is called \emph{$f$-essential} if,
for any open set $U$ containing $x$, 
the set $W=\bigcup_{n\in \Bbb Z}f^n(U)$ is an essential set.
Otherwise, $x$ is called \emph{$f$-inessential}.
Denote by $\text{Ess}(f)$ the set of $f$-essential points, which is 
$f$-invariant and closed. Write $\text{Ine}(f)= S \backslash \text{Ess}(f)$, which is $f$-invariant and open. We refer to \cite{Strictly_Toral} for more information on these notions.

\subsection{Oriented Singular 
Foliation and Transverse Paths}
We call $\mathcal F$ an
\textit{oriented singular foliation}
 of $S$, if there exists a closed set $\text{Sing}(\mathcal F)\subset S$, 
called the \textit{singular set}, and if $\mathcal F$ is a partition of 
$\text{Dom}(\mathcal F): =S \backslash \text{Sing}(\mathcal F)$
into immersed real lines or circles, called \textit{leaves}, with a continuous choice of orientation. 
We also lift $\mathcal F$
to an oriented singular foliation $\widetilde{\mathcal F}$ on $\widetilde S$, 
where the singular set is $\text{Sing}(\widetilde{\mathcal F})=\pi_{S}^{-1}(\text{Sing}(\mathcal F))$, 
and therefore $\text{Dom}(\widetilde {\mathcal F})= \pi_{S}^{-1}(\text{Dom}(\mathcal F))$.
Given any $z\in S \backslash \text{Sing}(\mathcal F)$, there exists a local chart 
$h:W\to \Bbb R^2$, where $W$ is a neighbourhood of $z$, such that $h(\mathcal F\mid_W)$ is the foliation 
into vertical lines, oriented downward. In this case, 
we call $h$ a local \textit{trivialization chart around $z$}. 

A path $\gamma: I\to \text{Dom}(\mathcal F)$ is 
called \textit{transverse to $\mathcal F$} (or, $\gamma$ is an \textit{$\mathcal F$-transverse path}) 
if for any 
$t_0$ in the interior of $I$, 
there exist $\delta>0$ and a local trivialization chart $h$ around the point 
$\gamma(t_0)$, such that the function
\begin{equation}
t \mapsto \text{pr}_1\circ h\circ \gamma(t),
\end{equation}
is strictly increasing for $t\in (t_0-\delta,t_0+\delta)$, where $\text{pr}_1:\Bbb R^2 \to \Bbb R$ is the projection in the first canonical coordinate. 
A loop $B:\Bbb S^1 \to \text{Dom}(\mathcal F)$ is said to be $\mathcal{F}$-transverse if its natural lift is $\mathcal{F}$-transverse.


\subsection{Maximal Isotopy and Transverse Foliation}
Let $f: S \to S$ be a homeomorphism isotopic to the identity. 
Given an isotopy $I=\{f_t\}_{t\in[0,1]}$ from the $f_0=\text{Id}$ to $f_1=f$,
denote by $\text{Fix}(I)$ the set of points which are fixed by $f_t$ 
for all $t\in [0,1]$.
Write $\text{Dom}(I) =  S \backslash \text{Fix}(I)$. 
For any point $z\in \text{Dom}(I)$, 
consider the isotopy path 
\begin{align}\label{notation_isotopy}
\gamma_z = I^{[0,1]}(z): 
            [0,1] &     \to \text{Dom}(I), \\
                  t &     \mapsto f_t(z). \nonumber
 \end{align} 
Then we can extend the isotopy path in the following way. For any 
$t\in \Bbb R$, write $t=n_0+r$, where $n_0$ is an integer and $r\in [0,1)$. 
Then define 
\begin{equation}
f_t(z) =  f_r \circ f^{n_0}(z).
\end{equation}
By the \textit{whole isotopy trajectory} we mean the following.  
\begin{align}
I^{\Bbb R}(z): \Bbb R &\to \text{Dom}(I),\\ 
t &\mapsto f_t(z).  
\end{align}
In particular, $I^{\Bbb R}(z)(0)=f_0(z)=z.$ By abusing notation, sometimes we also 
call the image of a path by the same name. 
For example, denote by $I^{[a,b]}(z) =  I^{\Bbb R}(z)([a,b])$ the image of a finite isotopy trajectory. We also call \textit{whole isotopy trajectory} 
for $I^{\Bbb R}(z)(\Bbb R)$.


Given isotopies $I'=\{f'_t\}_{t\in [0,1]}$ and $I=\{f_t\}_{t\in [0,1]}$, we say that $I'\preceq I$ if $\mathrm{Fix(I')}\subset\mathrm{Fix}(I)$. An isotopy is said to be a \emph{maximal isotopy} if it is a maximum element for this partial order. It was shown in ~\cite{BCL_Fixed} that given any isotopy $I'$ one could always find a maximal isotopy larger than it (see also~\cite{Jaulent} for a previous existence result which was already enough for most applications). A crucial property of a maximal isotopy $I=(f_t)_{t\in [0,1]}$ is that, for every $z\in \mathrm{Fix}(f_1)\setminus\mathrm{Fix}(I)$, the closed curve $\gamma_z$ is not contractible in $\text{Dom}(I)$.

A path is called \emph{transverse to $\mathcal F$} (or \emph{$\mathcal F$-transverse}) if locally it passes through each leaf from its right to its left. 
An oriented singular foliation $\mathcal F$ on $S$ is called a \emph{transverse foliation for the maximal isotopy $I$}, if the singular set $\text{Sing}(\mathcal F)$ coincides with the fixed point set $\text{Fix}(I)$, 
and if  for any $z\in \text{Dom}(I)$, the isotopy path $\gamma_z$ is homotopic in $\text{Dom}(I)$, relative to its endpoints,
to a path which is transverse to the foliation $\mathcal F$. 
Write $\text{Dom}(\mathcal F)= S\backslash \text{Sing}(\mathcal F)$ 
and note that $\text{Dom}(\mathcal F)=\text{Dom}(I)$. $\mathcal{F}$ is also called a \emph{Brouwer-Le Calvez} foliation.

For any $z\in \text{Dom}(I)$, 
denote by $I_{\mathcal F}(z)$ the set of $\mathcal F$-transverse paths that are homotopic in $\text{Dom}(I)$, relative to its endpoints, to $\gamma_z$. 
Also, for $n\geq 1$, denote by $I_{\mathcal F}^n(z):=\prod_{k=0}^{n-1}I_{\mathcal F}(f^kz)$ the set of transverse paths obtained by concatenating the corresponding transverse paths. 
Similarly, denote $I_{\mathcal F}^{\Bbb R}(z)$ the set of infinite transverse path $\prod_{k=-\infty}^{\infty}I_{\mathcal F}(f^kz)$. By abusing notation, when the context is clear, 
we sometimes use $I_{\mathcal F}^n(z)$ or $I_{\mathcal F}^{\Bbb R}(z)$ to denote one specific transverse path contained in the corresponding sets of paths.

The following important theorem, stated here as a lemma, was a breakthrough result and it 
provides a completely new tool for understanding surface dynamics in general. 
\begin{lemma}[see~\cite{LC2005}]\label{transverse_foliation_thm}
Let $f\in \text{Homeo}^+_0(S)$ and let $I=(f_t)_{t\in[0,1]}$ be a maximal isotopy such that $f_0=Id$ and $f_1=f$. 
Then there exists a transverse foliation $\mathcal F$ for $I$. 
\end{lemma}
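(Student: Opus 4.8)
The plan is to follow the strategy of Le~Calvez~\cite{LC2005}. Since the definition of a transverse foliation forces $\text{Sing}(\mathcal F)=\text{Fix}(I)$, it suffices to construct an ordinary oriented foliation $\mathcal F_0$ of the open surface $N:=\text{Dom}(I)=S\setminus\text{Fix}(I)$ such that every isotopy path $\gamma_z$, $z\in N$, is homotopic in $N$ rel endpoints to an $\mathcal F_0$-transverse path; one then simply declares $\text{Sing}(\mathcal F)=\text{Fix}(I)$ and $\mathcal F|_N=\mathcal F_0$. The key consequence of maximality of $I$ that we will use is that the restricted isotopy $I|_N$ has no contractible fixed point: for every $z\in\text{Fix}(f)\cap N$ the loop $\gamma_z$ is not null-homotopic in $N$.

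The first real step is to lift the picture to a simply connected cover. Working on each connected component of $N$ separately, let $\widehat N$ be its universal cover (homeomorphic to $\Bbb R^2$ in all cases of interest), let $G$ be the corresponding deck group, and let $\widehat f$ be the lift of $f|_N$ obtained by lifting the isotopy $I|_N$ from the identity. Then $\widehat f$ is orientation preserving, commutes with every element of $G$, and is \emph{fixed-point free}, since a fixed point of $\widehat f$ would project to a contractible fixed point of $I|_N$. Hence $\widehat f$, and likewise $g\widehat f$ for each $g\in G$, is a Brouwer homeomorphism of the plane, and Brouwer's translation theory is available.

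The heart of the argument is then to build an oriented topological foliation $\widehat{\mathcal F}$ of $\widehat N$ which is $G$-invariant and \emph{dynamically transverse} to $\widehat f$, meaning that each leaf $\lambda$ is a Brouwer line: $\widehat f(\lambda)$ lies in the open region to the left of $\lambda$ and $\widehat f^{-1}(\lambda)$ to the right, so that any path from $\widehat z$ to $\widehat f(\widehat z)$ is homotopic rel endpoints to a positively $\widehat{\mathcal F}$-transverse path. This is carried out by means of \emph{free brick decompositions}: one produces a $G$-invariant, locally finite decomposition of $\widehat N$ into topological disks that is maximal among those which are ``free'' for $\widehat f$, the absence of cycles in the induced adjacency relation being guaranteed by Brouwer's lemma on translation arcs; one then thickens the $1$-skeleton, using the attractor--repeller partial order it carries, into the leaves of $\widehat{\mathcal F}$. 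Existence of a maximal free decomposition follows from a Zorn-type argument performed $G$-equivariantly, local finiteness being preserved in the quotient. Since $\widehat{\mathcal F}$ is $G$-invariant it descends to an oriented foliation $\mathcal F_0$ of $N$; and for any $z\in N$, lifting $\gamma_z$ to a path from $\widehat z$ to $\widehat f(\widehat z)$, deforming it to an $\widehat{\mathcal F}$-transverse path upstairs, and projecting the homotopy down shows that $\gamma_z$ is homotopic in $N$ rel endpoints to an $\mathcal F_0$-transverse path, as required.

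The main obstacle is precisely this last construction: arranging $\widehat{\mathcal F}$ to be simultaneously a genuine topological foliation (the local transverse segments available near each point must be glued into globally coherent immersed leaves), dynamically transverse to $\widehat f$ (where the translation-arc lemma and the no-cycle property of a maximal free decomposition are essential), and equivariant under the full deck group while remaining locally finite. Reconciling these three requirements is the substance of the theorem, and the complete construction is carried out in~\cite{LC2005}.
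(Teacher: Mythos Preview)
The paper does not give its own proof of this lemma: it is stated with the attribution ``see~\cite{LC2005}'' and used as a black box. Your outline faithfully summarizes the strategy of Le~Calvez's original argument (lift to the universal cover of $\text{Dom}(I)$, use maximality to get a fixed-point-free Brouwer homeomorphism, build a $G$-equivariant foliation by Brouwer lines via maximal free brick decompositions, and project), so there is nothing to compare---your sketch and the cited source agree.
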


Note that $f, I$ and $\mathcal F$ are naturally lifted to $\widetilde f$, $\widetilde I$ and $\widetilde {\mathcal F}$ respectively
in the covering space $\widetilde S$ of $S$. These objects a compatible int he following sense. 
$\widetilde I$ is a maximal isotopy from $Id$ to $\widetilde f$, and $\widetilde F$ is a transverse foliation to $\widetilde I$.
We find that $\text{Dom}(\widetilde{\mathcal F})=\text{Dom}(\widetilde I)$ is the complement of 
the corresponding singular set for both the isotopy $\widetilde I$ and the foliation 
$\widetilde{\mathcal F}$.
Furthermore, as the restriction of $\widetilde I$ to $\text{Dom}(\widetilde I)$ is also isotopic to the identity, one can lift all these objects to the universal covering of this latter surface, namely $\text{Dom}(\mathcal F)^{\text{uni}}$, which is homeomorphic to a union of disjoint copies of $\Bbb R^2$. The lifted objects will be denoted $f^{\text{uni}}$, $I^{\text{uni}}$ and $\mathcal {F}^{\text{uni}}$. The restriction of  $f^{\text{uni}}$ to each connected component of $\text{Dom}(\mathcal F)^{\text{uni}}$ is a Brouwer homeomorphism, 
which is, by definition, an orientation-preserving homeomorphism on $\Bbb R^2$
without fixed points. Finally, $\mathcal {F}^{\text{uni}}$ is a non-singular foliation, and each leaf 
$\ell^{\text{uni}}$ of $\mathcal {F}^{\text{uni}}$ is an $f^{\text{uni}}$-Brouwer line, that is, $f^{\text{uni}}(\ell^{\text{uni}})$ is contained in the left of $\ell^{\text{uni}}$, and $(f^{\text{uni}})^{-1}(\ell^{\text{uni}})$ is contained in the right of $\ell^{\text{uni}}$.

We need the following definitions from~\cite{Forcing}:
\begin{definition} Let $\mathcal F$ be an oriented foliation on $\Bbb R^2$, either singular or non-singular. 
Let $\gamma_0:[a,b]\to S \backslash \text{Sing}(\mathcal F), \gamma_1:[a',b']\to S \backslash \text{Sing}(\mathcal F)$ be two $\mathcal F$-transverse paths. 
We say $\gamma_0$ and $\gamma_1$ are 
\emph{$\mathcal F$-equivalent} 
 if there is a homotopy $H: [a,b]\times [0,1]\to S\backslash \text{Sing}(S)$ and an increasing homeomorphism $\phi:[a,b]\to [a',b']$,
such that for all 
$c\in[a,b], 
H(c,0)=\gamma_0(c), 
H(c,1)=\gamma_1(\phi(c))$, 
and the image of $H(c,t) \big |_{t\in [0,1]}$ is contained in a single leaf of $\mathcal F$. 
\end{definition}
\begin{definition} 
Let $J$ be an interval (possibly unbounded) in $\Bbb R$. 
We say an $\mathcal F$-transverse path $\gamma: J \to \text{Dom}(I)$ 
is admissible if for any compact subinterval $J_1\subset J$, there exists $z\in \text{Dom}(I)$ such that the restriction of $\gamma$ to $J_1$ is $\mathcal F$-equivalent to a  subpath of $I_{\mathcal{F}}^{\Bbb R}(z)$. Furthermore, if there exists $z\in \text{Dom}(I)$ and a positive integer $n$ such that $\gamma$ is 
$\mathcal F$-equivalent to an element in 
$I_{\mathcal F}^n(z)$, then $\gamma$ is said to be \textit{admissible of order $n$}. If $\gamma$ is $\mathcal F$-equivalent with a subpath of such an element, then it is said to be admissible of order $\leq n$.
\end{definition}

The following continuity property for admissible $\mathcal F$-transverse paths will be useful. 
\begin{lemma}[Lemma 17 of \cite{Forcing}]\label{lemma_of_continuity} 
Let $I$ be a maximal isotopy and $\mathcal F$ is a transverse foliation 
for $I$. For any $z\in \text{Dom}(I)$ and any $n\geq 1$, there exists a neighbourhood $W$ of $z$ such that, for every $z', z''\in W$, the path $I^n_{\mathcal F}(z')$ is $\mathcal F$-equivalent to a subpath of $I^{n+2}_{\mathcal F}(f^{-1}(z''))$. 
\end{lemma}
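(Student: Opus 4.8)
\emph{Proof plan.} The plan is to combine the continuity of the isotopy with the local triviality of the transverse foliation, reducing the statement to a finite combinatorial comparison of the leaves crossed by the two transverse paths. Fix $z\in\text{Dom}(I)$ and $n\ge 1$; write $\ell_w$ for the leaf of $\mathcal F$ through a point $w\in\text{Dom}(\mathcal F)$, fix a representative $\bar\gamma$ of $I^n_{\mathcal F}(z)$ --- a compact $\mathcal F$-transverse path running $z\to f(z)\to\cdots\to f^n(z)$ --- and a representative $\bar\Gamma$ of $I^{n+2}_{\mathcal F}(f^{-1}z)$ obtained from $\bar\gamma$ by prepending a representative of $I_{\mathcal F}(f^{-1}z)$ and appending one of $I_{\mathcal F}(f^n z)$. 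The two inputs are: \emph{(i)} the isotopy $I=(f_t)$ is continuous, so $w\mapsto\gamma_w$ is uniformly continuous, and --- via a local analysis near a point, using that inside a trivialisation chart the isotopy path is homotopic rel endpoints to an $\mathcal F$-transverse path --- for $w$ near $z$ the class $I^n_{\mathcal F}(w)$ has a representative uniformly close to $\bar\gamma$ with image in an arbitrarily thin tube about $\bar\gamma$, and likewise for $I^{n+2}_{\mathcal F}(f^{-1}w)$ about $\bar\Gamma$; \emph{(ii)} $\mathcal F$ is locally trivial, so in a trivialisation chart the leaves are downward vertical lines and the $\mathcal F$-transverse paths are exactly those with strictly increasing first coordinate, whence in such a chart two $\mathcal F$-transverse paths are $\mathcal F$-equivalent iff they sweep the same interval of first coordinates, and one is $\mathcal F$-equivalent to a subpath of the other iff its interval is contained in the other's.

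First I would cover the compact set $\bar\Gamma$ by finitely many trivialisation charts, partition its parameter interval into pieces each lying in a single chart, and take $\eta>0$ smaller than a Lebesgue number of the cover. Using \emph{(i)}, I would choose a neighbourhood $W$ of $z$ so small that for all $z',z''\in W$ the selected representative of $I^n_{\mathcal F}(z')$ is $\eta$-close to $\bar\gamma$ (hence follows the same chart-sequence) and that of $I^{n+2}_{\mathcal F}(f^{-1}z'')$ is $\eta$-close to $\bar\Gamma$, and shrink $W$ further to meet the conditions below.

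The key observation is that along $\bar\gamma$ every interior junction is crossed \emph{strictly}: for $1\le k\le n-1$, in the local chart $\bar\gamma$ enters before the junction and leaves after it, so the interval of leaves it sweeps there contains $\ell_{f^k(z)}$ in its interior; after an $\eta$-perturbation this interior still contains $\ell_{f^k(z')}$, and likewise for the interval swept near $f^k(z'')$ by $I^{n+2}_{\mathcal F}(f^{-1}z'')$. Passing to the longer path is precisely what turns the endpoints $z'$ and $f^n(z')$ of $I^n_{\mathcal F}(z')$ --- which lie on the \emph{boundary} of the intervals it sweeps --- into interior junctions: the prepended piece $I_{\mathcal F}(f^{-1}z'')$ enters the chart around $z$ through a leaf strictly left of $\ell_z$, so near $z''$ the path $I^{n+2}_{\mathcal F}(f^{-1}z'')$ sweeps an interval of leaves containing $\ell_{z'}$ in its interior, on whichever side of $\ell_{z''}$ it lies, and symmetrically the appended $I_{\mathcal F}(f^n z'')$ covers $\ell_{f^n(z')}$ near $f^n(z'')$. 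I would then trim $I^{n+2}_{\mathcal F}(f^{-1}z'')$ at a crossing of $\ell_{z'}$ near $z''$ and at a crossing of $\ell_{f^n(z')}$ near $f^n(z'')$, obtaining a subpath $\sigma$ which sweeps exactly the leaves swept by $I^n_{\mathcal F}(z')$ (in the bulk because of the $\eta$-closeness and the chart decomposition, at the ends because of the trimming). Running through the chart decomposition, $I^n_{\mathcal F}(z')$ and $\sigma$ sweep the same interval of first coordinates in each successive chart, hence are $\mathcal F$-equivalent there by sliding along vertical leaves; gluing the increasing reparametrisations and the local homotopies into a single homotopy with values in $\text{Dom}(\mathcal F)$ yields the desired $\mathcal F$-equivalence of $I^n_{\mathcal F}(z')$ with the subpath $\sigma$ of $I^{n+2}_{\mathcal F}(f^{-1}z'')$.

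The conceptual content is light --- continuity of the isotopy, local triviality of $\mathcal F$, and the strict crossing at interior junctions, which manufactures the slack consumed by the two extra half-steps. I expect the real difficulty to be the bookkeeping: arranging the chart cover and partition so that they are adapted simultaneously to $\bar\gamma$, to $I^n_{\mathcal F}(z')$ and to $\sigma$ (in particular when the transverse path revisits a chart because of self-intersections), checking that the ``same swept leaves in each chart'' claim is not circular, and verifying that the local leaf-homotopies agree on overlaps so the global homotopy is well defined and avoids $\text{Sing}(\mathcal F)$; one must also establish the continuity statement \emph{(i)} itself from the definitions and the local model, which is a statement of the same nature.
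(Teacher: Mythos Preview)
The paper does not actually prove this lemma: it is quoted verbatim as Lemma~17 of \cite{Forcing} and used as a black box, with no argument supplied here. So there is no ``paper's own proof'' to compare against beyond the citation.

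That said, your plan is the standard one and is essentially how the result is established in the source. The decisive idea---that the two extra iterates in $I^{n+2}_{\mathcal F}(f^{-1}(z''))$ are exactly what is needed to turn the boundary leaves $\ell_{z'}$ and $\ell_{f^n(z')}$ into interior crossings, thereby absorbing the endpoint drift---is correctly identified, as is the reduction via trivialisation charts to comparing intervals of first coordinates. One refinement worth making when you write it out: rather than working chart-by-chart on $S$ and then worrying about whether the local leaf-homotopies glue (your own caveat), it is cleaner to lift everything to the universal cover $\text{Dom}(\mathcal F)^{\text{uni}}$, which is a disjoint union of planes with a non-singular foliation by Brouwer lines. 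There two $\mathcal F^{\text{uni}}$-transverse paths are $\mathcal F^{\text{uni}}$-equivalent if and only if they meet the same set of leaves, so the ``same swept leaves'' criterion becomes a genuine equivalence rather than a local one, and the gluing and self-intersection bookkeeping you flag simply disappears. Your point~(i), the existence of $C^0$-close transverse representatives for nearby points, is then immediate from the product structure in a single flow box around each $f^k(z)$.
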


The following result focuses on the torus case. 
\begin{lemma}[Lemma 4.8 of~\cite{Strictly_Toral}, essentially dating back to Section 10 of~\cite{LC2005}]\label{gradient-like} 
Suppose we are working with a maximal isotopy $I$ and the transverse foliation $\mathcal F$, which are lifted to 
$\widetilde I$ and $\widetilde {\mathcal F}$ in $\Bbb R^2$. 
Assume an $\widetilde {\mathcal F}$-transverse loop $\Sigma$ is written as a concatenation of finitely many transverse loops $\{\beta_i\}_{i=1}^p$, with a same base point,
which induce the homology directions $\{\beta_i^\ast\}_{i=1}^p$. 
Assume these directions $\{\beta_i^\ast\}_{i=1}^p$ 
can represent any element in the first homology group $H^1(\Bbb T^2,\Bbb Z)$,
 with all coefficients positive integers,
and assume $\sum_{i=1}^p \beta_i^\ast=0$. 
Then every leaf of $\widetilde {\mathcal F}$ is uniformly bounded. 
\end{lemma}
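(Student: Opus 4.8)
The plan is to prove that no leaf of $\widetilde{\mathcal F}$ can be unbounded; the uniform bound on the diameters of all leaves then follows from a standard compactness argument for $\Bbb Z^2$-periodic foliations (a sequence of leaves whose diameters tend to infinity, translated by deck transformations so as to meet a fixed fundamental domain, would subconverge to an unbounded leaf). So assume, for contradiction, that $\ell$ is an unbounded leaf of $\widetilde{\mathcal F}$, and let $\pi:\Bbb R^2\to\Bbb T^2$ denote the covering projection.

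First I set up a combinatorial skeleton. Since $\sum_{i=1}^p\beta_i^\ast=0$ and $H_1(\Bbb T^2,\Bbb Z)=\pi_1(\Bbb T^2)$, the loop $\Sigma$ is null-homotopic, hence lifts to an $\widetilde{\mathcal F}$-transverse loop $\widetilde\Sigma$ in $\Bbb R^2$, which is a \emph{compact} curve, and $\pi^{-1}(\Sigma)=\bigcup_{c\in\Bbb Z^2}(\widetilde\Sigma+c)$. Compactness of $\widetilde\Sigma$ implies that the winding-number function $z\mapsto\mathrm{wind}(\widetilde\Sigma,z)$ is bounded, say by $L$, and supported in a bounded set; consequently, for each $z\notin\pi^{-1}(\Sigma)$ only finitely many translates $\widetilde\Sigma+c$ satisfy $\mathrm{wind}(\widetilde\Sigma+c,z)\neq0$, and $\sum_{c\in\Bbb Z^2}|\mathrm{wind}(\widetilde\Sigma+c,z)|$ is finite, with a bound uniform in $z$. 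On the other hand, the hypothesis that the $\beta_i^\ast$ positively generate $\Bbb Z^2$ forces every connected component of $\Bbb T^2\setminus\Sigma$ to be inessential: an essential component would carry an essential loop disjoint from all the $\beta_i$, hence with vanishing algebraic intersection against every $\beta_i^\ast$, which is impossible because the $\beta_i^\ast$ span and the intersection form on $H_1(\Bbb T^2,\Bbb Z)$ is nondegenerate. Since inessential subsets of $\Bbb T^2$ have bounded lifts, $\Bbb R^2\setminus\pi^{-1}(\Sigma)$ is therefore a disjoint union of \emph{bounded} open cells; moreover each connected component of $\mathrm{Sing}(\widetilde{\mathcal F})=\pi^{-1}(\mathrm{Fix}(I))$, being connected and disjoint from $\Sigma$ (which lies in $\mathrm{Dom}(\mathcal F)$), is contained in a single cell.

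Now I analyze how $\ell$ crosses $\pi^{-1}(\Sigma)$. Since each $\widetilde\Sigma+c$ is transverse to $\widetilde{\mathcal F}$, the leaf $\ell$ meets it always with the same transverse orientation, so $t\mapsto\mathrm{wind}(\widetilde\Sigma+c,\ell(t))$ is monotone; being bounded, it has limits $W_\pm(c)$ as $t\to\pm\infty$, and $\ell$ meets $\widetilde\Sigma+c$ in exactly $|W_+(c)-W_-(c)|$ points. Examine an end of $\ell$, say $t\to+\infty$. A leaf is properly embedded in $\mathrm{Dom}(\mathcal F)=\Bbb R^2\setminus\mathrm{Sing}(\widetilde{\mathcal F})$, so either $\ell$ is unbounded along this end — in which case each time $\ell(t)$ leaves the bounded support of $\mathrm{wind}(\widetilde\Sigma+c,\cdot)$ the winding returns to $0$, and by monotonicity $W_+(c)=0$ for every $c$ — or $\ell$ is bounded along this end, in which case it accumulates only on $\mathrm{Sing}(\widetilde{\mathcal F})$, its limit set there is a connected compact subset of a single cell $V$, so $\ell(t)$ eventually stays in $V$, and hence $W_+(c)=\mathrm{wind}(\widetilde\Sigma+c,V)$ for every $c$, with $\sum_c|W_+(c)|<\infty$ by the bounded-support remark. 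The same dichotomy holds at $t\to-\infty$. If both ends are bounded, $\ell$ is the concatenation of a compact arc with two tails confined to cells, hence bounded — contradicting our assumption. Otherwise at least one end, say $t\to-\infty$, is unbounded, whence $W_-(c)=0$ for all $c$; then the total number of intersections of $\ell$ with $\pi^{-1}(\Sigma)$ is at most $\sum_c|W_+(c)-W_-(c)|=\sum_c|W_+(c)|<\infty$. So $\ell$ meets $\pi^{-1}(\Sigma)$ at only finitely many parameters, and therefore some sub-ray $\ell|_{(-\infty,t_1)}$ toward the unbounded end is connected and disjoint from $\pi^{-1}(\Sigma)$, hence lies in a single bounded cell — contradicting that this end of $\ell$ is unbounded. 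In every case we obtain a contradiction, so no leaf of $\widetilde{\mathcal F}$ is unbounded, and the lemma follows.

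The step I expect to be most delicate is the end-theoretic analysis of $\ell$ when $\mathrm{Fix}(I)$ is allowed to be an arbitrary closed set: one has to know that each end of a properly embedded leaf either escapes to infinity or is eventually confined to one bounded cell, and one has to arrange the winding-number bookkeeping so that it is genuinely finite. Both hypotheses are used essentially here — $\sum\beta_i^\ast=0$ makes $\widetilde\Sigma$ a compact loop, so its winding function has bounded support, while positive generation makes the complement of $\Sigma$ cellular with bounded cells and traps each singular component in a cell — and the argument is, in essence, that of Section 10 of \cite{LC2005} (see also Lemma 4.8 of \cite{Strictly_Toral}).
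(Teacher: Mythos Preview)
The paper does not prove this lemma; it is quoted from \cite{Strictly_Toral} and \cite{LC2005} without argument, so there is no in-paper proof to compare against. Your strategy --- lift $\Sigma$ to a compact $\widetilde{\mathcal F}$-transverse loop $\widetilde\Sigma$, show the cells of $\Bbb R^2\setminus\pi^{-1}(\Sigma)$ are bounded because the $\beta_i^\ast$ span $H_1(\Bbb T^2)$, and then control leaf crossings via monotonicity of winding numbers --- is essentially the classical one, and the overall skeleton is sound.

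There is one genuine misstep, precisely where you flagged the argument as delicate. You assert that a leaf is properly embedded in $\mathrm{Dom}(\widetilde{\mathcal F})$ and hence that a bounded end of $\ell$ accumulates only on $\mathrm{Sing}(\widetilde{\mathcal F})$. Neither claim is guaranteed for a $C^0$ singular foliation: nothing you have proved rules out Reeb-type behaviour in $\widetilde{\mathcal F}$, so the $\omega$-limit set of a bounded end could a priori be a closed leaf or a graphic containing entire regular leaves, not just singularities. Fortunately you do not actually need this. Two easy repairs, either sufficient:
\begin{itemize}
\item[(i)] If the positive end of $\ell$ lies in a compact set $K$, then only finitely many translates $\widetilde\Sigma+c$ have nonzero winding about any point of $K$, and for the remaining $c$ one has $W_+(c)=0$; hence $\sum_c|W_+(c)|<\infty$ immediately, without locating $\omega^+(\ell)$ at all.
\item[(ii)] Alternatively, show $\omega^+(\ell)\cap\pi^{-1}(\Sigma)=\emptyset$ directly: if $z\in\omega^+(\ell)$ lay on some $\widetilde\Sigma+c$, then in a foliation box about $z$ the leaf $\ell$ would return infinitely often and hence cross $\widetilde\Sigma+c$ infinitely often, contradicting the finite-crossing bound you already obtained from monotonicity plus bounded winding. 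Connectedness of $\omega^+(\ell)$ then places it in a single cell.
\end{itemize}
With either fix the remainder of your argument goes through unchanged. The final reduction from ``every leaf bounded'' to a uniform bound is standard for $\Bbb Z^2$-periodic foliations, though it deserves one line of care since a subsequential limit of basepoints might land in $\mathrm{Sing}(\widetilde{\mathcal F})$.
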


\subsection{\textcolor{black}{Forcing results}}
Consider an oriented non-singular foliation $\mathcal F$ of $\Bbb R^2$, 
and denote by $\phi_0,\phi_1$ and $\phi$ 
three distinct leaves of $\mathcal F$, parameterized according to their orientation, and satisfying that none of which separate the other two. We say \emph{$\phi_1$ is above $\phi_0$ relative to $\phi$ (and $\phi_0$ is below $\phi_1$ relative to $\phi$)}, if for some $t_0<t_1$, writing $z_0=\phi(t_0)$ and $z_1=\phi(t_1)$, 
there exist disjoint paths $\lambda_0$ and $\lambda_1$, 
such that for $i=0,1$, $\lambda_i$ joints $z_i$ to  $x_i\in \phi_i$, 
and moreover $\lambda_0$ and $\lambda_1$ do not intersect any of the three leaves except at the endpoints.  

For any $z\in \text{Dom}(\mathcal F)$, denote by $\phi_z$ the leaf in $\mathcal F$ which contains $z$.
Let 
$\gamma_0 : [a_0,b_0]\to \Bbb R^2$ and $\gamma_1:[a_1,b_1]\to \Bbb R^2$ 
be two $\mathcal F$-transverse paths. Suppose also $\gamma_0(t_0)=\gamma_1(t_1)$ belongs to some leaf $\phi\in \mathcal F$, for certain times $t_0\in (a_0,b_0)$ and $t_1\in(a_1,b_1)$. We say 
$\gamma_0\big |_{[a_0,b_0]}$ and $\gamma_1\mid_{[a_1,b_1]}$ intersect \emph{$\mathcal F$-transversely}
(at the leaf $\phi$), if 
$\phi_{\gamma_0(a_0)}$ is below $\phi_{\gamma_1(a_1)}$ relative to $\phi$, 
and 
$\phi_{\gamma_0}(b_0)$ is above $\phi_{\gamma_1(b_1)}$ relative to $\phi$.

More generally, suppose $\mathcal F$ is an oriented singular foliation on a surface $S$. Let again 
$\gamma_0: [a_0,b_0]\to \text{Dom}(\mathcal F),
\gamma_1:[a_1,b_1]\to \text{Dom}(\mathcal F)$ be $\mathcal{F}$-transverse paths, and assume 
there exists a leaf $\phi$ containing both $\gamma_0(t_0)$ and $\gamma_1(t_1)$ for some times $t_0\in(a_0,b_0)$ and $t_1\in (a_1,b_1)$. We say $\gamma_0$ and $\gamma_1$ intersect $\mathcal F$-transversely (at the leaf $\phi$) if, $\gamma_0$ and $\gamma_1$ can be lifted to 
the paths $\gamma_0^{\text{uni}}$, $\gamma_1^{\text{uni}}$ on the universal covering space $\text{Dom}(\mathcal F)^{\text{uni}}$ (see the paragraph after Lemma~\ref{transverse_foliation_thm}), such that $\gamma_0^{\text{uni}}(t_0)$ and  $\gamma_1^{\text{uni}}(t_1)$ belong to the same leaf $\phi^{\text{uni}}$, and such that, 
$\gamma_0^{\text{uni}}$ and  $\gamma_1^{\text{uni}}$ intersect $\mathcal F^{\text{uni}}$-transversely at the leaf $\phi^{\text{uni}}$. Note that it is possible for a path to have a $\mathcal F$-transverse intersection with itself by considering two distinct lifts of the path to $\text{Dom}(\mathcal F)^{\text{uni}}$. In this case we say that the path has a $\mathcal{F}$-transverse self-intersection.

The central result of \cite{Forcing}, 
often referred to as the \emph{Forcing Proposition}, is stated in the following lemma.
\begin{lemma}[Proposition 20 of~\cite{Forcing}]\label{Forcing_proposition} 
Let $f$ be a homeomorphism of a surface $S$ isotopic to the identity, $I$ a maximal isotopy for $f$, $\mathcal F$ a singular oriented foliation transverse to $I$. Let 
$\gamma_0:[a_0,b_0]\to \text{Dom}(\mathcal F)$ and $\gamma_1:[a_1,b_1]\to \text{Dom}(\mathcal F)$ be two $\mathcal F$-transverse paths and assume they intersect $\mathcal F$-transversely at $\phi= \phi_{\gamma_0(t_0)}$ and that $\gamma_0(t_0)=\gamma_1(t_1)$. If $n_0, n_1$ are positive integers such that $\gamma_0$ is admissible of order $n_0$ and 
$\gamma_1$ is admissible of order $n_1$, then the concatenations $\gamma_0\mid_{[a_0,t_0]} \gamma_1\mid_{[t_1,b_1]}$ and $\gamma_1 \mid_{[a_1,t_1]}\gamma_0\mid_{[t_0,b_0]}$ are both admissible of order $n_0+n_1$.
\end{lemma}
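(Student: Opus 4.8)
The plan is to follow the route of \cite{Forcing}: turn the statement into a purely planar Brouwer‑theoretic assertion on the universal covering $\text{Dom}(\mathcal F)^{\text{uni}}$, replace the abstract admissibility hypotheses by honest orbit segments, and then splice those segments across the distinguished leaf $\phi$. First I would pass to $\text{Dom}(\mathcal F)^{\text{uni}}$ (the covering described after Lemma~\ref{transverse_foliation_thm}), where $\mathcal F^{\text{uni}}$ is non‑singular, $f^{\text{uni}}$ restricted to each component is a Brouwer homeomorphism, and every leaf is an $f^{\text{uni}}$‑Brouwer line. By the definition of an $\mathcal F$‑transverse intersection, the paths $\gamma_0,\gamma_1$ lift to paths $\gamma_0^{\text{uni}},\gamma_1^{\text{uni}}$ meeting at a single point of a common leaf $\phi^{\text{uni}}$, with $\phi_{\gamma_0^{\text{uni}}(a_0)}$ below $\phi_{\gamma_1^{\text{uni}}(a_1)}$ and $\phi_{\gamma_0^{\text{uni}}(b_0)}$ above $\phi_{\gamma_1^{\text{uni}}(b_1)}$ relative to $\phi^{\text{uni}}$. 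Since admissibility is phrased entirely through $\mathcal F$‑equivalence with pieces of isotopy trajectories, proving the conclusion upstairs and projecting back down suffices.

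Next I would make the hypotheses concrete. By the definition of admissible of order $n_i$, fix $z_i\in\text{Dom}(I)$ and an $\mathcal F$‑equivalence carrying $\gamma_i$ onto an element of $I_{\mathcal F}^{n_i}(z_i)=\prod_{k=0}^{n_i-1}I_{\mathcal F}(f^k z_i)$; transporting $\phi$, the times $t_i$, and the above/below data along this equivalence, I may assume $\gamma_i$ is this concatenation and that the crossing point $\gamma_i(t_i)$ sits inside its $j_i$‑th factor $I_{\mathcal F}(f^{j_i}z_i)$. The goal becomes: exhibit a point $w$ whose transverse trajectory over $n_0+n_1$ consecutive iterates is $\mathcal F$‑equivalent to $\gamma_1\mid_{[a_1,t_1]}\gamma_0\mid_{[t_0,b_0]}$, and a symmetric point $w'$ for $\gamma_0\mid_{[a_0,t_0]}\gamma_1\mid_{[t_1,b_1]}$.

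The heart of the argument — and the step I expect to be the main obstacle — is the planar splicing. The leaf $\phi^{\text{uni}}$ separates its component of $\text{Dom}(\mathcal F)^{\text{uni}}$ into a left side and a right side. The hypothesis that $\phi_{\gamma_1^{\text{uni}}(a_1)}$ lies below and $\phi_{\gamma_0^{\text{uni}}(b_0)}$ lies above $\phi^{\text{uni}}$ is precisely what places the tail of $\gamma_1^{\text{uni}}\mid_{[a_1,t_1]}$ and the head of $\gamma_0^{\text{uni}}\mid_{[t_0,b_0]}$ on the correct sides of $\phi^{\text{uni}}$ to be joined near $\gamma_1(t_1)=\gamma_0(t_0)$. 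Using that $f^{\text{uni}}$ pushes every crossed leaf strictly to its left, one runs a Jordan‑curve and Brouwer‑plane‑translation argument to produce $w$: its backward iterates shadow the relevant tail of the orbit of $z_1$, its forward iterates shadow the relevant head of the orbit of $z_0$, and the splice is itself an $\mathcal F$‑transverse crossing at $\phi$. I would isolate this as an auxiliary proposition (it is essentially the local analysis in \cite{Forcing} of how two $\mathcal F$‑transverse paths meet at a leaf, combined with the Brouwer‑line structure), and, if needed, prove it by induction on $n_0+n_1$, using Lemma~\ref{lemma_of_continuity} for the limiting and continuity steps and Lemma~\ref{transverse_foliation_thm} to keep $\mathcal F$ transverse throughout the construction.

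Finally I would count pieces to pin down the order: the spliced trajectory uses the first $j_1$‑or‑so factors coming from $z_1$ together with the last $n_0-j_0$‑or‑so factors coming from $z_0$, and a careful bookkeeping — prolonging along the isotopy trajectory when the raw count falls short, which only alters the path outside the spliced portion — shows the concatenation is admissible of order exactly $n_0+n_1$; the point $w'$ is handled identically. Projecting $w$ and $w'$ back to $S$ then yields the statement.
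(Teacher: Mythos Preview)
The paper does not prove this lemma at all: it is simply quoted as Proposition~20 of~\cite{Forcing} and used as a black box throughout. So there is no ``paper's own proof'' to compare against; the authors defer entirely to the original source.

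Your proposal is, in outline, a reasonable sketch of the argument as it appears in~\cite{Forcing}: lift to $\text{Dom}(\mathcal F)^{\text{uni}}$, use that the lifted leaves are Brouwer lines for $f^{\text{uni}}$, and exploit the above/below configuration at $\phi^{\text{uni}}$ to splice the two orbit segments. That said, the crucial step---``one runs a Jordan-curve and Brouwer-plane-translation argument to produce $w$''---is exactly where all the content lies, and your sketch does not really engage with it. The actual argument in~\cite{Forcing} is a careful topological analysis using that certain leaves separate the plane and that $f^{\text{uni}}$ maps the right of each leaf into itself; it does not proceed by induction on $n_0+n_1$, and Lemma~\ref{lemma_of_continuity} plays no role. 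Your final ``bookkeeping'' paragraph about prolonging to hit order exactly $n_0+n_1$ is also more delicate than you suggest. So as a strategy the proposal points in the right direction, but the hard part is still a black box in your writeup, just as it is in the paper under review.
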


We will also need the following result, whose proof follows exactly that of Corollary 24 of ~\cite{Forcing}:
\begin{lemma}\label{shortening_paths}
 Let $\gamma:[a,b]\to \text{Dom}(\mathcal F)$ be an $n$-admissible $\mathcal{F}$-transverse path. Given $a\le a'<b'\le b$, there exists an $n$-admissible transverse path $\gamma  
'$ that is the same as $\gamma$ in $[a,a']$ and in $[b',b]$ and such that $\gamma'\mid_{[a',b']}$ has no $\mathcal{F}$-transverse self-intersection, and the image of $\gamma'\mid_{[a',b']}$ is contained in that of  $\gamma\mid_{[a',b']}$.
\end{lemma}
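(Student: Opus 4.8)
The plan is to deduce Lemma~\ref{shortening_paths} from Corollary~24 of~\cite{Forcing} by following that proof essentially verbatim, isolating the one place where it must be adapted. First I would recall the structure of the original argument: one considers the transverse path $\gamma\mid_{[a',b']}$ and asks whether it has a $\mathcal F$-transverse self-intersection. If it does not, there is nothing to prove — take $\gamma' = \gamma$. If it does, one picks a self-intersection, say witnessed by two times $s_1 < s_2$ in $[a',b']$ with $\gamma(s_1) = \gamma(s_2)$ lying on a common leaf $\phi$ (after lifting to $\text{Dom}(\mathcal F)^{\text{uni}}$, as in the definition of $\mathcal F$-transverse self-intersection), and one applies the Forcing Proposition (Lemma~\ref{Forcing_proposition}) with $\gamma_0 = \gamma_1 = \gamma$: the concatenation $\gamma\mid_{[a,s_1]}\,\gamma\mid_{[s_2,b]}$ is again admissible of the same order $n$, is equal to $\gamma$ outside $[a',b']$ since $s_1, s_2 \in [a',b']$, and its image on $[a',b']$ is strictly contained in that of the original (we have excised the loop $\gamma\mid_{[s_1,s_2]}$). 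Iterating this excision produces a decreasing sequence of admissible transverse paths.

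The second step is to argue that this iteration terminates (or converges) to a transverse path with \emph{no} $\mathcal F$-transverse self-intersection on $[a',b']$, while remaining $n$-admissible. In the original Corollary~24 this is handled by the standard observation that each excision strictly decreases a suitable complexity of the transverse path — for instance, the number of leaves met, or the combinatorial length relative to a fixed trivialization — so only finitely many excisions are possible; alternatively one passes to a limit using the fact that admissibility of order $n$ is a closed condition (via the finiteness of the set of $\mathcal F$-equivalence classes of order-$n$ transverse paths, which itself follows from Lemma~\ref{lemma_of_continuity} and compactness of $S$). I would invoke whichever of these is used in~\cite{Forcing}; since the statement explicitly says ``whose proof follows exactly that of Corollary 24'', the intent is clearly that no new idea is needed here and one simply transcribes that termination argument.

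The only genuine adaptation is bookkeeping about the endpoints: Corollary~24 as stated in~\cite{Forcing} presumably shortens the whole path, whereas here we must keep $\gamma'$ literally equal to $\gamma$ on the two end-segments $[a,a']$ and $[b',b]$ and only modify the middle. This is automatic from the excision mechanism, because every self-intersection we remove is witnessed by a pair of times inside $[a',b']$, so all excised sub-loops lie in $(a',b')$ and the restrictions to $[a,a']$ and $[b',b]$ are never touched; moreover each excision replaces $\gamma\mid_{[a',b']}$ by a subpath of itself, so the containment of images is preserved through the iteration. I would therefore state the proof as: ``Apply the argument of Corollary~24 of~\cite{Forcing} to the restriction $\gamma\mid_{[a',b']}$, using the Forcing Proposition to excise transverse self-intersections one at a time while preserving $n$-admissibility by Lemma~\ref{Forcing_proposition}; the process terminates for the same reason as there, and since every modification occurs strictly inside $[a',b']$ and only shrinks the image, the resulting path $\gamma'$ has the stated properties.''

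The main obstacle — really the only point requiring care rather than quotation — is verifying that the termination/limiting argument of Corollary~24 is robust to the restriction to a subinterval, i.e.\ that excising self-intersections of $\gamma\mid_{[a',b']}$ cannot be defeated by interaction with the frozen end-segments. Since a $\mathcal F$-transverse self-intersection of $\gamma\mid_{[a',b']}$ involves only times in $[a',b']$ and the Forcing Proposition applied with $\gamma_0=\gamma_1=\gamma$ returns a full admissible path of the same order, this interaction simply does not arise, and the obstacle dissolves; but it is the step one must actually check before writing ``follows exactly''.
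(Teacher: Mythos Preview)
Your overall strategy---iteratively excise transverse self-intersections inside $[a',b']$ until none remain---is exactly what Corollary~24 of~\cite{Forcing} does, and since the paper's entire proof is the single sentence ``whose proof follows exactly that of Corollary~24 of~\cite{Forcing}'', your approach and the paper's coincide.

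There is, however, one genuine slip in your reconstruction. You write that applying the Forcing Proposition (Lemma~\ref{Forcing_proposition}, i.e.\ Proposition~20 of~\cite{Forcing}) with $\gamma_0=\gamma_1=\gamma$ yields a concatenation ``admissible of the same order $n$''. It does not: Proposition~20 gives order $n_0+n_1$, so with $n_0=n_1=n$ you only get order $2n$, and after $k$ excisions you would have order $2^k n$. The result that actually preserves the order under excision of a \emph{self}-intersection is Proposition~23 of~\cite{Forcing}, which treats precisely the case $\gamma_0=\gamma_1$ and shows the shortened path $\gamma\mid_{[a,s_1]}\gamma\mid_{[s_2,b]}$ is admissible of order $\le n$; its proof is not a formal consequence of Proposition~20 but uses a direct argument on the realizing trajectory. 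Corollary~24 in~\cite{Forcing} is deduced from Proposition~23, not Proposition~20. Once you substitute the correct citation, the rest of your argument---including the observation that all modifications occur inside $[a',b']$ so the end-segments are untouched, and the termination step---goes through as you describe.
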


Consider a singular transverse foliation $\widetilde {\mathcal F}$ on $\Bbb R^2$, 
and fix an $\widetilde {\mathcal F}$-transverse  \textcolor{black}{simple} loop $\widetilde B$, whose natural lift is denoted by $\widetilde \beta$.
The set of leaves that are intersected by $\widetilde\beta$ is a topological open annulus, which we denote by $U_{\widetilde \beta}$.
Assume $\widetilde \gamma: J \to \text{Dom}(\widetilde {\mathcal F})$ is an $\widetilde {\mathcal F}$-transverse path. 
Following~\cite{Calvez_Tal_topological}, let us make the following useful definitions.

\begin{definition}[drawing and crossing loops for transverse paths] 
We say $\widetilde \gamma$ draws the loop $\widetilde B$ whose natural lift is $\widetilde \beta$, if there exist times $a < b$ such that 
$\widetilde \gamma\mid_{[a,b]}$ is $\widetilde {\mathcal F}$-equivalent to $\widetilde \beta\mid_{[t,t+1]}$ for some $t$. We say $\widetilde \gamma$ crosses $\widetilde B$, if there are times $a,b\in J$ such that 
$\widetilde \gamma(a)$ and $\widetilde \gamma(b)$ belong to two different connected components of the complement of $U_{\widetilde \beta}$, \textcolor{black}{at least one of which is bounded}. When $\widetilde \gamma: J\to \text{Dom}(\widetilde {\mathcal F})$ draws $\widetilde B$ (respectively, crosses $\widetilde B$), a connected component $I$ of the set $\{t\in J \mid \widetilde \gamma(t) \in U_{\widetilde \beta}\}$ is called a drawing component (respectively, crossing component) if $\widetilde \gamma \mid_I$ draws $\widetilde B$ (respectively, if $\widetilde \gamma \mid_{I}$ crosses $\widetilde B$).  
\end{definition}
The following lemma is a reformulation of Proposition 24 of~\cite{Calvez_Tal_topological}. 
\begin{lemma}\label{drawing_crossing_transverse}
Assume $S$ has genus $0$. Denote by $\gamma_0, \gamma_1$ two ${\mathcal F}$-transverse paths, and $ \beta$ an $ {\mathcal F}$-transverse loop.
Suppose a finite interval $J_0=[a_0,b_0]$ is such that $(a_0, b_0)$ is a drawing component of $ \gamma_0$,
and a finite interval $J_1=[a_1,b_1]$ is either a drawing component or a crossing component (eventually the same as $J_0$).
Assume also $\gamma_0(b_0)$ and $\gamma_1(a_1)$ belong to the same connected component of the complement of $U_{\beta}$.
Then $\gamma_0 \mid_{J_0}$ and $\gamma_1 \mid_{J_1}$
have an ${\mathcal F}$-transverse intersection.
\end{lemma}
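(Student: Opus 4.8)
The plan is to deduce this from Proposition~24 of \cite{Calvez_Tal_topological} — which is precisely the case in which $J_1$ is a drawing component — by extending its proof to cover also the case where $J_1$ is a crossing component; the whole argument takes place in the universal cover of the annulus $U_\beta$. Using the structural description of $U_\beta$ from \cite{Calvez_Tal_topological}, $U_\beta$ is an open annulus foliated by the restrictions to it of the leaves of $\mathcal F$ meeting $\beta$; let $p\colon\widehat U\to U_\beta$ be its universal covering, with $\widehat U\cong\Bbb R^2$ and deck translation $T$. Then the lifted foliation $\widehat{\mathcal F}$ is nonsingular with properly embedded separating leaves, and there is a continuous $T$-equivariant leaf coordinate $\theta\colon\widehat U\to\Bbb R$ (so $\theta\circ T=\theta+1$) which strictly increases along every $\widehat{\mathcal F}$-transverse path (such a path crosses each leaf it meets from its right to its left); in particular such lifts are embedded. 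Write $e^+,e^-$ for the two vertical ends of $\widehat U$, i.e. the $\alpha$- and $\omega$-limit directions of the leaves. Since $S$ has genus $0$, $\overline{U_\beta}$ separates $S$ into exactly the two pieces $N^+,N^-$ met by $\beta$, at least one of them inessential, with $N^\pm$ approached inside $U_\beta$ only through $e^\pm$.

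Next I would read the hypotheses in this model. Lift $\gamma_0\mid_{J_0}$ to $\widehat\gamma_0\colon(a_0,b_0)\to\widehat U$. Because $(a_0,b_0)$ is a drawing component, $\widehat\gamma_0$ contains a subpath $\widehat{\mathcal F}$-equivalent to a lift of $\beta\mid_{[t,t+1]}$, so, using strict monotonicity of $\theta$ and $\theta\circ T=\theta+1$, the two extremities of $\widehat\gamma_0$ converge to the vertical ends of $\widehat U$ at finite leaf-values $\theta_0^-\le\theta_0^+-1$. A drawing component of $\gamma_1$ behaves identically; if $J_1$ is a crossing component, the corresponding lift $\widehat\gamma_1\colon(a_1,b_1)\to\widehat U$ instead has its two extremities converging to the two \emph{distinct} ends $e^+$ and $e^-$. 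Finally, since $\gamma_0(b_0)$ and $\gamma_1(a_1)$ lie in the same component of $S\setminus U_\beta$, after applying a suitable power of $T$ to $\widehat\gamma_1$ the terminal extremity of $\widehat\gamma_0$ and the initial extremity of $\widehat\gamma_1$ converge to the same vertical end, at leaf-values I normalise to both equal $\theta_0^+$. (When $J_1=J_0$ one takes $\widehat\gamma_1=T\widehat\gamma_0$, so that the conclusion becomes an $\mathcal F$-transverse self-intersection of $\gamma_0\mid_{J_0}$.)

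The heart of the proof is then a planar Jordan-type separation in $\widehat U\cong\Bbb R^2$. Adjoin to the embedded arc $\widehat\gamma_0$ two disjoint $\widehat{\mathcal F}$-transverse rays running off to the two vertical ends approached by its extremities; this produces a properly embedded line $\Lambda_0$. Since $\theta_0^+-\theta_0^-\ge1$, one of the two complementary half-planes of $\Lambda_0$ contains a full $T$-period of a vertical end, and therefore separates the initial extremity of $\widehat\gamma_1$ (converging near $\theta_0^+$) from its terminal extremity — which escapes to the opposite vertical end in the crossing case, and to a leaf-value at distance $\ge1$ from $\theta_0^+$ in the drawing case. Hence $\widehat\gamma_1$ meets $\Lambda_0$; since $\theta$ is monotone along $\widehat\gamma_1$ and the adjoined rays point to ends that $\widehat\gamma_1$ is leaving, $\widehat\gamma_1$ cannot meet those rays, so it meets $\widehat\gamma_0$, at a point $q$ lying on some leaf $\widehat\phi$. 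The relative positions established above — the initial extremities of $\widehat\gamma_0,\widehat\gamma_1$ on one side of $\widehat\phi$ and their terminal extremities on the other side, in the ``below/above relative to a leaf'' sense of Section~\ref{notation_section} — show this crossing is an $\widehat{\mathcal F}$-transverse intersection. Projecting back to $S$ yields the asserted $\mathcal F$-transverse intersection of $\gamma_0\mid_{J_0}$ and $\gamma_1\mid_{J_1}$.

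I expect the main obstacle to be the bookkeeping at the vertical ends: showing that a drawing (resp. crossing) subpath converges to a specific vertical end at a well-defined finite leaf-value, and that the ``same complementary component'' hypothesis genuinely aligns the extremities of the two lifts — in effect, rerunning the structural part of the argument of \cite{Calvez_Tal_topological} with the weaker ``crossing'' hypothesis on $\gamma_1$ replacing ``drawing''. Given that, the separation step and the final below/above verification should be routine.
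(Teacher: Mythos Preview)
Your proposal is correct and follows essentially the same approach as the paper: the paper's entire proof is the single line ``The conclusion follows from the proof of Proposition 24 of \cite{Calvez_Tal_topological}'', and you likewise reduce to that proposition, spelling out in addition how its argument in the universal cover of $U_\beta$ adapts when $J_1$ is a crossing rather than a drawing component.
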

\begin{proof} The conclusion follows from the proof of Proposition 24 of~\cite{Calvez_Tal_topological}. 
\end{proof}

The following result inspired our main theorem, which was already described in the introduction. We say an $\mathcal F$-transverse path $\gamma:\Bbb R\to \text{Dom}(\mathcal F)$ is $\mathcal F$-recurrent if for any compact interval $J\subset \Bbb R$ and any 
$t\in \Bbb R$, there exists some interval $J'\subset [t,\infty)$, such that $\gamma\big |_{J'}$
and $\gamma\mid_{J}$ are $\mathcal F$-equivalent.
If it also satisfies that for any $t$ there is $J''\in(-\infty, t]$ such that $\gamma\big |_{J''}$ and $\gamma\mid_{J}$ are $\mathcal F$-equivalent, then we say $\gamma$ is $\mathcal F$-bi-recurrent. 

\begin{lemma}\label{three_transformations}[Proposition 43 of \cite{Forcing}]
Let $S$ be a surface, $f$ a homeomorphism of $S$ isotopic to the identity, $I$ a maximal isotopy for $f$ and $\mathcal{F}$ a singular transverse foliation for $I$. Let  $\widetilde S$ be the universal covering space of $S$, and let $\widetilde f, \widetilde I, \widetilde{\mathcal  F}$ be, respectively,  lifts of  $f, I, \mathcal F$ to $\widetilde S$.
Assume an $\widetilde{\mathcal F}$-transverse path 
$\widetilde \gamma: \Bbb R\to \text{Dom}(\widetilde{\mathcal F})$ 
is admissible and 
is $\widetilde {\mathcal F}$-bi-recurrent, 
and there are three distinct deck transformations $T_1,T_2,T_3$ and certain leaf $\widetilde \phi$ of $\widetilde {\mathcal F}$, such that $\widetilde \gamma$ intersects all three translates $T_i(\widetilde \phi)$, $i=1,2,3$. Then there exists non-contractible periodic orbits for $f$. 
\end{lemma}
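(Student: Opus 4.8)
The plan is to prove Lemma~\ref{three_transformations} by reducing to the situation of Lemma~\ref{gradient-like} after passing to a suitable intermediate cover, and by using the Forcing Proposition (Lemma~\ref{Forcing_proposition}) to manufacture admissible transverse loops from the recurrent path $\widetilde\gamma$. First I would observe that since $\widetilde\gamma$ meets the three translates $T_1\widetilde\phi,\,T_2\widetilde\phi,\,T_3\widetilde\phi$, and since $\widetilde\gamma$ is $\widetilde{\mathcal F}$-bi-recurrent, I can find times along $\widetilde\gamma$ at which the path returns arbitrarily often near each of these crossings. The key point is that bi-recurrence lets one ``cut and paste'': using Lemma~\ref{Forcing_proposition}, whenever a subpath of $\widetilde\gamma$ and a translate of another subpath of $\widetilde\gamma$ have an $\widetilde{\mathcal F}$-transverse intersection, the spliced path is again admissible, and recurrence guarantees the requisite transverse intersections actually occur (two copies of a bi-recurrent admissible path that together cross a common leaf must intersect $\widetilde{\mathcal F}$-transversely).

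The main construction I would carry out is to produce, from $\widetilde\gamma$ and the three leaf translates, an admissible $\widetilde{\mathcal F}$-transverse \emph{loop} in the quotient of $\widetilde S$ by the subgroup $G$ generated by the deck transformations $T_jT_i^{-1}$ — or, more precisely, loops whose homology directions generate the relevant homology with positive coefficients and sum to zero, exactly the hypothesis of Lemma~\ref{gradient-like}. Concretely: because $\widetilde\gamma$ returns close to the crossing with $T_i\widetilde\phi$ infinitely often on both ends, by Lemma~\ref{Forcing_proposition} I can build an admissible transverse path that is, up to $\widetilde{\mathcal F}$-equivalence, periodic under a nontrivial deck transformation $S_1$; doing this with a different pair of translates produces one periodic under an independent $S_2$; and combining all three translates yields the closing-up relation forcing the homology directions to sum to zero. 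If every leaf of the corresponding lifted foliation were bounded, Lemma~\ref{gradient-like} (applied in the surface $\widetilde S / G$, which has genus zero once one quotients appropriately, or after reducing to the torus case) would give a contradiction with the fact that $\widetilde\gamma$ traverses three distinct translates of a single leaf — so some leaf is unbounded, and the presence of such an admissible transverse loop, together with Lemma~\ref{Forcing_proposition} producing an admissible path of bounded order that closes up, yields a transverse loop that is admissible of order $n$ for some $n$; an admissible transverse loop of finite order corresponds to a periodic orbit of $f$, and the homology data show it is non-contractible.

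The hard part will be organizing the combinatorics of the splicings so that the final loop has both the correct homological behavior (generating homology with positive coefficients, summing to zero) and is genuinely admissible of some finite order $n$, rather than merely admissible as an infinite path. The delicate technical point is that a single transverse self-intersection of $\widetilde\gamma$ (coming from two lifts related by a deck transformation) only forces admissibility of the spliced loop, and one needs three such, in ``independent'' directions, to close up the loop homologically — this is precisely why the hypothesis asks for \emph{three} distinct translates rather than two. I would handle this by first using bi-recurrence to replace $\widetilde\gamma$ by an essentially periodic admissible model along each relevant direction (so that order stays bounded under concatenation), then invoking Lemma~\ref{shortening_paths} to control self-intersections, and finally checking that the resulting transverse loop cannot be such that all leaves of the associated foliation are bounded — for if they were, Lemma~\ref{gradient-like} would force boundedness incompatible with $\widetilde\gamma$ meeting three translates of $\widetilde\phi$. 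Once a finite-order admissible transverse loop with nonzero homology is in hand, standard arguments from the forcing theory (realizing admissible transverse loops by periodic orbits) complete the proof that $f$ has non-contractible periodic points.
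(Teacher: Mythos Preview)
The paper does not prove this lemma; it is quoted verbatim as Proposition~43 of \cite{Forcing} and used as a black box. So there is no ``paper's own proof'' to compare against here. That said, your proposal has genuine gaps that would prevent it from going through as written.

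The central problem is your reliance on Lemma~\ref{gradient-like}. That lemma is stated specifically for the torus (its hypothesis refers to generating $H^1(\Bbb T^2,\Bbb Z)$ with positive coefficients), whereas Lemma~\ref{three_transformations} is for an arbitrary surface $S$ of any genus, where the deck group need not be abelian. Your attempt to salvage this by passing to a quotient $\widetilde S/G$ with $G$ generated by the $T_jT_i^{-1}$ does not work: for a higher-genus surface these deck transformations need not commute, $G$ need not be normal, and the quotient is not a torus (nor even, in general, a manifold). More seriously, even in the torus case your logic is inverted: Lemma~\ref{gradient-like} \emph{concludes} that leaves are bounded from homological hypotheses on a transverse loop; it does not say that bounded leaves contradict a path meeting three translates of one leaf. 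There is no contradiction to extract there.

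The actual argument in \cite{Forcing} proceeds quite differently and is more elementary in structure. One assumes, for contradiction, that $\widetilde\gamma$ has no $\widetilde{\mathcal F}$-transverse intersection with any nontrivial translate $T\widetilde\gamma$ (otherwise Lemma~\ref{translate_intersection} finishes immediately). Under this assumption one analyzes the order in which a bi-recurrent path can visit translates of a single leaf: the absence of transverse intersections with translates forces a monotonicity that is incompatible with meeting three distinct translates and being bi-recurrent. The whole point is to reduce to Lemma~\ref{translate_intersection}, not to build loops satisfying Lemma~\ref{gradient-like}. Your sketch never isolates this reduction, and the ``closing up'' you describe (producing a finite-order admissible loop with prescribed homology from three splicings) is exactly the hard step you would need to justify---and it is not clear how to do so without already having a transverse intersection with a translate, which is the conclusion you want.
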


The previous lemma is dependent of the next one, which we also use in this work:
\begin{lemma}[Proposition 26 of~\cite{Forcing}]\label{translate_intersection}
Suppose an admissible path $\widetilde \gamma$ and a non-trivial deck translate of $\widetilde \gamma$ intersect $\widetilde {\mathcal F}$-transversely, then $f$ admits non-contractible periodic orbits.  
\end{lemma}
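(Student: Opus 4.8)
We only sketch the strategy (the statement is Proposition~26 of~\cite{Forcing}). The plan is to convert the transverse self-intersection into an \emph{essential} admissible transverse loop that still carries a transverse self-intersection, and then to realize a non-contractible periodic orbit from it. First I would put the data in normal form. A $\widetilde{\mathcal F}$-transverse intersection is witnessed on compact subpaths, so one may assume that $\widetilde\gamma\colon[a,b]\to\text{Dom}(\widetilde{\mathcal F})$ is a compact admissible transverse path --- hence, being $\widetilde{\mathcal F}$-equivalent to a subpath of a whole $\widetilde{\mathcal F}$-transverse trajectory, admissible of some finite order $n$ --- together with a non-trivial deck transformation $T$, times $t_0,t_1\in(a,b)$ with $\widetilde\gamma(t_0)=T\widetilde\gamma(t_1)$, and with the intersection of $\widetilde\gamma$ and $T\widetilde\gamma$ transverse at the leaf $\widetilde\phi=\phi_{\widetilde\gamma(t_0)}$.

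Next I would run the Forcing Proposition (Lemma~\ref{Forcing_proposition}) around a $T$-periodic loop. Since $T$ commutes with $\widetilde f$ and preserves $\widetilde{\mathcal F}$ together with the orientation of its leaves, every translate $T^{j}\widetilde\gamma$ is again admissible of order $n$, and $T^{j}\widetilde\gamma$ and $T^{j+1}\widetilde\gamma$ intersect $\widetilde{\mathcal F}$-transversely at $T^{j}\widetilde\phi$, at the point $T^{j}\widetilde\gamma(t_0)=T^{j+1}\widetilde\gamma(t_1)$. Applying Lemma~\ref{Forcing_proposition} at this crossing and iterating --- using Lemma~\ref{shortening_paths} to discard spurious self-intersections and the continuity statement Lemma~\ref{lemma_of_continuity} to keep control of the subinterval endpoints created by each surgery --- should give, for every $k\ge 1$, an admissible transverse path of order at most $Ck$ (for a fixed $C=C(n)$) that winds $k$ consecutive times along a fixed transverse arc, then its $T$-translate, then its $T^{2}$-translate, and so on. Projected to $S$, this path descends to $\beta^{k}$, where $\beta$ is an $\mathcal F$-transverse loop whose free homotopy class in $S$ is that of $T$; since $T\ne\text{Id}$, the loop $\beta$ is essential. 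Because the original crossing involves interior times of both $\widetilde\gamma$ and $T\widetilde\gamma$, one can also arrange (enlarging the relevant subintervals slightly, and applying Lemma~\ref{Forcing_proposition} once more if needed) that $\beta$ --- equivalently, the bi-infinite $T$-periodic admissible transverse path obtained as a limit of these windings --- still carries a $\widetilde{\mathcal F}$-transverse self-intersection.

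It then remains to realize a periodic orbit. At this point one has an essential admissible transverse loop $\beta$, of homotopy class $T$, possessing a $\widetilde{\mathcal F}$-transverse self-intersection. The mere existence of an essential admissible transverse loop does not force a periodic point (the irrational rigid translation of $\Bbb T^{2}$ is a counterexample), so the transverse self-intersection must be used in an essential way: it guarantees that, on the relevant covering, the region swept by the winding of $\beta$ contains its own iterate, and a Brouwer plane-translation type argument --- the realization mechanism of the forcing theory in~\cite{Forcing} --- then produces $\widetilde z\in\widetilde S$ together with integers $q\ge 1$ and $m\ge 1$ such that $\widetilde f^{\,q}(\widetilde z)=T^{m}\widetilde z$. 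Since $T\ne\text{Id}$ forces $T^{m}\ne\text{Id}$, the minimal period $q'$ of $\widetilde z$ divides $q$ and satisfies $\widetilde f^{\,q'}(\widetilde z)=S_{0}\widetilde z$ with $S_{0}\ne\text{Id}$, so $\pi_{S}(\widetilde z)$ lies on a non-contractible periodic orbit of $f$, which is what we want.

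I expect the main obstacle to be this last step: extracting an honest periodic orbit from the winding loop, and in particular pinning down exactly why the transverse self-intersection is indispensable (the loop alone is insufficient). The remaining difficulty is merely bookkeeping --- controlling the subinterval endpoints when iterating Lemma~\ref{Forcing_proposition} --- which is exactly what Lemmas~\ref{shortening_paths} and~\ref{lemma_of_continuity} are there for.
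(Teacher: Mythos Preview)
The paper does not prove this statement at all: it is quoted verbatim as Proposition~26 of~\cite{Forcing} and used as a black box, so there is no ``paper's own proof'' to compare your sketch against.

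Your outline does track the structure of the argument in~\cite{Forcing}: pass to compact admissible subpaths of finite order, iterate the Forcing Proposition along the chain $\widetilde\gamma,\,T\widetilde\gamma,\,T^2\widetilde\gamma,\ldots$ to produce, for every $k$, an admissible path of order $\le Ck$ that projects to $\beta^k$ for a fixed $\mathcal F$-transverse loop $\beta$ in the homotopy class of $T$; then invoke the realization machinery for such \emph{linearly admissible} loops to obtain a point $\widetilde z$ with $\widetilde f^{\,q}(\widetilde z)=T^{m}\widetilde z$, $m\ge 1$. Two remarks. First, Lemma~\ref{lemma_of_continuity} plays no role in this argument: the iteration of Lemma~\ref{Forcing_proposition} is purely combinatorial on transverse paths, and the admissibility orders add up directly with no need to appeal to continuity of transverse trajectories. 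Second, and more importantly, your step~(3)--(4) is where the real content lies, and your description there is too vague to stand on its own. In~\cite{Forcing} the passage from ``$\beta^k$ admissible of order $\le Ck$ and $\beta$ has a transverse self-intersection'' to an actual periodic orbit is a substantial piece of work (it occupies an entire subsection and uses a careful analysis of the annulus covering $\widetilde S/\langle T\rangle$, together with the full Brouwer line structure of the leaves). Saying ``the region swept by the winding of $\beta$ contains its own iterate, and a Brouwer plane-translation type argument then produces $\widetilde z$'' is a correct caricature, but it hides exactly the step you flag as the main obstacle. If you want a self-contained argument you will need to unpack that realization step, not defer it; otherwise, as the paper does, simply cite the result.
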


\subsection{Torus Rotation Set Theory and Bounded Deviations}
This subsection is specific for dynamics on the two torus $\Bbb T^2$. 
Given $\widetilde f\in \widetilde{\text{Homeo}}^+_0(\Bbb T^2)$ which is a lift of $f\in \text{Homeo}^+_0(\Bbb T^2)$, the Misiurewicz-Ziemann rotation set (as introduced in~\cite{MZ}) is defined as follows.
\begin{equation}\label{pp_rotation_vector}
\rho(\widetilde f):=\{\rho \big | \rho = \lim_{i\to \infty}\frac{1}{n_i}\big( \widetilde f^{n_i}(\widetilde z_i)-\widetilde z_i \big), \text{for some } \widetilde z_i\in \Bbb R^2 \text{ and some } n_i\to +\infty\}.
\end{equation}
One can also define a rotation vector for a fixed point $z\in \Bbb T^2$, 
\begin{equation}
\rho(\widetilde f,z):=\lim_{n\to \infty}\frac 1n \big( \widetilde f^n(\widetilde z)-\widetilde z\big),\, \widetilde z \in \pi^{-1}_{\Bbb T^2}(z),
\end{equation}
which is only well-defined when the limit exists. Note that in this definition, the limit does not depend on the choice of the lift $\widetilde z$ of $z$ when the limit exists. 
Moreover, we can denote $\mathcal M_f(\Bbb T^2)$ the set of $f$-invariant Borel 
probability measures on $\Bbb T^2$.
Also denote by $\mathcal M_{f,E}(\Bbb T^2) \subseteq \mathcal M_f(\Bbb T^2)$ the subset of ergodic measures. For the lift $\widetilde f$, consider the displacement function $\Delta_{\widetilde f}: \Bbb T^2 \to \Bbb R^2$, such that 
$\Delta_{\widetilde f}(z)=\widetilde f(\widetilde z)-\widetilde z$. Note again here the choice of the lift $\widetilde z$ does not affect this value, since $\widetilde f$ commutes with all deck transformations (i.e., integer translations). For any $\mu\in\mathcal M_f(\Bbb T^2)$, define $\rho_{\mu}(\widetilde f):=\int_{\Bbb T^2}\Delta_{\widetilde f}(z)d\mu(z)$. Then define $\rho_{\text{mes}}(\widetilde f):=\{\rho_{\mu}(\widetilde f) \mid \mu\in \mathcal M_f(\Bbb T^2)\}$. A rotation vector  $v\in \Bbb R^2$ is said to be realized by a measure $\mu$ if $\rho_{\mu}(\widetilde f)=v$. One has: 
\begin{lemma}[See~\cite{MZ}, especially Section 3]\label{MZ_rotation_properties} 
For any $\widetilde f\in \widetilde{\text{Homeo}}^+_0(\Bbb T^2)$, 
$\rho(\widetilde f)$ and $\rho_{\text{mes}}(\widetilde f)$ coincide, 
and it is always a compact and convex subset of $\Bbb R^2$. 
Moreover, the extremal points of $\rho(\widetilde f)$ are always realizable by some $\mu\in \mathcal M_f(\Bbb T^2)$.
\end{lemma}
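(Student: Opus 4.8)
\medskip

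The plan is to establish the two inclusions $\rho(\widetilde f)\subseteq \rho_{\text{mes}}(\widetilde f)$ and $\rho_{\text{mes}}(\widetilde f)\subseteq \rho(\widetilde f)$; once they are known, the compactness and convexity of $\rho(\widetilde f)$ are inherited from $\rho_{\text{mes}}(\widetilde f)$, and the realizability of extremal points is automatic from the resulting description $\rho(\widetilde f)=\{\rho_\mu(\widetilde f):\mu\in\mathcal M_f(\Bbb T^2)\}$. The elementary ingredient throughout is that the displacement $\Delta_{\widetilde f}:z\mapsto \widetilde f(\widetilde z)-\widetilde z$ is $\Bbb Z^2$-periodic, hence defines a continuous $\Bbb R^2$-valued map on the compact torus, and that for the empirical measure $\mu_{z,n}=\tfrac1n\sum_{k=0}^{n-1}\delta_{f^k(z)}$ one has $\int_{\Bbb T^2}\Delta_{\widetilde f}\,d\mu_{z,n}=\tfrac1n\bigl(\widetilde f^{\,n}(\widetilde z)-\widetilde z\bigr)$ by telescoping. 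In particular each set $\rho_n(\widetilde f):=\{\tfrac1n(\widetilde f^{\,n}(\widetilde z)-\widetilde z):\widetilde z\in\Bbb R^2\}$ lies in the (bounded) convex hull of $\Delta_{\widetilde f}(\Bbb T^2)$, and a diagonal extraction shows $\rho(\widetilde f)=\bigcap_{N\ge1}\overline{\bigcup_{n\ge N}\rho_n(\widetilde f)}$; this set is therefore bounded and closed, hence compact.

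For $\rho(\widetilde f)\subseteq\rho_{\text{mes}}(\widetilde f)$, write $v=\lim_i\tfrac1{n_i}(\widetilde f^{\,n_i}(\widetilde z_i)-\widetilde z_i)$ with $n_i\to\infty$, pass to a subsequence so that $\mu_{z_i,n_i}\to\mu$ in the weak-$*$ topology, and note that $\mu$ is $f$-invariant by the standard Krylov--Bogolyubov estimate (the signed measure $\mu_{z_i,n_i}-f_*\mu_{z_i,n_i}=\tfrac1{n_i}(\delta_{z_i}-\delta_{f^{n_i}(z_i)})$ tends to zero); since $\Delta_{\widetilde f}$ is continuous, $\rho_\mu(\widetilde f)=\int\Delta_{\widetilde f}\,d\mu=\lim_i\int\Delta_{\widetilde f}\,d\mu_{z_i,n_i}=v$. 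Next, $\mathcal M_f(\Bbb T^2)$ is convex and weak-$*$ compact and $\mu\mapsto\rho_\mu(\widetilde f)=\int\Delta_{\widetilde f}\,d\mu$ is affine and continuous, so $\rho_{\text{mes}}(\widetilde f)$ is compact and convex; moreover if $v$ is an extremal point of $\rho_{\text{mes}}(\widetilde f)$ then $\{\mu:\rho_\mu(\widetilde f)=v\}$ is a nonempty compact face of $\mathcal M_f(\Bbb T^2)$, hence by Krein--Milman contains an extremal point of $\mathcal M_f(\Bbb T^2)$, i.e. an ergodic measure. Finally, for $\nu\in\mathcal M_{f,E}(\Bbb T^2)$, Birkhoff's ergodic theorem applied to each coordinate of $\Delta_{\widetilde f}$ gives $\tfrac1n(\widetilde f^{\,n}(\widetilde z)-\widetilde z)\to\rho_\nu(\widetilde f)$ for $\nu$-a.e. $z$, so $\rho_\nu(\widetilde f)\in\rho(\widetilde f)$; in particular every extremal point of $\rho_{\text{mes}}(\widetilde f)$ lies in $\rho(\widetilde f)$.

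It remains to prove $\rho_{\text{mes}}(\widetilde f)\subseteq\rho(\widetilde f)$, which closes the circle: it yields $\rho(\widetilde f)=\rho_{\text{mes}}(\widetilde f)$, hence the stated compactness and convexity, and, since then $\rho(\widetilde f)=\{\rho_\mu(\widetilde f):\mu\in\mathcal M_f(\Bbb T^2)\}$, the realizability of extremal points by invariant measures. Because the convex hull of the ergodic measures is weak-$*$ dense in $\mathcal M_f(\Bbb T^2)$ and $\rho(\widetilde f)$ is closed, this inclusion reduces to the purely dynamical assertion that every convex combination $\sum_{i=1}^m t_i\,\rho_{\nu_i}(\widetilde f)$ of finitely many ergodic rotation vectors belongs to $\rho(\widetilde f)$. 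I expect this to be the main obstacle: the soft arguments above only show $\rho_{\text{mes}}(\widetilde f)=\overline{\mathrm{conv}}\,\rho(\widetilde f)$, and no topological argument of that flavour forces $\rho(\widetilde f)$ itself to be convex — one genuinely has to produce, for each prescribed combination, a single point whose displacement averages $\tfrac1n(\widetilde f^{\,n}(\widetilde z)-\widetilde z)$ converge to it. The strategy is to choose Birkhoff-generic points $z_i$ for the measures $\nu_i$ and, for large $n$, to follow an orbit that mimics the orbit of $z_1$ for about $t_1 n$ iterates, then the orbit of $z_2$ for about $t_2 n$ iterates, and so on (the weights $(t_i)$ ranging over the simplex and thereby sweeping out all convex combinations), the delicate step being to move, in a bounded number of iterates and with bounded extra displacement, from a neighbourhood of $\mathrm{supp}(\nu_i)$ to a neighbourhood of $\mathrm{supp}(\nu_{i+1})$, which is arranged by a chain/recurrence argument on $\Bbb T^2$. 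This is precisely the construction of Section~3 of~\cite{MZ}, which we invoke rather than reproduce.
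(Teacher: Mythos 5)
The paper does not actually prove this lemma: it is quoted verbatim from~\cite{MZ} as background, so there is no internal argument to compare yours with. Judged on its own terms, your soft reductions are correct and standard: the empirical-measure/Krylov--Bogolyubov argument gives $\rho(\widetilde f)\subseteq\rho_{\text{mes}}(\widetilde f)$; $\rho_{\text{mes}}(\widetilde f)$ is the image of the compact convex set $\mathcal M_f(\Bbb T^2)$ under the continuous affine map $\mu\mapsto\int\Delta_{\widetilde f}\,d\mu$, hence compact and convex; the face/Krein--Milman argument realizes its extreme points by ergodic measures; and Birkhoff puts every ergodic rotation vector inside $\rho(\widetilde f)$. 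You also correctly isolate the only genuinely hard point, namely convexity of $\rho(\widetilde f)$, i.e.\ the inclusion $\rho_{\text{mes}}(\widetilde f)\subseteq\rho(\widetilde f)$.

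The gap is in your description of that hard step, which is not the argument of~\cite{MZ} and would not work as stated. For a general homeomorphism of $\Bbb T^2$ there are simply no orbits travelling, in bounded time or at all, from a neighbourhood of $\mathrm{supp}(\nu_i)$ to a neighbourhood of $\mathrm{supp}(\nu_{i+1})$: consider $f(x,y)=(x+g(y),y)$, which leaves every circle $\{y=\mathrm{const}\}$ invariant; ergodic measures on distinct circles are mutually inaccessible, no chain/recurrence device connects them, and the intermediate rotation vectors are realized by other invariant circles rather than by any concatenated orbit. More structurally, a concatenation scheme of the kind you sketch never uses two-dimensionality, whereas convexity of the rotation set is a genuinely planar phenomenon (it can fail for homeomorphisms of $\Bbb T^d$ with $d\ge 3$); the actual proof in~\cite{MZ} is a topological argument in the plane (connectedness of the displacement sets $\rho_n(\widetilde f)$ together with a geometric lemma in $\Bbb R^2$, in their Section 2, combined with the measure-theoretic analysis of Section 3), not an orbit-shadowing construction. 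Deferring to~\cite{MZ} is consistent with how the paper itself treats the lemma, but if you cite the reference you should not attribute to it a mechanism it does not use and which, taken literally, fails; as a self-contained proof the proposal is therefore incomplete exactly at the one step that carries the content of the lemma.
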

A homeomorphism $f\in \text{Homeo}^+_0(\Bbb T^2)$ is called irrotational if $\rho(\widetilde f)=\{(0,0)\}$ for some lift $\widetilde f$ of $f$, which we call 
the the irrotational lift. 
$f\in \text{Homeo}^+_0(\Bbb T^2)$ is called \emph{Hamiltonian}
if it preserves a Borel probability measure $\mu$ with full support and no atoms, and $\rho_\mu(\widetilde f)=\{(0,0)\}$, for some lift $\widetilde f$ of $f$, which we call the \emph{Hamiltonian lift}. 
Now we can restate the the motivating result from \cite{Forcing}.
\begin{lemma}[Corollary I of~\cite{Forcing}]\label{hamiltonianlemma} Let $f\in \text{Homeo}^+_0(\Bbb T^2)$ be Hamiltonian with Hamiltonian lift $\widetilde f$. Assume its fixed point set is contained in a topological disk. Then either
$\widetilde f$ is a uniformly bounded lift, or $f$ admits non-contractible periodic orbits. 
\end{lemma}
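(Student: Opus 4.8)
This is Corollary~I of~\cite{Forcing}; with Theorem~\ref{non-wandering_Bounded_M} now available it can also be derived as follows. Suppose $f$ has no non-contractible periodic orbit; we must show that the Hamiltonian lift $\widetilde f$ is uniformly bounded. Since $\mathrm{Fix}(f)$ lies in a topological disk, Theorem~\ref{non-wandering_Bounded_M} yields $M>0$ with $\|\widetilde f^{\,n}(\widetilde z)-\widetilde z\|<M$ for every $\widetilde f$-non-wandering $\widetilde z$ and every integer $n$. As $\widetilde z\mapsto\widetilde f^{\,n}(\widetilde z)-\widetilde z$ is continuous on $\Bbb R^{2}$ for each fixed $n$, it suffices to exhibit a dense subset of $\Bbb T^{2}$ whose points lift to $\widetilde f$-non-wandering (even $\widetilde f$-recurrent) points: then $\|\widetilde f^{\,n}(\widetilde z)-\widetilde z\|\le M$ on a dense subset of $\Bbb R^{2}$, hence on all of it, for every $n$, which is uniform boundedness.

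Such a dense set is to be produced from the Hamiltonian measure $\mu$ (full support, no atoms, $\rho_{\mu}(\widetilde f)=0$). Write $\mu=\int\mu_{\omega}\,d\omega$ for the ergodic decomposition and $v_{\omega}:=\rho_{\mu_{\omega}}(\widetilde f)$, so $\int v_{\omega}\,d\omega=0$. The plan is first to show that, under our standing hypothesis, $v_{\omega}=0$ for $\mu$-a.e.\ $\omega$. This rests on a case analysis of the compact convex set $\rho(\widetilde f)$, which contains $0$ and all the $v_{\omega}$: a nonzero rational vector in $\rho(\widetilde f)$ (which occurs whenever $\rho(\widetilde f)$ has nonempty interior or is a segment of rational slope through $0$, and in particular whenever some $v_{\omega}$ is nonzero and rational) yields, by Franks' theorem on the realization of rational rotation vectors for area-preserving torus maps, a periodic point with that rotation vector, hence a non-contractible periodic orbit, contradiction; if $\rho(\widetilde f)$ is a segment of irrational slope with $0$ an endpoint, then $v_{\omega}\in\rho(\widetilde f)$ together with $\int v_{\omega}\,d\omega=0$ forces $v_{\omega}=0$ a.e.; and the remaining possibility, $\rho(\widetilde f)$ a segment of irrational slope with $0$ in its relative interior, is incompatible with the absence of non-contractible periodic orbits by the rigidity of such rotation sets (Franks--Misiurewicz type results for area-preserving maps), or else by the forcing argument below. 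Granting $v_{\omega}=0$ a.e., the displacement $\Delta_{\widetilde f}$ is bounded with zero $\mu_{\omega}$-mean, so a two-dimensional analogue of Atkinson's recurrence theorem---a bounded zero-mean $\Bbb R^{2}$-valued cocycle over an ergodic system is recurrent, i.e.\ $\widetilde f$ is conservative for the $\sigma$-finite lift of $\mu_{\omega}$ to $\Bbb R^{2}$---gives that $\mu_{\omega}$-a.e.\ $z$ lifts to an $\widetilde f$-recurrent point. Integrating over $\omega$, $\mu$-a.e.\ $z$ lifts to an $\widetilde f$-recurrent point; this set has full $\mu$-measure and $\mu$ has full support, so it is dense in $\Bbb T^{2}$, and the first paragraph finishes.

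The main obstacle is the step $v_{\omega}=0$ a.e., and above all the exclusion of a rotation set equal to a segment of irrational slope through the origin with $0$ in its relative interior---precisely the case in which the displacement cocycle may genuinely fail to be recurrent on a positive-measure set while furnishing no nonzero rational rotation vector for Franks' theorem. The robust way around this, and in fact the route taken in~\cite{Forcing} without recourse to Theorem~\ref{non-wandering_Bounded_M}, is to argue directly with transverse trajectories: using $\mu$ and Poincar\'e recurrence one builds an admissible, $\widetilde{\mathcal F}$-bi-recurrent transverse path, and one shows that if $\widetilde f$ were not uniformly bounded this path would have to meet three distinct deck translates of a single leaf of $\widetilde{\mathcal F}$; Lemma~\ref{three_transformations} (together with Lemma~\ref{translate_intersection}) then furnishes a non-contractible periodic orbit, the desired contradiction. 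We refer to~\cite{Forcing} for the details.
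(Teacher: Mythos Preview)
The paper does not prove this lemma; it is quoted verbatim as Corollary~I of~\cite{Forcing} and no argument is given. Your proposal correctly records this, and insofar as it ultimately defers to~\cite{Forcing} it is adequate.

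Your attempted alternative derivation via Theorem~\ref{non-wandering_Bounded_M}, however, has a genuine gap, and not where you think it is. The step you flag as the main obstacle---$\rho(\widetilde f)$ a segment of irrational slope with $0$ in its relative interior---is in fact ruled out unconditionally by Lemma~\ref{ThmC_Forcing}: such a segment contains exactly one rational point, and that point must be an endpoint. So your case analysis yielding $v_\omega=0$ a.e.\ goes through cleanly. The real problem is the next step: the assertion you call ``a two-dimensional analogue of Atkinson's recurrence theorem''---that a bounded zero-mean $\Bbb R^{2}$-valued cocycle over an ergodic system is recurrent---is false in general. Atkinson's theorem is specific to $\Bbb R$-valued cocycles; in dimension two and higher there are counterexamples even with bounded increments. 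So from $v_\omega=0$ a.e.\ you cannot conclude that $\mu$-a.e.\ $z$ lifts to an $\widetilde f$-recurrent (or even non-wandering) point by this route.

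A partial repair is available when $\rho(\widetilde f)$ is a nondegenerate segment: Lemma~\ref{Guilherme} gives a uniform bound on the perpendicular displacement, and one-dimensional Atkinson applied to $\mathrm{pr}_{(\alpha,\beta)}\circ\Delta_{\widetilde f}$ (zero $\mu_\omega$-mean) produces return times $n_k$ with $f^{n_k}(z)\to z$ and $\mathrm{pr}_{(\alpha,\beta)}(\widetilde f^{n_k}(\widetilde z)-\widetilde z)\to 0$; since the slope is irrational, the only lattice point in a thin enough strip around the line $\Bbb R(\alpha,\beta)$ within the perpendicular bound is $0$, forcing $\widetilde f^{n_k}(\widetilde z)\to\widetilde z$. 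But in the irrotational case $\rho(\widetilde f)=\{0\}$ no such reduction to one dimension is available, and one is essentially forced back to the forcing-theory argument of~\cite{Forcing} that you sketch in your final paragraph. I would either drop the alternative sketch or make explicit that it only covers the segment case and still relies on~\cite{Forcing} for the irrotational one.
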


Several realization results are recorded in the following lemma.
\begin{lemma}[Theorem 3.2 of~\cite{franks1989realizing}]\label{franks1989realizing}
Given $\widetilde f\in \widetilde{\text{Homeo}}^+_0(\Bbb T^2)$ and suppose $\rho(\widetilde f)$ has non-empty interior. Then every rational vector $v$ in the interior of $\rho(\widetilde f)$ is realizable by a periodic orbit, that is, if $v=(\frac {p_1}{q},\frac{p_2}{q})$ is written in irreducible form, there exists a periodic point $x$ with period $q$ such that $\widetilde f^q(\widetilde x)=\widetilde x+(p_1,p_2)$. In particular, if $v$ is not null, $x$ lies in a non-contractible periodic orbit. 
\end{lemma}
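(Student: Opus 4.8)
The plan is to reduce the realization statement to a fixed point problem and then apply Brouwer plane translation theory. Given the irreducible rational $v=(p_1/q,p_2/q)$ lying in the interior of $\rho(\widetilde f)$, I would introduce $g:=T_{-(p_1,p_2)}\circ \widetilde f^{\,q}$, where $T_w$ denotes translation by $w$; this $g$ is a lift of $f^q\in\text{Homeo}^+_0(\Bbb T^2)$. Using that rotation vectors are realized by invariant measures (Lemma~\ref{MZ_rotation_properties}) together with the Birkhoff ergodic theorem applied to the displacement cocycle $\Delta_g$, one checks $\rho(g)=q\,\rho(\widetilde f)-(p_1,p_2)$, so that $qv-(p_1,p_2)=(0,0)$ lies in the interior of $\rho(g)$. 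A fixed point $\widetilde x$ of $g$ satisfies $\widetilde f^{\,q}(\widetilde x)=\widetilde x+(p_1,p_2)$; its projection $x$ is $f$-periodic with period $m\mid q$, and a short divisibility argument using $\gcd(p_1,p_2,q)=1$ forces $m=q$, so $x$ lies on a non-contractible periodic orbit whenever $v\neq(0,0)$. Hence everything reduces to the claim: if $(0,0)$ is interior to $\rho(g)$, then $g$ has a fixed point.

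To prove this I would argue by contradiction, assuming $\text{Fix}(g)=\varnothing$. Then $g$ is an orientation-preserving fixed point free homeomorphism of the plane, and Brouwer's plane translation theorem applies: $g$ has no periodic points, free open disks (topological disks $D$ with $g(D)\cap D=\varnothing$) exist around every point, and, crucially, $g$ admits no \emph{periodic free disk chain}, i.e. no cyclic sequence of free disks $D_0,\dots,D_{\ell-1},D_\ell=D_0$ with positive integers $n_0,\dots,n_{\ell-1}$ satisfying $g^{n_i}(D_i)\cap D_{i+1}\neq\varnothing$ for each $i$ (this is Franks' free disk chain lemma). It therefore suffices to manufacture such a chain from the rotation-set hypothesis.

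Since $(0,0)$ is interior to the compact convex set $\rho(g)$, by Carathéodory's theorem and the extremal-realization part of Lemma~\ref{MZ_rotation_properties} I can pick $g$-invariant ergodic measures $\mu_1,\dots,\mu_k$ whose rotation vectors $v_1,\dots,v_k$ have $(0,0)$ in the interior of their convex hull, i.e. there are positive rationals $\lambda_j$ with $\sum_j\lambda_j v_j=(0,0)$. By Poincaré recurrence and the ergodic theorem, $\mu_j$-almost every point $\widetilde z_j$ is recurrent on the torus and satisfies $\tfrac1n\big(g^{\,n}(\widetilde z_j)-\widetilde z_j\big)\to v_j$. After clearing denominators and using recurrence to return arbitrarily close to each $z_j$, one selects long orbit pieces of $z_1$ (traversed $N_1$ times), then of $z_2$ ($N_2$ times), and so on around a cycle, with positive integers $N_j$ chosen so that the accumulated $\Bbb Z^2$-translation along the cycle is exactly $\sum_j N_j\big(g^{\,n_j}(\widetilde z_j)-\widetilde z_j\big)=(0,0)$ — this is precisely what $\sum_j\lambda_j v_j=0$ allows. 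Placing a sufficiently small free disk around each of the finitely many relevant orbit points and linking consecutive pieces via recurrence yields a genuine periodic free disk chain for $g$ in $\Bbb R^2$, contradicting Franks' lemma; hence $g$ has a fixed point and the proof concludes via the first paragraph.

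The main obstacle is the construction in the third paragraph: one must arrange the disk chain so that it \emph{closes up as a periodic chain in the plane}, not merely on the torus, which forces careful bookkeeping of the $\Bbb Z^2$-valued translation defects of the orbit pieces. This is exactly where the hypothesis that $v$ lies in the \emph{interior} of $\rho(\widetilde f)$ (rather than merely in $\rho(\widetilde f)$) is essential, together with the realizability of extremal rotation vectors by invariant measures from Lemma~\ref{MZ_rotation_properties}, which provides the recurrent base points needed to close the chain. A purely topological closing argument avoiding recurrence is possible but technically heavier.
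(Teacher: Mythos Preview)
The paper does not prove this lemma; it is quoted verbatim as Theorem~3.2 of Franks~\cite{franks1989realizing} and used as a black box, so there is no in-paper argument to compare against. Your outline is essentially a reconstruction of Franks' original proof: reduce to a fixed point for $g=T_{-(p_1,p_2)}\circ\widetilde f^{\,q}$, observe that $(0,0)\in\mathrm{int}\,\rho(g)$, and then invoke the Brouwer plane translation theorem together with Franks' periodic free disk chain lemma to derive a contradiction from $\mathrm{Fix}(g)=\varnothing$.

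The strategy is sound, and you have correctly identified the only genuine difficulty. In your third paragraph the sentence ``with positive integers $N_j$ chosen so that the accumulated $\Bbb Z^2$-translation along the cycle is exactly $\sum_j N_j\big(g^{n_j}(\widetilde z_j)-\widetilde z_j\big)=(0,0)$'' is where the real work hides: the actual displacements $g^{n_j}(\widetilde z_j)-\widetilde z_j$ are not integer vectors, only close to $n_j v_j$, and recurrence on $\Bbb T^2$ only gives you that they are close to some $w_j\in\Bbb Z^2$. One must argue that the resulting integer vectors $w_j$ still have $(0,0)$ in the interior of their positive cone (this is where ``interior'' is used a second time, to absorb the approximation error), and then solve $\sum_j N_j w_j=0$ in positive integers. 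Franks handles this carefully; your sketch is correct in spirit but would need these two sentences made precise to stand alone. The divisibility argument for the exact period $q$ is fine.
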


\begin{lemma}[Main result of~\cite{Franks2}\label{lemmafranks2}]Let $\widetilde f \in \widetilde{\text{Homeo}}^+_{0, \textrm{nw}}(\Bbb T^2)$ and $\rho(\widetilde f)$ is a line segment. If $v\in \rho(\widetilde f)$
is a rational vector, then it is realizable by a periodic orbit. In particular, if $\rho(\widetilde f)$ contains at least two distinct rational vectors, then $f$ admits non-contractible periodic orbits. 
\end{lemma}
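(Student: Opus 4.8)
The plan is to reduce the realization of an arbitrary rational vector in $\rho(\widetilde f)$ to producing a fixed point of a suitable lift, and there to derive a contradiction from Brouwer theory using the non-wandering hypothesis; the ``in particular'' clause is then a formality. For the reduction, write $v=(p_1/q,p_2/q)$ in lowest terms and let $T_w$ denote translation of $\Bbb R^2$ by $w$. A periodic orbit realizing $v$ is exactly an $f$-periodic point $x$ of period $q$ with $\widetilde f^{\,q}(\widetilde x)=\widetilde x+(p_1,p_2)$, that is, $\widetilde x\in\text{Fix}(\widetilde g)$ for $\widetilde g:=T_{(-p_1,-p_2)}\circ\widetilde f^{\,q}$, a lift of $g:=f^q$ with $\rho(\widetilde g)=q\,\rho(\widetilde f)-(p_1,p_2)$, which is a line segment through the origin; since $\gcd(p_1,p_2,q)=1$, any such $\widetilde x$ projects to a point of exact $f$-period $q$ whose orbit realizes $v$. (One should know that $g=f^q$ is again non-wandering; I treat this as a routine point, which can in any case be avoided by running the argument below directly for $\widetilde f$ along the rational direction $v$, closing up a chain after translation by $(p_1,p_2)$ over blocks of $q$ iterates.) It thus suffices to prove: if $g\in\text{Homeo}^+_{0,\text{nw}}(\Bbb T^2)$ has a lift $\widetilde g$ with $(0,0)\in\rho(\widetilde g)$, then $\widetilde g$ has a fixed point.

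Suppose it does not. Then $\widetilde g$ is a fixed-point-free orientation-preserving homeomorphism of $\Bbb R^2$, so every point has arbitrarily small free disks around it (open disks $U$ with $\widetilde g(U)\cap U=\emptyset$), and, by the Brouwer-type lemma of Franks, $\widetilde g$ admits no \emph{periodic free disk chain}: there are no free disks $U_0,\dots,U_m=U_0$ and positive integers $k_0,\dots,k_{m-1}$ with $\widetilde g^{\,k_j}(U_j)\cap U_{j+1}\ne\emptyset$. I would manufacture one. By Lemma~\ref{MZ_rotation_properties}, $(0,0)\in\rho(\widetilde g)=\rho_{\text{mes}}(\widetilde g)$ is the rotation vector of some $g$-invariant probability measure, so the displacement cocycle over $g$ has zero average against it; combining this with Poincar\'e recurrence and the non-wandering of $g$ on $\Bbb T^2$, an ergodic/combinatorial argument yields a point $z$ and times $n_k\to\infty$ for which the \emph{lifted} displacements $\widetilde g^{\,n_k}(\widetilde z)-\widetilde z$ tend to $(0,0)$ in $\Bbb R^2$. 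Fix a free disk $U_0$ of some positive radius around $\widetilde z$ and free disks $U_j$ around $\widetilde g^{\,j}(\widetilde z)$ for $1\le j\le n_k-1$ (possible by fixed-point-freeness and continuity); then $\widetilde g(U_j)\ni\widetilde g^{\,j+1}(\widetilde z)\in U_{j+1}$ for $0\le j\le n_k-2$, and for $k$ large $\widetilde g(U_{n_k-1})\ni\widetilde g^{\,n_k}(\widetilde z)\in U_0$ because $\widetilde g^{\,n_k}(\widetilde z)$ is within the radius of $U_0$ of $\widetilde z$; hence $U_0,U_1,\dots,U_{n_k-1},U_0$ is a periodic free disk chain, contradicting Franks' lemma. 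So $\widetilde g$ has a fixed point, which gives the realization statement. For the ``in particular'': a line segment containing two distinct rational vectors contains a nonzero one, $v$, and its realizing period-$k$ point $x$ satisfies $\widetilde f^{\,k}(\widetilde x)=\widetilde x+w$ with $w\ne(0,0)$, so no lift of $x$ is $\widetilde f$-periodic and $x$ lies on a non-contractible periodic orbit.

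The genuine obstacle is the production of the point $z$ and the times $n_k$. The non-wandering hypothesis alone gives recurrence of $g$-orbits only \emph{modulo} $\Bbb Z^2$: a lifted orbit returns near $\widetilde z+w$ for some integer vector $w$ that is typically nonzero, and a disk chain closing up after a translation by $w\ne(0,0)$ is \emph{not} forbidden by Brouwer's lemma. What forces $w=0$ — and is the real content, in contrast with the interior case of Lemma~\ref{franks1989realizing}, where $v$ lies in the interior of a two-dimensional rotation set and no recurrence is needed — is that $(0,0)$ genuinely belongs to the rotation set, i.e.\ is realized by an invariant measure; the delicate step is to combine this averaged information with the honest recurrence supplied by the non-wandering property so as to produce a \emph{single} orbit segment whose two endpoints, viewed in $\Bbb R^2$ rather than merely on $\Bbb T^2$, are arbitrarily close. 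A secondary, bookkeeping-level point is the power reduction in the first paragraph: either one checks directly that $f^q$ remains non-wandering, or one re-runs the disk-chain construction in the rational direction $v$, thereby never passing to a power.
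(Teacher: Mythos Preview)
The paper does not prove this lemma: it is quoted as the main result of \cite{Franks2} and used as a black box. So there is no in-paper proof to compare against, only Franks' original argument.

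Your overall architecture is precisely Franks': reduce to $v=(0,0)$ by passing to the lift $\widetilde g=T_{(-p_1,-p_2)}\circ\widetilde f^{\,q}$ of $g=f^q$, assume $\widetilde g$ is fixed-point free so that it is a Brouwer homeomorphism, and then manufacture a periodic free disk chain for $\widetilde g$, contradicting Franks' generalization of the Brouwer plane translation theorem. The ``in particular'' clause is indeed immediate once the realization is in hand.

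You also correctly locate the crux, and your final paragraph is candid that it is unfilled. Two comments on that gap. First, asking for a \emph{single} point $\widetilde z$ and times $n_k$ with $\widetilde g^{\,n_k}(\widetilde z)-\widetilde z\to 0$ in $\Bbb R^2$ is stronger than the disk-chain contradiction needs, and it is not obviously available from ``Poincar\'e recurrence plus zero rotation vector'': Atkinson's recurrence lemma is a one-dimensional statement, and an $\Bbb R^2$-valued cocycle with zero mean over an ergodic system need not be recurrent, so the hand-wave ``an ergodic/combinatorial argument yields\dots'' hides a genuine difficulty. Franks sidesteps this by exploiting \emph{chains} rather than single orbits: the non-wandering hypothesis makes every point of $\Bbb T^2$ chain recurrent, and one builds in $\Bbb R^2$ a closed $\varepsilon$-chain for $\widetilde g$ by concatenating several orbit segments whose net displacements can be arranged to cancel (this is where the line-segment shape of $\rho(\widetilde g)$ and the non-wandering property are used together), then thickens it to a periodic free disk chain. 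Second, the side issue you flag is real: powers of non-wandering homeomorphisms are not automatically non-wandering, so either one checks that $f^q$ is (which is not routine in general), or one runs the disk-chain argument directly for $\widetilde f$ in blocks of $q$ with a final translation by $(p_1,p_2)$, as you suggest.

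In short: the skeleton is right and matches the cited source, but the step you yourself single out is the whole proof, and the specific mechanism you propose for it (a single orbit closing up in $\Bbb R^2$) is not the one that works; the chain-recurrence route is what actually closes the argument.
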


Another relevant result, also from \cite{Forcing}, resolved one case of the famous Franks-Misiurevicz conjecture. 
\begin{lemma}[Theorem C of~\cite{Forcing}]\label{ThmC_Forcing} Let $\widetilde f\in \widetilde{\text{Homeo}}^+_0(\Bbb T^2)$. If 
the boundary of $\rho(\widetilde f)$ includes a line segment with irrational slope and contains exactly one rational point, then the rational point must be of the endpoints of this segment. 
\end{lemma}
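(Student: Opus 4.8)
\emph{Proof proposal.} The plan is to argue by contradiction, using the forcing theory to produce an extra rational point on the segment. Suppose the unique rational point $r$ of the irrational-slope segment $\Sigma\subset\partial\rho(\widetilde f)$ lies in the relative interior of $\Sigma$. Writing $r=(p_1,p_2)/q$ in lowest terms, replace $\widetilde f$ by the lift $\widetilde g:=T_{-(p_1,p_2)}\circ\widetilde f^{\,q}$ of $f^{q}$ (with $T_v(x)=x+v$); then $\rho(\widetilde g)=q(\rho(\widetilde f)-r)$, so $\partial\rho(\widetilde g)$ contains the irrational-slope segment $q(\Sigma-r)$, whose only rational point is $0$ and which has $0$ in its relative interior. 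As a segment of irrational slope carries at most one rational point, it is now enough to exhibit \emph{any} rational point of $q(\Sigma-r)$ distinct from $0$. Renaming $\widetilde g$ as $\widetilde f$, we have $\rho(\widetilde f)$ a non-degenerate compact convex set lying in a closed half-plane $H$ whose boundary line $L$ passes through $0$, has irrational slope, and satisfies $L\cap\rho(\widetilde f)=\Sigma$ with $0$ interior to $\Sigma$. Fix a maximal isotopy $I$ for $f$, a transverse foliation $\mathcal F$ (Lemma~\ref{transverse_foliation_thm}), and lift to $\widetilde f,\widetilde I,\widetilde{\mathcal F}$ on $\Bbb R^{2}$.

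Next I would realize the two directions along $L$ by transverse trajectories. Choose $w_+,w_-\in\Sigma$ on opposite sides of $0$; by Lemma~\ref{MZ_rotation_properties} there are ergodic measures $\mu_\pm$ with $\rho_{\mu_\pm}(\widetilde f)=w_\pm$, and (realization results of~\cite{Forcing}) for $\mu_\pm$-generic $z_\pm$ the transverse trajectory $I_{\mathcal F}^{\Bbb R}(z_\pm)$ is admissible and its projection $\gamma_\pm$ to $\Bbb T^2$ is an $\mathcal F$-bi-recurrent transverse path with homological drift $w_\pm$. Writing $w$ for a vector spanning $L$, the drifts $w_\pm$ are positive, respectively negative, multiples of $w$, so they point in opposite directions along the irrational-slope line $L$; in particular the lifts $\widetilde\gamma_\pm$ are \emph{not} $\widetilde{\mathcal F}$-bi-recurrent in $\Bbb R^2$. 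A forcing-theoretic linking argument (in the spirit of Lemmas~\ref{drawing_crossing_transverse} and~\ref{translate_intersection}) then shows that $\gamma_+$ and $\gamma_-$, after composing one of them with a suitable deck transformation, have an $\mathcal F$-transverse intersection.

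Given this intersection I would iterate the Forcing Proposition (Lemma~\ref{Forcing_proposition}), using Lemma~\ref{shortening_paths} to keep the pieces simple, splicing stretches of $\widetilde\gamma_+$ and of $\widetilde\gamma_-$ in a balanced alternating pattern. The two drifts then cancel, so the resulting admissible transverse path $\widetilde\delta$ has zero net drift and is hence $\widetilde{\mathcal F}$-bi-recurrent, while the alternation makes it oscillate with unbounded amplitude; thus $\widetilde\delta$ meets three distinct deck translates of some leaf of $\widetilde{\mathcal F}$, and Lemma~\ref{three_transformations} (or Lemma~\ref{translate_intersection}, via a transverse self-intersection) yields a non-contractible periodic orbit of $f$. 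Its rotation vector is a non-zero rational vector of $\rho(\widetilde f)$. When $\rho(\widetilde f)=\Sigma$ this is already a second rational point of $\Sigma$; in general one tunes the alternation ratio so that the homology class of the closed admissible transverse loop realized by $\widetilde\delta$ points along $L$, forcing the periodic orbit's rotation vector onto $L\cap\rho(\widetilde f)=\Sigma$. Either way this contradicts the uniqueness of the rational point of $\Sigma$, so $r$ must be an endpoint.

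The main obstacle is the quantitative control underlying the last two steps: ensuring that the balanced splice $\widetilde\delta$ really crosses three translates of a single leaf — this is precisely where the hypothesis that $0$ is \emph{interior} to $\Sigma$ (hence trajectories drifting in \emph{both} directions along $L$ exist) is essential — and, when $\rho(\widetilde f)$ has non-empty interior, tuning the construction so that the forced periodic orbit's rotation vector lands on the supporting line $L$ rather than merely in $\rho(\widetilde f)$. Keeping the displacement accumulated along these trajectories under control in the direction transverse to $L$, despite the slope of $L$ being irrational, is the delicate heart of the argument, and it is here that the hypothesis of a unique rational point on $\Sigma$ ultimately does its work.
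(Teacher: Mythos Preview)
This lemma is not proved in the present paper at all: it is simply quoted as Theorem~C of~\cite{Forcing}, so there is no ``paper's own proof'' to compare against. That said, your outline does not yet constitute a proof, and the gaps are not merely technical.

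The central problem is your step from ``balanced alternating splice'' to ``$\widetilde\delta$ is $\widetilde{\mathcal F}$-bi-recurrent''. Zero net drift in no way implies bi-recurrence in the plane: a path can have vanishing average displacement while still being highly non-recurrent (think of a path that oscillates with ever-increasing amplitude, which is exactly what you want it to do in order to hit three translates of a leaf). Lemma~\ref{three_transformations} genuinely requires bi-recurrence of the lifted path in $\widetilde S$, and nothing in your construction provides this. You cannot simultaneously arrange for $\widetilde\delta$ to be bi-recurrent in $\Bbb R^2$ \emph{and} to meet three distinct deck translates of a single leaf by splicing in the way you describe; these two desiderata are in tension, and resolving that tension is where the real work lies.

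The second substantial gap is the ``tuning'' step. Lemma~\ref{three_transformations} (or Lemma~\ref{translate_intersection}) only yields \emph{some} non-contractible periodic orbit, with no control over which homotopy class it lies in; its rotation vector is merely a nonzero rational point of $\rho(\widetilde f)$, not of $\Sigma$. When $\rho(\widetilde f)$ has nonempty interior this is vacuous. Your suggestion to adjust the alternation ratio so that the homology of the admissible loop points along $L$ runs into the basic obstruction that $L$ has irrational slope, so no integer homology class points along $L$. The actual argument in~\cite{Forcing} does not try to land a rational rotation vector on $\Sigma$; rather, it uses the forcing machinery to produce rotation vectors on \emph{both sides} of $L$, contradicting the assumption that $\Sigma$ lies on the boundary of $\rho(\widetilde f)$. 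That is a fundamentally different endgame from the one you sketch.
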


Recall that for any $v\in \Bbb R^2\backslash \{(0,0)\}$,  
$\text{pr}_v: w\mapsto \langle w,v\rangle/ \| v \|$ denotes the projection of 
a vector $w$ into the line passing through the origin in the direction $v$. 
We say $\widetilde f$ has \emph{bounded deviation 
(from its rotation set $\rho(\widetilde f)$)} 
along a direction $v\in \Bbb R^2 \backslash \{(0,0)\}$, 
if there exists a uniform constant $C>0$, 
such that for any $n\geq 1$, and for any lift $\widetilde x$ of $x$,
\begin{equation}
\text{pr}_v (\widetilde f^n(\widetilde x)-\widetilde x-n\rho(\widetilde f))<  C.
\end{equation}
We will be mainly interested in the case when $\rho(\widetilde f)$ is a line segment containing $(0,0)$ and some non-null vector $(\alpha,\beta)$. In this case we say that the rotation set has direction $(\alpha, \beta)$.


We next state several recent results on establishing bounded deviation under different contexts.
The first case we consider is when the rotation set has interior. 

\begin{lemma}[Theorem D of~\cite{Forcing}]\label{deviation_forcing}
Assume $\widetilde f \in \widetilde{\text{Homeo}}^+_0(\Bbb T^2)$ and 
$\rho(\widetilde f)$ has non-empty interior. Then $\widetilde f$ admits bounded deviation along every direction. 
\end{lemma}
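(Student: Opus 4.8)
The plan is to argue by contradiction, turning a trajectory that deviates too much into an admissible transverse path that realizes a rotation vector outside $\rho(\widetilde f)$. First I would reduce to the case $(0,0)\in\operatorname{int}\rho(\widetilde f)$: pick a rational $v=(p_{1}/q,p_{2}/q)$ in the interior of $\rho(\widetilde f)$ and set $\widetilde g:=\widetilde f^{q}\circ T^{-1}$, where $T$ is the deck transformation $x\mapsto x+(p_{1},p_{2})$; then $g=f^{q}$ and $\rho(\widetilde g)=q\,\rho(\widetilde f)-(p_{1},p_{2})$, which still has non-empty interior and now contains the origin in its interior. Since $z\mapsto\widetilde f(\widetilde z)-\widetilde z$ is bounded on $\mathbb R^{2}$, splitting a general iterate as $n=qm+r$ with $0\le r<q$ shows that $\widetilde f$ has bounded deviation along a direction $w$ if and only if $\widetilde g$ does; so assume henceforth $(0,0)\in\operatorname{int}\rho(\widetilde f)$.

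I would then fix a maximal isotopy $I$ for $f$ and a transverse foliation $\mathcal F$ (Lemma~\ref{transverse_foliation_thm}), lifted to $\widetilde I,\widetilde{\mathcal F}$ on $\mathbb R^{2}$, and choose finitely many rational vectors $v_{1},\dots,v_{k}$ in the interior of $\rho(\widetilde f)$, of common denominator $q$, chosen so that the integer vectors $h_{i}:=q v_{i}$ satisfy the hypotheses of Lemma~\ref{gradient-like} (possible since the origin is interior to $\rho(\widetilde f)$). By Lemma~\ref{franks1989realizing} each $v_{i}$ is realized by a periodic point $x_{i}$ with $\widetilde f^{q}(\widetilde x_{i})=\widetilde x_{i}+h_{i}$, so the whole transverse trajectory $I_{\mathcal F}^{\mathbb R}(x_{i})$ is admissible and projects to an $\mathcal F$-transverse loop $\Gamma_{i}\subset\mathbb T^{2}$ of homology class $h_{i}$, admissible of order $q$ when traversed once. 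Joining appropriate positive multiples of the $\Gamma_{i}$ by transverse arcs builds a transverse loop of the type in Lemma~\ref{gradient-like}, so that every leaf of $\widetilde{\mathcal F}$ has diameter at most some constant $D$.

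Now suppose $\widetilde f$ fails to have bounded deviation along a direction $w$, that is, there are $\widetilde z_{j}\in\mathbb R^{2}$ and $n_{j}\to\infty$ with $C_{j}:=\operatorname{pr}_{w}(\widetilde f^{n_{j}}(\widetilde z_{j})-\widetilde z_{j})-n_{j}\max_{\rho(\widetilde f)}\operatorname{pr}_{w}\to+\infty$. Fix $j$ with $C_{j}>1$, take an admissible transverse trajectory $\widetilde\gamma_{j}\in I^{n_{j}}_{\widetilde{\mathcal F}}(z_{j})$ — a path from $\widetilde z_{j}$ to $\widetilde f^{n_{j}}(\widetilde z_{j})$ — and let $e_{j}\in\mathbb Z^{2}$ be the lattice vector nearest $\widetilde f^{n_{j}}(\widetilde z_{j})-\widetilde z_{j}$. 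Being long, $\widetilde\gamma_{j}$ draws some $\widetilde{\mathcal F}$-transverse loop (here the periodic loops $\Gamma_{i}$ and the non-empty interior are used, via a compactness argument together with Lemma~\ref{drawing_crossing_transverse}); its translate $T_{e_{j}}\widetilde\gamma_{j}$ draws the same loop and starts near where $\widetilde\gamma_{j}$ ends, so Lemma~\ref{drawing_crossing_transverse} yields an $\mathcal F$-transverse intersection of $\widetilde\gamma_{j}$ with $T_{e_{j}}\widetilde\gamma_{j}$. Iterating the Forcing Proposition (Lemma~\ref{Forcing_proposition}), cleaning up superfluous self-intersections with Lemma~\ref{shortening_paths}, one obtains for every $k\ge1$ an admissible transverse path of order $k n_{j}$ from $\widetilde z_{j}$ to a point at distance $O(1)$, uniformly in $k$, from $\widetilde z_{j}+k e_{j}$. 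Such a path is $\widetilde{\mathcal F}$-equivalent to an element of $I^{k n_{j}}_{\widetilde{\mathcal F}}(\zeta_{k})$, so the diameter bound $D$ forces $\widetilde f^{k n_{j}}(\zeta_{k})-\zeta_{k}$ to stay within distance $O(1)$ (uniform in $k$) of $k e_{j}$; dividing by $k n_{j}$ and letting $k\to\infty$ exhibits $e_{j}/n_{j}$ as an element of $\rho(\widetilde f)$. But $\operatorname{pr}_{w}(e_{j}/n_{j})\ge\max_{\rho(\widetilde f)}\operatorname{pr}_{w}+(C_{j}-1)/n_{j}>\max_{\rho(\widetilde f)}\operatorname{pr}_{w}$, a contradiction. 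Hence bounded deviation holds along every direction.

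The main obstacle is the claim that the long admissible path $\widetilde\gamma_{j}$ draws an $\widetilde{\mathcal F}$-transverse loop positioned so as to force a transverse intersection with its translate $T_{e_{j}}\widetilde\gamma_{j}$: one must control in which complementary components of the relevant annuli the successive points of $\widetilde\gamma_{j}$ lie so that Lemma~\ref{drawing_crossing_transverse} applies, account for the interplay with the periodic loops $\Gamma_{i}$, and verify that the iteration of the Forcing Proposition keeps the displacement error uniform in $k$. This quantitative bookkeeping is exactly what the proof of Theorem~D in~\cite{Forcing} provides, and I would ultimately invoke it.
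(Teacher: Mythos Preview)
The paper does not prove this lemma at all: it is stated as a citation of Theorem~D of~\cite{Forcing} and no argument is given. So there is no ``paper's own proof'' to compare your attempt against. Your proposal outlines the strategy behind that cited theorem and then, in its last sentence, explicitly defers to~\cite{Forcing} for the quantitative bookkeeping; in effect you are doing exactly what the paper does, namely invoking the reference, just with more surrounding commentary.

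Your sketch is broadly faithful to the actual argument in~\cite{Forcing}: the reduction to $(0,0)\in\operatorname{int}\rho(\widetilde f)$, the use of realized periodic orbits to produce transverse loops and conclude via Lemma~\ref{gradient-like} that leaves of $\widetilde{\mathcal F}$ are uniformly bounded, and the idea of forcing a long admissible path to close up via a transverse intersection with a translate are all correct ingredients. The step you flag as the main obstacle---producing and correctly positioning the transverse intersection of $\widetilde\gamma_j$ with $T_{e_j}\widetilde\gamma_j$ so that the Forcing Proposition can be iterated with uniform error---is indeed the heart of the matter, and as you yourself note it is not something your outline actually carries out. So your proposal is not an independent proof; it is an annotated citation, which matches what the paper provides.
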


The following two results focus on perpendicular directions to rotation sets which are line segments.

\begin{lemma}[Theorem A of~\cite{davalos2018annular}]\label{davalos2018annular}
Suppose $\widetilde f \in \widetilde{\text{Homeo}}^+_0(\Bbb T^2)$ 
and $\rho(\widetilde f)$ is a line segment with rational slope, containing $(0,0)$ and with direction $v$. 
Then $\widetilde f$ admits bounded deviation along both perpendicular directions $v^{\perp}$ and $-v^{\perp}$.
\end{lemma}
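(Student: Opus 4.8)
The plan is to reduce, by a change of basis on the torus, to the case of a horizontal rotation set, and then to argue by contradiction using the Le Calvez transverse foliation and the forcing theory. Since $\rho(\widetilde f)$ is a segment through the origin of rational slope, choose $A\in\mathrm{SL}(2,\Bbb Z)$ sending the direction $v$ to a positive multiple of $(1,0)$. Then $\widetilde g:=A\widetilde fA^{-1}$ is again in $\widetilde{\mathrm{Homeo}}^+_0(\Bbb T^2)$ (it commutes with $\Bbb Z^2$ and preserves orientation), and $\rho(\widetilde g)=A\rho(\widetilde f)=[a,b]\times\{0\}$ is a non-degenerate horizontal segment ($a<b$, $a\le0\le b$). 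Since $\langle(A^{T})^{-1}v^{\perp},Av\rangle=\langle v^{\perp},v\rangle=0$, the vector $(A^{T})^{-1}v^{\perp}$ is a multiple of $(0,1)$, and the displacement identity $\widetilde g^{\,n}(Az)-Az=A(\widetilde f^{\,n}(z)-z)$ turns bounded deviation for $\widetilde f$ along $v^{\perp}$ and $-v^{\perp}$ into bounded deviation for $\widetilde g$ along $(0,1)$ and $(0,-1)$. As $\mathrm{pr}_{(0,1)}(\rho(\widetilde g))=\{0\}$, the latter is equivalent to a uniform estimate $|\pi_2(\widetilde g^{\,n}(\widetilde z)-\widetilde z)|<C$ for all $n\ge1$ and all $\widetilde z$, where $\pi_2$ is the second coordinate, and it is enough to bound $\pi_2(\widetilde g^{\,n}(\widetilde z)-\widetilde z)$ from above, the lower bound for $\widetilde g$ being the upper bound for $\widetilde g^{-1}$, whose rotation set $-\rho(\widetilde g)$ is again horizontal. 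Renaming, I assume $\rho(\widetilde f)=[a,b]\times\{0\}$ with $a<b$ and aim for $\sup_{n\ge1,\,\widetilde z}\pi_2(\widetilde f^{\,n}(\widetilde z)-\widetilde z)<\infty$.

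Suppose not. Fix a maximal isotopy $I$ for $f$ and a transverse foliation $\mathcal F$ as in Lemma~\ref{transverse_foliation_thm}, lifted to $\widetilde I,\widetilde{\mathcal F}$ on $\Bbb R^2$ (if $f$ has no fixed point, $\mathrm{Fix}(I)=\emptyset$ and $\mathcal F$ is non-singular, which is harmless). The technical core — of the same nature as Proposition~\ref{unbounded_trajectory} — is to convert arbitrarily large upward displacements of $\widetilde f$ into a single admissible $\widetilde{\mathcal F}$-transverse trajectory that escapes vertically. Concretely, using Lemma~\ref{lemma_of_continuity} to pass from iterates of $f$ to admissible transverse trajectories, and the compactness of $\Bbb T^2$ to extract a limiting pattern, I would obtain an admissible, $\mathcal F$-bi-recurrent transverse trajectory $\gamma$ on $\Bbb T^2$ that \emph{draws} an $\mathcal F$-transverse loop $\beta$ whose homology class $(p,q)\in H_1(\Bbb T^2;\Bbb Z)$ has $q\neq0$, and draws it along arbitrarily many consecutive drawing components — equivalently, the lift of $\gamma$ to the annular covering $\Bbb R^2/\langle(x,y)\mapsto(x+1,y)\rangle$ has second coordinate unbounded above, so that a lift $\widetilde\gamma$ to $\Bbb R^2$ meets the translates $\widetilde\phi,\,T_0\widetilde\phi,\,T_0^{2}\widetilde\phi,\dots$ of a single leaf $\widetilde\phi$ of $\widetilde{\mathcal F}$ under the deck transformation $T_0:(x,y)\mapsto(x+p,y+q)$, in particular three distinct such translates.

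Finally I feed this into the forcing machinery. Since $\gamma$ is admissible and bi-recurrent and $\widetilde\gamma$ meets three translates of one leaf, Lemma~\ref{three_transformations} already produces a non-contractible periodic orbit for $f$; but more is true, and more is needed, because a non-contractible periodic orbit could a priori have a purely horizontal rotation vector $(p_1/q,0)\in\rho(\widetilde f)$. Following the loop $\beta$ (with $q\neq0$) through the forcing and drawing/crossing apparatus — Lemmas~\ref{Forcing_proposition}, \ref{drawing_crossing_transverse} (working in a suitable annular cover so that the genus-$0$ tools apply) and \ref{translate_intersection} — rather than merely invoking existence, one should be able to produce a non-contractible periodic orbit whose transverse trajectory over one period is $\mathcal F$-equivalent to a power of $\beta$, hence with $\widetilde f^{\,m}(\widetilde y)=\widetilde y+k(p,q)$ for some $m,k\ge1$ and rotation vector $\tfrac km(p,q)$, which has non-zero second coordinate. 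Since the rotation vector of any periodic orbit lies in $\rho(\widetilde f)$ directly from the definition of the rotation set, this contradicts $\rho(\widetilde f)\subseteq\Bbb R\times\{0\}$. Applying the whole argument to $\widetilde f^{-1}$ rules out unbounded \emph{downward} displacement as well, and the theorem follows. The step I expect to be genuinely delicate is the middle paragraph: manufacturing the escaping bi-recurrent transverse trajectory from nothing more than unbounded vertical deviation — with no non-wandering hypothesis to lean on and no a priori control on the horizontal coordinate — and guaranteeing that the transverse loop it draws carries non-zero vertical homology, so that the periodic orbit ultimately forced cannot have a purely horizontal rotation vector.
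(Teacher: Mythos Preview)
This lemma is not proved in the paper at all: it is simply quoted as Theorem~A of D\'avalos, with no argument given. So there is no in-paper proof to compare your proposal against.

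As for the proposal itself, the reduction by $\mathrm{SL}(2,\Bbb Z)$ and the final contradiction (a periodic orbit with rotation vector off the horizontal axis) are fine in principle, but the middle step --- which you yourself flag as ``genuinely delicate'' --- is a real gap, not just a technicality. From unbounded vertical displacement alone, with no non-wandering hypothesis and no control on the horizontal coordinate, you need to manufacture an admissible \emph{bi-recurrent} transverse trajectory that draws a loop with nonzero vertical homology. Compactness of $\Bbb T^2$ and Lemma~\ref{lemma_of_continuity} let you find accumulation points of long orbit segments, but they do not by themselves yield bi-recurrence together with the required crossing pattern; the arguments in this paper that achieve something analogous (Proposition~\ref{unbounded_trajectory}, Theorem~\ref{non-wandering_Bounded_M}) lean essentially on the non-wandering hypothesis, and the Baire-category trick there needs the map to return. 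Moreover, even granting the trajectory, Lemma~\ref{three_transformations} produces \emph{some} non-contractible periodic orbit, and you would still have to justify that its rotation vector has nonzero second coordinate --- this is true in the proof of Proposition~43 of \cite{Forcing}, but it is an extra argument, not the statement as quoted here.

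For context, D\'avalos's own proof proceeds quite differently: rather than forcing theory, it shows that such an $f$ is \emph{annular} (admits a lift to a suitable annular cover with bounded vertical displacement), using Atkinson's lemma on cocycles together with Brouwer-theoretic arguments. So even if your outline could be completed, it would be a genuinely different route from the one in the cited reference.
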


\begin{lemma}[Theorem A of~\cite{Guilherme_Thesis}]\label{Guilherme}
For some $\widetilde f \in \widetilde{\text{Homeo}}^+_0(\Bbb T^2)$, assume $\rho(\widetilde f)$ is a 
line segment from $(0,0)$ to some vector $v=(\alpha,\beta)$, where $\alpha/\beta$ is irrational, then $\widetilde f$ admits 
bounded deviation along the perpendicular directions $v^{\perp}$ and $-v^{\perp}$ .
\end{lemma}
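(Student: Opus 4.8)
The plan is to argue by contradiction using the forcing theory of transverse trajectories. We fix a maximal isotopy $I=(f_t)_{t\in[0,1]}$ for $f$ and a transverse singular foliation $\mathcal F$ for $I$ (Lemma~\ref{transverse_foliation_thm}), with lifts $\widetilde I$ and $\widetilde{\mathcal F}$ to $\Bbb R^2$. First we record a free consequence of the hypotheses: since $\alpha/\beta$ is irrational, the segment $\rho(\widetilde f)=[(0,0),(\alpha,\beta)]$ contains no rational vector other than $(0,0)$, so $f$ has no non-contractible periodic orbit (such an orbit would force a nonzero rational vector into $\rho(\widetilde f)$). Consequently, in everything below, \emph{producing a non-contractible periodic orbit for $f$ is a contradiction}. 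We also note that the estimate we want is a genuine strengthening from ``sublinear'' to ``bounded'': writing $v=(\alpha,\beta)$, $v^{\perp}=(-\beta,\alpha)$ and $\psi(z)=\langle\Delta_{\widetilde f}(z),v^{\perp}\rangle$, every invariant measure $\mu$ satisfies $\int\psi\,d\mu=\langle\rho_{\mu}(\widetilde f),v^{\perp}\rangle=0$ because $\rho_{\mu}(\widetilde f)\in\Bbb R v$; a standard compactness argument on empirical measures then gives $\tfrac1n\langle\widetilde f^{n}(\widetilde x)-\widetilde x,v^{\perp}\rangle\to 0$ uniformly in $\widetilde x$, which is strictly weaker than the asserted boundedness, so the two-dimensional topology must intervene.

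Suppose then, for a contradiction, that bounded deviation fails along $v^{\perp}$; the case of $-v^{\perp}$ is symmetric, and both follow once $|\langle\widetilde f^{n}(\widetilde x)-\widetilde x,v^{\perp}\rangle|$ is shown to be bounded. There are then points $\widetilde z_k\in\Bbb R^2$ and integers $n_k\ge 1$ with $\langle\widetilde f^{n_k}(\widetilde z_k)-\widetilde z_k,v^{\perp}\rangle\to+\infty$; composing with deck transformations we may take all $\widetilde z_k$ in a fixed fundamental domain. To each $z_k$ we attach an admissible transverse representative $\widetilde\Gamma_k\in\widetilde I_{\mathcal F}^{\,n_k}(z_k)$, of order $n_k$; since $\Bbb T^2$ is compact, $\widetilde\Gamma_k$ stays within uniformly bounded Hausdorff distance of the isotopy path $\widetilde I^{[0,n_k]}(\widetilde z_k)$, so its extent in the $v^{\perp}$ direction tends to $+\infty$. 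On the recurrent side, since $v$ is an extreme point of $\rho(\widetilde f)$ it is realized by an ergodic measure $\mu$ (Lemma~\ref{MZ_rotation_properties}); by Poincaré recurrence, $\mu$-almost every point $z$ is recurrent for $f$, so its whole transverse trajectory $\widetilde\Gamma_{\mu}:=\widetilde I_{\mathcal F}^{\,\Bbb R}(z)$ is an admissible, $\widetilde{\mathcal F}$-bi-recurrent transverse path (and similarly using an ergodic measure realizing $(0,0)$).

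The heart of the argument — a technical step in the spirit of Proposition~\ref{unbounded_trajectory} — is to convert the unboundedness witnessed by the (possibly wandering) paths $\widetilde\Gamma_k$ into \emph{recurrent} data to which Lemmas~\ref{three_transformations} and~\ref{translate_intersection} apply. The mechanism we would use: a path $\widetilde\Gamma_k$ of huge $v^{\perp}$-extent must cross essential transverse loops that $\widetilde\Gamma_{\mu}$ also draws (here one invokes the continuity Lemma~\ref{lemma_of_continuity}, the shortening Lemma~\ref{shortening_paths} to discard inessential self-intersections, and the fact that $\Bbb Z^2$-translates are $v^{\perp}$-dense, since $\langle e_1,v^{\perp}\rangle=-\beta$ and $\langle e_2,v^{\perp}\rangle=\alpha$ are nonzero and irrationally related); hence for $k$ large $\widetilde\Gamma_k$ has an $\widetilde{\mathcal F}$-transverse intersection with a deck translate of $\widetilde\Gamma_{\mu}$ — or, failing that, with a non-trivial deck translate of itself. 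Splicing along this intersection through the Forcing Proposition (Lemma~\ref{Forcing_proposition}) manufactures an admissible, bi-recurrent transverse path meeting three distinct $\Bbb Z^2$-translates $T_1\widetilde\phi,T_2\widetilde\phi,T_3\widetilde\phi$ of a single leaf $\widetilde\phi$ of $\widetilde{\mathcal F}$, the three being forced apart precisely because the spliced-in copy of $\widetilde\Gamma_k$ spans a large $v^{\perp}$-range. Lemma~\ref{three_transformations} then yields a non-contractible periodic orbit for $f$ — a contradiction. (If instead the self-intersection route applies, Lemma~\ref{translate_intersection} gives the contradiction directly; and if the splicing produces a transverse trajectory with linear drift having a nonzero $v^{\perp}$-component, that already contradicts $\rho(\widetilde f)\subset\Bbb R v$.)

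This gives bounded deviation along $v^{\perp}$; rerunning the argument from $\langle\widetilde f^{n_k}(\widetilde z_k)-\widetilde z_k,v^{\perp}\rangle\to-\infty$ gives the bound on the other side, so $\widetilde f$ has bounded deviation along both $v^{\perp}$ and $-v^{\perp}$. The main obstacle is exactly the third paragraph: one must pass from an unboundedness certificate carried by arbitrary orbits — which, without area preservation, need not be recurrent — to a bi-recurrent admissible transverse path, and at the same time control the combinatorics of drawn and crossed essential loops well enough to guarantee both the needed $\widetilde{\mathcal F}$-transverse intersections and that the resulting three leaf-translates are genuinely spread apart in the $v^{\perp}$ direction. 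This is precisely the role of a technical ``unbounded transverse trajectory'' proposition.
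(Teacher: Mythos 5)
First, a point of comparison: the paper does not prove this statement at all — it is imported as Theorem A of~\cite{Guilherme_Thesis} — so there is no internal argument to measure your proposal against; it has to stand on its own as a proof, and as such it has a genuine gap, one you essentially flag yourself. Everything up to and including the second paragraph is routine (the absence of non-contractible periodic orbits, the choice of $I$, $\mathcal F$, the witnesses $\widetilde z_k$, the bi-recurrent admissible trajectory $\widetilde\Gamma_\mu$ of a $\mu$-typical point), but the third paragraph, where the contradiction is supposed to be produced, is a plan rather than an argument, and the two steps it rests on are exactly the hard content of the theorem. Concretely: (i) splicing $\widetilde\Gamma_k$ into a deck translate of $\widetilde\Gamma_\mu$ via Lemma~\ref{Forcing_proposition} yields an \emph{admissible} path, but not an $\widetilde{\mathcal F}$-\emph{bi-recurrent} one — the spliced-in piece comes from a possibly wandering orbit and there is no reason the concatenation is equivalent to a subpath of a recurrent transverse trajectory — so Lemma~\ref{three_transformations} simply does not apply to it. This is not a technicality: the whole point of Proposition~\ref{new_prop} in the present paper is to replace the three-translate criterion, which needs bi-recurrence, by a five-translate, visited-twice criterion valid for merely admissible paths, and even then its proof requires a delicate case analysis with the strips $A$, $A'$; your sketch needs either that proposition or a substitute for it, and provides neither. (ii) The mechanism ``a path of huge $v^{\perp}$-extent must cross essential transverse loops that $\widetilde\Gamma_\mu$ draws'' presupposes that the transverse trajectory of a $\mu$-generic point draws an essential transverse loop whose annulus $U_{\widetilde\beta}$ has bounded width in the $v^{\perp}$ direction, so that its $\Bbb Z^2$-translates form barriers. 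Nothing in the hypotheses gives this: the leaves of $\widetilde{\mathcal F}$ may be unbounded, $\widetilde\Gamma_\mu$ may fail to draw any essential loop, and ruling these degeneracies out (or handling them by separate arguments) is where the actual proof in~\cite{Guilherme_Thesis} spends its effort. Note that even in this paper, where boundedness of the leaves is needed (Lemma~\ref{bounded_leaves}), it is obtained only under the extra standing assumptions of that section via Lemma~\ref{gradient-like}, not for a general $\widetilde f\in\widetilde{\text{Homeo}}^+_0(\Bbb T^2)$ as in the statement here.

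A smaller remark: your reduction ``both follow once $|\langle\widetilde f^n(\widetilde x)-\widetilde x,v^{\perp}\rangle|$ is bounded'' is fine, as is the uniform sublinearity observation, but neither advances the argument; and the fallback options in parentheses (self-intersection via Lemma~\ref{translate_intersection}, or ``linear drift with nonzero $v^{\perp}$-component'') are not established alternatives — there is no dichotomy proved that one of the three outcomes must occur. So the proposal correctly identifies the shape of a forcing-theoretic proof and the right auxiliary lemmas, but the decisive step converting unbounded $v^{\perp}$-deviation into data satisfying the hypotheses of Lemma~\ref{three_transformations} (or of a result like Proposition~\ref{new_prop}) is missing, and it is precisely that step which constitutes the theorem.
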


The following is a direct combination of the above several lemmas. 
We state it for our convenience to apply later.
\begin{lemma}\label{Guelman}
Assume $\rho(\widetilde f)$ contains $(0,0)$, and 
one of the following sets of conditions holds. 
\begin{enumerate}
\item either $\rho(\widetilde f)$ is a non-trivial line segment, and $v\in \Bbb S^1$ is a direction perpendicular to this segment. 
\item or $\rho(\widetilde f)$ has non-empty interior, and $v \in \Bbb S^1$ 
is a direction such that $\langle r,v\rangle\leq 0$ for all $r\in \rho(\widetilde f)$.
\end{enumerate}
Then, $\widetilde f$ admits bounded deviation along $v$ .
\end{lemma}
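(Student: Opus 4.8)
The plan is to reduce Lemma~\ref{Guelman} to the lemmas that precede it by a case analysis on the two hypotheses, checking in each case that the requested direction $v$ is one of the directions already covered by the quoted bounded-deviation results. The key point throughout is that bounded deviation from the rotation set along $v$ is equivalent to a one-sided bound on $\text{pr}_v(\widetilde f^n(\widetilde x)-\widetilde x)$ relative to the worst displacement $n\cdot\sup\{\text{pr}_v(r):r\in\rho(\widetilde f)\}$ allowed by the rotation set, so one only needs to control a single coordinate of the displacement.

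First I would treat case (1), where $\rho(\widetilde f)$ is a non-trivial line segment and $v$ is perpendicular to it. If the segment has rational slope, then after possibly translating by the realizable rational vector given by Lemma~\ref{lemmafranks2} (which does not affect displacement differences along $v^\perp$ directions, since a rational translation contributes a linear-in-$n$ term along the segment, which is orthogonal to $v$) we may assume the segment contains $(0,0)$ with direction $v^\perp$, and then Lemma~\ref{davalos2018annular} gives bounded deviation along both $\pm v$. If the segment has irrational slope, one endpoint is a rational vector (and by Lemma~\ref{lemmafranks2} at most one, else there are non-contractible periodic points, but we do not even need this); again translating so that this endpoint is $(0,0)$, Lemma~\ref{Guilherme} applies directly and yields bounded deviation along both perpendicular directions. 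The only subtlety is the bookkeeping: I must note that ``bounded deviation along $v$'' is stated relative to $n\rho(\widetilde f)$, so the subtracted term $n\rho(\widetilde f)$ already absorbs the contribution of the segment in the $v^\perp$ direction, and the perpendicular projection of $n\rho(\widetilde f)$ is a single point (namely $0$ after our normalization), making the statement of Lemmas~\ref{davalos2018annular} and~\ref{Guilherme} literally the conclusion we want.

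Next I would treat case (2), where $\rho(\widetilde f)$ has non-empty interior and $\langle r,v\rangle\le 0$ for all $r\in\rho(\widetilde f)$. Here Lemma~\ref{deviation_forcing} gives bounded deviation along \emph{every} direction, so there is a constant $C>0$ with $\text{pr}_v(\widetilde f^n(\widetilde x)-\widetilde x-nr_0)<C$ for a suitable $r_0\in\rho(\widetilde f)$; combining with the sign hypothesis $\text{pr}_v(nr_0)=n\langle r_0,v\rangle/\|v\|\le 0$ we get $\text{pr}_v(\widetilde f^n(\widetilde x)-\widetilde x)<C$ outright, which is even stronger than bounded deviation from the rotation set along $v$ (the supremum of $\text{pr}_v$ over $\rho(\widetilde f)$ being $0$). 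Strictly speaking Lemma~\ref{deviation_forcing} is phrased as bounded deviation from $\rho(\widetilde f)$ along every direction; I would just unwind that definition to extract the one-sided inequality above.

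I do not expect a serious obstacle here — the content is entirely in the quoted lemmas, and the proof is a short reduction. The one place where care is needed is making sure the normalizations ``contains $(0,0)$'' demanded by Lemmas~\ref{davalos2018annular} and~\ref{Guilherme} are harmless: since Lemma~\ref{Guelman} only assumes $(0,0)\in\rho(\widetilde f)$, in case (1) the segment already contains $(0,0)$, so no translation is actually required and the quoted lemmas apply verbatim; I mention the translation argument above only to emphasize that even the rational-endpoint hypothesis in Lemma~\ref{Guilherme} is met. Thus the write-up is: state the equivalence of the two formulations of bounded deviation along $v$ when $0\le\sup_{\rho(\widetilde f)}\text{pr}_v$, then dispatch case (1) via Lemma~\ref{davalos2018annular} or Lemma~\ref{Guilherme} according to the slope, and case (2) via Lemma~\ref{deviation_forcing} together with the sign hypothesis.
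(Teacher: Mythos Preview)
Your proposal is correct and matches the paper's approach exactly: the paper gives no detailed proof, stating only that Lemma~\ref{Guelman} is a direct combination of Lemmas~\ref{deviation_forcing}, \ref{davalos2018annular}, and \ref{Guilherme}, which is precisely the case analysis you carry out. The only point you might make more explicit is that in the irrational-slope subcase of (1), $(0,0)$ is necessarily an \emph{endpoint} of the segment (since an irrational-slope line through the origin contains no other rational point, Lemma~\ref{ThmC_Forcing} forces $(0,0)$ to be an endpoint), so Lemma~\ref{Guilherme} applies verbatim without any translation.
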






\section{Non-wandering points and Non-contractible Periodic Orbits}
\textcolor{black}{Assume $S$ is a closed connected surface of genus $g\ge 1$, $f\in \text{Homeo}^+_0(S)$, $I$ is a maximal isotopy for $f$ and $\mathcal F$ is a foliation transverse to $I$. Further assume that $\widetilde {\mathcal F}$ and $\widetilde I$ are the lifts of 
$\mathcal F$ and $I$ to $\widetilde S$, the universal covering space of $S$, which is homeomorphic to $\Bbb R^2$.}
The main result of this section is the following proposition, which states that,
if an admissible path intersects twice five distinct deck images of one leaf in a nice way, 
then $f$ admits non-contractible periodic orbits. 

\begin{prop}\label{new_prop}
Let $0=t_0<t_1<t_2<t_3<t_4<1$, and assume there exists 
$\widetilde \beta:[0, 1+t_4] \to 
\text{Dom}(\widetilde {\mathcal F})$ which is an admissible 
$\widetilde {\mathcal F}$-transverse path, satisfying 
the following properties:
\begin{itemize}
\item There exists a  leaf $\widetilde \phi_0$ of $\widetilde {\mathcal F}$, and five pairwise distinct 
deck transformations $u_i, 0\le i\le 4$, such that $\widetilde \beta(t_i)=\widetilde \beta(1+t_i)\in u_i(\widetilde \phi_0)$ for $i=0,1,2,3,4$.
\item \textcolor{black}{If $t\in(0, t_1)$} 
 then both $\widetilde \beta(t)$ and $\widetilde \beta(1+t)$ do not belong to $u(\widetilde \phi_0)$ for any deck transformation $u$.
\end{itemize}
Then $f$ admits non-contractible periodic orbits. 
\end{prop}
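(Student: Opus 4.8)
The plan is to use the forcing machinery—specifically Lemma~\ref{three_transformations}—by producing from $\widetilde\beta$ a bi-recurrent admissible transverse path that meets three distinct deck translates of a single leaf. The path $\widetilde\beta$ is only defined on $[0,1+t_4]$ and need not be recurrent, so the first step is to iterate it: since $\widetilde\beta(t_i)=\widetilde\beta(1+t_i)$ for each $i$, the subpath $\widetilde\beta|_{[t_0,1+t_0]}$ is a transverse ``loop'' in the sense that its two endpoints coincide, and by the Forcing Proposition (Lemma~\ref{Forcing_proposition}), using the self-intersection data encoded in the five marked points, one can concatenate translated copies of pieces of $\widetilde\beta$ to build longer and longer admissible paths. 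The goal of this bookkeeping is to realize an admissible path $\widetilde\gamma:\Bbb R\to\text{Dom}(\widetilde{\mathcal F})$ that is $\widetilde{\mathcal F}$-bi-recurrent and still intersects at least three translates $T_i(\widetilde\phi_0)$.

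Here is the mechanism I would try to make precise. For each $i$, the equality $\widetilde\beta(t_i)=\widetilde\beta(1+t_i)$ together with the fact that both lie on $u_i(\widetilde\phi_0)$ gives, after passing to the annular covering $U_{u_i(\widetilde\phi_0)}$ or directly via the ``drawing/crossing'' language of Lemma~\ref{drawing_crossing_transverse}, an $\widetilde{\mathcal F}$-transverse self-intersection of $\widetilde\beta$, or at least a configuration to which Lemma~\ref{Forcing_proposition} applies with $\gamma_0=\gamma_1=\widetilde\beta$. Each such self-intersection lets us ``cut and splice'' $\widetilde\beta$ with a deck translate of itself while preserving admissibility and increasing the order additively. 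The second bullet—that on $(0,t_1)$ neither $\widetilde\beta(t)$ nor $\widetilde\beta(1+t)$ visits any translate of $\widetilde\phi_0$—is the combinatorial control that prevents these splicings from degenerating: it guarantees that the leaf $u_0(\widetilde\phi_0)$ is visited ``cleanly'' (as the first of the five leaves along an essential excursion), so that the spliced path genuinely retains intersections with several of the $u_i(\widetilde\phi_0)$ rather than collapsing onto a single leaf. Using Lemma~\ref{shortening_paths} one may also assume the relevant finite pieces have no superfluous self-intersections, which keeps the topological picture tractable.

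The endgame: once one has produced an admissible bi-recurrent $\widetilde\gamma$ meeting three distinct translates of one leaf, Lemma~\ref{three_transformations} (Proposition~43 of~\cite{Forcing}) directly yields non-contractible periodic orbits for $f$. Alternatively, in the course of the splicing one may instead directly exhibit an admissible path having an $\widetilde{\mathcal F}$-transverse intersection with a non-trivial deck translate of itself, in which case Lemma~\ref{translate_intersection} already finishes the argument; I would expect the proof to bifurcate according to whether the natural loop one builds is ``contractible-type'' or ``non-contractible-type'' in the complement of $\text{Fix}(I)$, handling the easy branch with Lemma~\ref{translate_intersection} and the delicate branch with Lemma~\ref{three_transformations}.

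The main obstacle I anticipate is precisely the passage from ``$\widetilde\beta(t_i)=\widetilde\beta(1+t_i)\in u_i(\widetilde\phi_0)$'' to an honest $\widetilde{\mathcal F}$-transverse \emph{intersection} (as opposed to a mere tangential or degenerate self-contact). Having a path return to the same point on a leaf does not by itself force a transverse intersection in the sense required by Lemma~\ref{Forcing_proposition}; one needs to exploit the ordering of the five leaves $u_i(\widetilde\phi_0)$ along $\widetilde\beta$ and the ``none of the others separates the two'' type hypotheses built into the definition of transverse intersection. Extracting that transversality—most likely by a careful case analysis of how the five translated leaves sit relative to one another in $\text{Dom}(\widetilde{\mathcal F})$, and invoking the fact that $g\geq 1$ so there is enough topology—is where the real work lies; everything after that is an application of the forcing lemmas already quoted.
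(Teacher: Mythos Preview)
Your endgame is right: the proof does terminate either via Lemma~\ref{translate_intersection} (transverse intersection with a nontrivial deck translate) or via Lemma~\ref{three_transformations} (an admissible loop meeting three translates of one leaf), and Lemma~\ref{shortening_paths} is indeed used as a preliminary cleanup. But the mechanism you propose for producing the transverse intersection is not viable as stated, and the missing geometric idea is substantial. The equality $\widetilde\beta(t_i)=\widetilde\beta(1+t_i)$ does \emph{not} by itself yield an $\widetilde{\mathcal F}$-transverse self-intersection, and there is no way to upgrade it to one by looking only at the local picture near the five leaves; your acknowledged obstacle is real, and the case analysis you gesture at (``how the five translated leaves sit relative to one another'') is not how the paper overcomes it.

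What the paper actually does is this. After the reductions, the two subarcs $\widetilde\beta|_{[0,t_1]}$ and $\widetilde\beta|_{[1,1+t_1]}$ (both joining $\widetilde\phi_0$ to $u_1(\widetilde\phi_0)$, and clean by the second bullet) are extended by $u_1$-periodicity to two infinite $u_1$-invariant foliated strips $A$ and $A'$; one may assume $A'$ lies below $A$, and one sets $U$ to be the union of $A$, $A'$, and the bounded complementary pockets. The key observation is that $\widetilde\beta|_{[t_1,1]}$ \emph{must leave} $U$ (since $\widetilde\beta(0)=\widetilde\beta(1)$ lies on $\widetilde\phi_0$, which separates $U$, while the path is transverse). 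One then analyzes the first exit and last re-entry of $\widetilde\beta$ through $\partial U$: a four-case analysis (exit/entry from the left of $A$ or the right of $A'$) using the drawing/crossing Lemma~\ref{drawing_crossing_transverse} shows that either $\widetilde\beta$ intersects some $u_1^{\ell}(\widetilde\beta)$ transversely with $\ell\neq 0$ (done by Lemma~\ref{translate_intersection}), or there is a crossing subarc $\widetilde\beta|_{[d,d^\ast]}\subset(t_1,1)$ that intersects both $\widetilde\beta|_{[0,t_1]}$ and $\widetilde\beta|_{[1,1+t_1]}$ transversely, with $[d,d^\ast]$ avoiding $t_2,t_3,t_4$. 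In this last case, splicing at one of these two intersections (the choice depending on whether $d>t_3$ or $d^\ast<t_3$) produces an admissible loop meeting three of the $u_i(\widetilde\phi_0)$, and Lemma~\ref{three_transformations} finishes. The five leaves are thus used asymmetrically: $u_0,u_1$ build the strip, while $u_2,u_3,u_4$ are there only so that whichever loop one forms still captures three translates.
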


The proof of this proposition is postponed to subsection~\ref{five_proof}.

\subsection{Proof of Theorem~\ref{non-wandering_Bounded_M} using Proposition~\ref{new_prop}}\label{section_3.1}
Let us first show an immediate consequence of Proposition~\ref{new_prop}.

\begin{coro}\label{five_translates_we_will_use}
Let $\widetilde x_0\in \Omega(\widetilde f)$, and assume that 
\textcolor{black}{there are distinct deck transformations $u_i, \, 0\le i\le 4$,  and a  leaf $\widetilde\phi_0$ of $\widetilde {\mathcal F}$,
 such that the transverse trajectory 
 $\widetilde{I}_{\widetilde{\mathcal{F}}}^{\Bbb R}(\widetilde x_0)$ intersects $u_i(\widetilde\phi_0)$ for each $0\le i\le 4$. }
Then, $\widetilde f$ admits non-contractible periodic points. 
\end{coro}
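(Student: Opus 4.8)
The plan is to deduce the corollary from Proposition~\ref{new_prop} by extracting, from the infinite transverse trajectory of a non-wandering point, a finite admissible subpath which returns close to itself and which in that finite window already sees five distinct deck translates of a single leaf. First I would fix the five translates $u_0(\widetilde\phi_0),\dots,u_4(\widetilde\phi_0)$ intersected by $\widetilde I_{\widetilde{\mathcal F}}^{\Bbb R}(\widetilde x_0)$, together with the corresponding parameters $s_0<s_1<\dots<s_4$ at which the transverse trajectory meets them (reordering the $u_i$ if necessary so that the parameters are increasing, which does not affect the hypothesis of Proposition~\ref{new_prop}). Pick an integer interval $[-N,N]$ large enough that $\widetilde I_{\widetilde{\mathcal F}}^{[-N,N]}(\widetilde x_0)$ contains all five of these leaf-crossings in its interior; equivalently, there is a finite admissible transverse path $\widetilde\gamma = \widetilde I_{\widetilde{\mathcal F}}^{2N}(\widetilde f^{-N}(\widetilde x_0))$ (admissible of order $2N$) whose image meets each $u_i(\widetilde\phi_0)$.

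Next I would use that $\widetilde x_0\in\Omega(\widetilde f)$ together with the continuity Lemma~\ref{lemma_of_continuity}. Projecting to $S$, the point $x_0=\pi_S(\widetilde x_0)$ lies in a small neighbourhood $W$ given by Lemma~\ref{lemma_of_continuity} for $z=x_0$ and the integer $n=2N$, so that $I^{2N}_{\mathcal F}(z')$ is $\mathcal F$-equivalent to a subpath of $I^{2N+2}_{\mathcal F}(f^{-1}(z''))$ for all $z',z''\in W$; lifting $W$ to a neighbourhood $\widetilde W$ of $\widetilde f^{-N}(\widetilde x_0)$. Since $\widetilde f^{-N}(\widetilde x_0)$ is non-wandering for $\widetilde f$ (the non-wandering set is $\widetilde f$-invariant), there is some integer $m$, which can be taken arbitrarily large, and a point $\widetilde y\in\widetilde W$ with $\widetilde f^{m}(\widetilde y)\in\widetilde W$; taking $m$ large enough that $m>2N$. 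Then the transverse trajectory $\widetilde I_{\widetilde{\mathcal F}}^{\Bbb R}(\widetilde y)$, restricted first to a window of length $2N$ around $\widetilde y$ and then to a window of length $2N$ around $\widetilde f^m(\widetilde y)$, contains two subpaths each $\widetilde{\mathcal F}$-equivalent to (a translate of) $\widetilde\gamma$, hence each meeting the five translates $u_i(\widetilde\phi_0)$ — the second copy being translated by the deck transformation $T$ corresponding to the homotopy class of the loop formed by the $I$-trajectory segment from $\widetilde y$ to $\widetilde f^m(\widetilde y)$. Concatenating the intervening admissible arc (which is admissible by construction, being a genuine transverse trajectory segment), I obtain a single admissible transverse path $\widetilde\beta$ which, in two disjoint parameter windows, meets each of the five leaves $u_i(\widetilde\phi_0)$ and each of their $T$-translates.

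Finally I would massage this path into the exact shape required by Proposition~\ref{new_prop}: rescale the parameter so that the relevant return times are at $t_0<t_1<t_2<t_3<t_4$ and $1+t_0<\dots<1+t_4$ in $[0,1+t_4]$, and apply Lemma~\ref{shortening_paths} to remove superfluous excursions so that on $(0,t_1)$ neither $\widetilde\beta(t)$ nor $\widetilde\beta(1+t)$ lies on any deck translate of $\widetilde\phi_0$ — shrinking the first window to begin exactly at the first leaf-crossing and noting that between consecutive prescribed crossings one may always reparametrise so that the first of the five leaves $u_0(\widetilde\phi_0)$ is the one crossed at $t_0$. Proposition~\ref{new_prop} then yields a non-contractible periodic orbit for $f$, and a lift argument (replacing $u_i$ by $T^{-1}u_i$ if needed, or simply invoking that a non-contractible periodic orbit for $f$ lifts to the requisite statement for $\widetilde f$) gives the non-contractible periodic points for $\widetilde f$ as stated. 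The main obstacle is the bookkeeping in the middle step: ensuring that the two copies of $\widetilde\gamma$ extracted near $\widetilde y$ and near $\widetilde f^m(\widetilde y)$ are translates of one another by a \emph{single} deck transformation $T$, and that the five pairs of leaves $(u_i(\widetilde\phi_0), T u_i(\widetilde\phi_0))$ are genuinely met in the correct cyclic order with no spurious earlier intersections — this is where Lemma~\ref{lemma_of_continuity} and Lemma~\ref{shortening_paths} must be combined carefully, and where one must be slightly careful that ``non-wandering for $\widetilde f$'' is being used rather than merely ``non-wandering for $f$''.
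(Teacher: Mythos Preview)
Your overall strategy matches the paper's proof exactly: shift $\widetilde x_0$ so that a finite transverse trajectory already meets the five translates, use Lemma~\ref{lemma_of_continuity} to get a neighbourhood on which the transverse trajectory is stable, use $\widetilde x_0\in\Omega(\widetilde f)$ to find a return, and feed the resulting admissible path into Proposition~\ref{new_prop}. Two points in your write-up need correction.

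First, the deck transformation $T$ you introduce is spurious and, as written, would break the argument. You correctly use that $\widetilde f^{-N}(\widetilde x_0)\in\Omega(\widetilde f)$ to find $\widetilde y,\widetilde f^{m}(\widetilde y)\in\widetilde W$, where $\widetilde W$ is a neighbourhood \emph{in $\widetilde S$}. There is no loop here and no deck transformation: the second window of the trajectory of $\widetilde y$ is $\widetilde{\mathcal F}$-equivalent to a subpath hitting the \emph{same} leaves $u_i(\widetilde\phi_0)$, not $Tu_i(\widetilde\phi_0)$. This is not a cosmetic point, since Proposition~\ref{new_prop} requires $\widetilde\beta(t_i)$ and $\widetilde\beta(1+t_i)$ to lie in the \emph{same} leaf $u_i(\widetilde\phi_0)$; if the second pass only met a nontrivial translate of the first, the proposition would not apply. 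Your final caveat about ``non-wandering for $\widetilde f$ vs.\ $f$'' is exactly the point: once you use the right hypothesis, $T$ disappears and the paragraph about it should simply be deleted.

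Second, Lemma~\ref{shortening_paths} does not do what you claim in the last step. That lemma removes $\mathcal F$-transverse self-intersections from a subpath; it says nothing about avoiding deck-translates of $\widetilde\phi_0$ on $(0,t_1)$. The way to secure the second bullet of Proposition~\ref{new_prop} is instead to \emph{relabel} the five deck transformations. Since the two windows are each $\widetilde{\mathcal F}$-equivalent to the same path $\widetilde I^{T}_{\widetilde{\mathcal F}}(\widetilde x_0)$, they cross the same ordered list of translates of $\widetilde\phi_0$; among these there are at least five distinct ones, and one can always choose indices $i_0<i_1<i_2<i_3<i_4$ with five distinct values so that $i_1=i_0+1$ (take $i_0$ to be the last index from which five distinct values are still visible to the right). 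Rename these as the new $u_0,\dots,u_4$, and then the condition on $(0,t_1)$ holds automatically in both passes. The paper itself is terse here (``via an appropriate re-parametrization''), but this is the content behind that phrase.
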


\begin{proof}
Let us fix the leaf $\widetilde \phi_0 \in \widetilde {\mathcal F}$ 
and also the deck transformations $u_0,\cdots, u_4$. Up to replacing $\widetilde x_0$ with some $\widetilde f^{-k}(\widetilde x_0)$ if necessary, we can assume that 
the transverse trajectory 
$\widetilde I_{\widetilde {\mathcal F}}^T(\widetilde x_0)$ intersects each of 
$u_0( \widetilde \phi_0), u_1( \widetilde \phi_0),\cdots,u_4( \widetilde \phi_0)$ (in that order), for some large integer $T$. 
 By Lemma~\ref{lemma_of_continuity}, 
 we can find a small neighbourhood $\widetilde V$ of $\widetilde x_0$, with the following properties. 
 \begin{enumerate}
 \item for any $\widetilde y, \widetilde y'\in \widetilde V$, the transverse path $\widetilde I_{\widetilde {\mathcal F}}^T(\widetilde y)$ is $\widetilde {\mathcal F}$-equivalent to a subpath of the trajectory 
$\widetilde I_{\widetilde {\mathcal F}}^{T+2}(\widetilde f^{-1}(\widetilde y'))$.
\item for any point $\widetilde y\in \widetilde V$, the \textcolor{black}{transverse} 
path $\widetilde I_{\widetilde {\mathcal F}}^T(\widetilde y)$ intersects the same $5$ deck images of $\widetilde \phi_0$ as above, in the same order.
 \end{enumerate}

Since $\widetilde x_0$ is nonwandering, we can find $\widetilde y_\ast\in \widetilde V$ and some integer $M>T+2$, 
such that
$\widetilde f^M(\widetilde y_\ast)$ is also in in $\widetilde V$. 
By the choice of $\widetilde V$, the paths $\widetilde I^{T+2}_{\widetilde {\mathcal F}}(\widetilde f^{-1}(\widetilde y_\ast))$ and $\widetilde I^{T+2}_{\widetilde {\mathcal F}}(\widetilde f^{M-1}(\widetilde y_\ast))$ both contain subpaths  that are $\widetilde {\mathcal F}$-equivalent to $\widetilde I^{T}_{\widetilde {\mathcal F}}(\widetilde x_0)$ and that therefore meets the same five deck images of $\widetilde \phi_0$, in the same order. 
We deduce that the transverse path $\widetilde I_{\widetilde{\mathcal F}}^{M+T+1}(\widetilde f^{-1}(\widetilde y_\ast))$ 
intersects each translate $u_i(\widetilde \phi_0)$ at least twice. Then, with these properties, 
via an
appropriate re-parametrization, 
we obtain an admissible $\widetilde {\mathcal F}$-transverse path $\widetilde \beta$ satisfying all the conditions of Proposition~\ref{new_prop}, and the conclusion follows.
\end{proof}
In order to show Theorem~\ref{non-wandering_Bounded_M}, 
let us prove a covering lemma first. 
\begin{lemma}\label{local_good_neighbourhood}
Under the hypotheses of Theorem~\ref{non-wandering_Bounded_M}, there exists a bounded fundamental domain $\widetilde S_0$ and a finite family 
of bounded open disks $\{ \widetilde E_j\}_{j=1}^{\kappa}$
in $\widetilde S$, satisfying the following properties. 
\begin{enumerate}
\item The union $\bigcup_{j=1}^{\kappa} \widetilde E_j$ projects to a fully essential open set $O$, whose complement 
$O^c$ in $S$ contains $\text{Fix}(I)$. 
\item Any $\widetilde f$-orbit segment intersecting both $\widetilde S_0$ and its complement in $\widetilde S$, 
must intersect some $\widetilde E_j$ from the above list.
\item For any $j=1,\cdots, \kappa$, there exists some leaf $\widetilde \phi_j$ of $\widetilde{\mathcal{F}}$ such that, for any $\widetilde z_0\in \widetilde E_j$, the transverse path $\widetilde I^2_{\widetilde {\mathcal F}} (\widetilde f^{-1}(\widetilde z_0))$ intersects $\widetilde \phi_j$.
\end{enumerate}
\end{lemma}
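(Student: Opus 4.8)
The plan is to build $\widetilde S_0$ and the family $\{\widetilde E_j\}$ by hand inside $\widetilde S$, using only that $K:=\mathrm{Fix}(I)=\mathrm{Sing}(\mathcal F)$ is compact and inessential (we are under the standing assumptions of this section, so $S$ has genus $\ge 1$), and that the displacement of $\widetilde f$ is bounded. First I record two facts. Since every point of $K$ has a constant isotopy path, $\widetilde f$ and $\widetilde f^{-1}$ fix $\pi_S^{-1}(K)=\mathrm{Sing}(\widetilde{\mathcal F})$ pointwise; and since $\widetilde f$ commutes with the deck group, which acts by isometries with compact quotient, the function $\widetilde z\mapsto\max\{\|\widetilde f(\widetilde z)-\widetilde z\|,\|\widetilde f^{-1}(\widetilde z)-\widetilde z\|\}$ is deck-invariant, hence bounded by some $C>0$; write $N_C(X)$ for the closed $C$-neighbourhood of $X$.

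Because $K$ is inessential I can fix nested embedded disks $K\subseteq V_1\Subset V_2\Subset V_3$ with $\overline{V_3}$ a closed disk on which $\pi_S$ is injective, and then — since $\overline{V_3}$ is inessential — a bounded fundamental polygon $\widetilde S_0$ (a lift of the closed $2$-cell of a CW structure on $S$ with a single $2$-cell) whose $1$-skeleton misses $\overline{V_3}$; its deck translates tile $\widetilde S$ with $\mathrm{int}(u\widetilde S_0)\cap\widetilde S_0=\emptyset$ for $u\neq\mathrm{id}$, and the lift $\widetilde{\overline{V_2}}$ of $\overline{V_2}$ that sits inside $\widetilde S_0$ lies in $\mathrm{int}\,\widetilde S_0$, as does the lift $\widetilde K$ of $K$. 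Using that $\widetilde f^{\pm1}$ fix $\widetilde K$ pointwise and are continuous, a routine compactness argument lets me shrink $V_2$ so that in addition $\widetilde f(\widetilde{\overline{V_2}})\cup\widetilde f^{-1}(\widetilde{\overline{V_2}})\subseteq\mathrm{int}\,\widetilde S_0$; note this estimate does not involve $C$. Now put $A:=N_C(\widetilde S_0)\setminus\pi_S^{-1}(V_1)$, a compact set at positive distance from $\mathrm{Sing}(\widetilde{\mathcal F})$, and cover $A$ by finitely many bounded open disks $\widetilde E_1,\dots,\widetilde E_\kappa$ contained in $\mathrm{Dom}(\widetilde{\mathcal F})$ and disjoint from $\pi_S^{-1}(K)$.

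Property (1) is then immediate: $\bigcup_j\widetilde E_j\supseteq A\supseteq\widetilde S_0\setminus\pi_S^{-1}(V_1)$, so $O:=\pi_S(\bigcup_j\widetilde E_j)$ contains $\pi_S(\widetilde S_0\setminus\pi_S^{-1}(V_1))=S\setminus V_1$, which is fully essential, while $O$ avoids $K$ because the $\widetilde E_j$ avoid $\pi_S^{-1}(K)$. For (3) I use a local crossing observation: for any $w\in\mathrm{Dom}(I)$ the path $\widetilde I^1_{\widetilde{\mathcal F}}(\widetilde w)$ is non-trivial, so it crosses some leaf $\widetilde\psi$ at an interior time, with $\widetilde w$ strictly to the right of $\widetilde\psi$ and $\widetilde f(\widetilde w)$ strictly to its left; lying on a prescribed side of a leaf is an open condition, so on a neighbourhood of $w$ the path $\widetilde I^1_{\widetilde{\mathcal F}}(\widetilde z_0)$ still joins the right side to the left side of $\widetilde\psi$ and hence meets $\widetilde\psi$ — and therefore so does $\widetilde I^2_{\widetilde{\mathcal F}}(\widetilde f^{-1}(\widetilde z_0))$, which contains $\widetilde I^1_{\widetilde{\mathcal F}}(\widetilde z_0)$ as a subpath up to $\widetilde{\mathcal F}$-equivalence. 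Refining the cover so that each $\widetilde E_j$ lies in such a neighbourhood yields the leaves $\widetilde\phi_j:=\widetilde\psi$ required in (3).

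It remains to verify (2), which I expect to be the crux. Let an $\widetilde f$-orbit segment meet both $\widetilde S_0$ and its complement; then it contains two consecutive points $p\in\widetilde S_0$ and $q\notin\widetilde S_0$ with $q=\widetilde f^{\pm1}(p)$, whence $\|p-q\|\le C$ and $q\in N_C(\widetilde S_0)$. Since $\pi_S^{-1}(V_1)\subseteq\pi_S^{-1}(\overline{V_2})$, it suffices to show $q\notin\pi_S^{-1}(\overline{V_2})$, for then $q\in A\subseteq\bigcup_j\widetilde E_j$. If $q$ lay in the lift $u(\widetilde{\overline{V_2}})$ of $\overline{V_2}$: when $u=\mathrm{id}$ this gives $q\in\mathrm{int}\,\widetilde S_0$, contradicting $q\notin\widetilde S_0$; when $u\neq\mathrm{id}$, since $\widetilde f^{\pm1}$ commutes with $u$, we get $p=\widetilde f^{\mp1}(q)\in u\bigl(\widetilde f^{\mp1}(\widetilde{\overline{V_2}})\bigr)\subseteq u(\mathrm{int}\,\widetilde S_0)$, which is disjoint from $\widetilde S_0$, contradicting $p\in\widetilde S_0$. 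This proves (2). The delicate point in the whole construction is the joint choice of the fundamental domain and of $\overline{V_2}$: one must bury $\mathrm{Fix}(I)$ deep inside $\mathrm{int}\,\widetilde S_0$ and then take $\overline{V_2}$ so small that neither $\widetilde f$ nor $\widetilde f^{-1}$ pushes its distinguished lift out of $\mathrm{int}\,\widetilde S_0$ — an estimate coming purely from the continuity of $\widetilde f$ near $\mathrm{Sing}(\widetilde{\mathcal F})$, with no constraint relating it to the displacement bound $C$ (which, crucially, may be large). Everything else is soft.
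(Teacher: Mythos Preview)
Your argument is correct and follows essentially the same strategy as the paper: bury $\mathrm{Fix}(I)$ in a small disk whose distinguished lift stays inside $\mathrm{int}\,\widetilde S_0$ under $\widetilde f^{\pm1}$, then cover a compact set avoiding the singularities by finitely many disks with the leaf-crossing property. The two proofs differ only in bookkeeping: the paper tracks the \emph{last point inside} $\widetilde S_0$ (showing it lies in $\widetilde K=\widetilde S_0\setminus\bigcup\widetilde D_{z_i}$) and covers $\overline{\widetilde K}$, whereas you track the \emph{first point outside} and therefore need to cover the larger set $A=N_C(\widetilde S_0)\setminus\pi_S^{-1}(V_1)$; your version also handles the backward crossing $q=\widetilde f^{-1}(p)$ explicitly, which the paper's construction (arranging only $f(D_{z_i})\subset U$) does not, though only the forward case is used downstream. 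For property~(3) the paper simply invokes Lemma~\ref{lemma_of_continuity}, while your hands-on ``open sides of a leaf'' argument is really the same statement read in $\mathrm{Dom}(\mathcal F)^{\mathrm{uni}}$, where each leaf is a genuine Brouwer line separating the plane --- phrasing it there removes any ambiguity about what ``right of $\widetilde\psi$'' means in $\widetilde S$. One small presentational point: when you shrink $V_2$ you must shrink $V_1$ along with it (or simply choose $\widetilde S_0$ and $V_2$ before $V_1$); otherwise the nesting $V_1\Subset V_2$ can break.
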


\begin{proof} By the assumption that $\text{Fix}(f)$ is inessential, we also know that  $\text{Fix}(I)$ is a closed and inessential set. This means that $\text{Fix}(I)$ is contained in the interior of an open topological disk $U_0$, and by compactness we can find another topological disk $U$ containing $\text{Fix}(I)$ such that $\overline U \subset U_0$. We can, for each $z$ in  $\text{Fix}(I)$, find a small open disk $D_z\subset U$ such that $\overline{ f(D_z)}$ is also contained in $U$, and by compactness one can find points $(z_i)_{i\in\{1, \hdots, n_0}$ in $\text{Fix}(I)$ such that $\bigcup_{i=1}^{n_0} D_{z_i} \supseteq \text{Fix}(I)$. Let $\widetilde U$ be a connected component of the lift of $U$,
and, for each $z_i$, let $\widetilde D_{z_i}$ be the connected component of the lift of $D_{z_i}$ that is 
contained in $\widetilde U$  and let $\widetilde z_i$ be the lift of $z_i$ that is contained in $\widetilde D_{z_i}$. 
Note that, as each $z_i$ is in $\text{Fix}(I)$, each $\widetilde z_i$ is in $\text{Fix}(\widetilde f)$, and therefore $\widetilde f(\widetilde D_{z_i})$ intersects $\widetilde D_{z_i}$ and therefore intersects $\widetilde U$. 
Since $f(D_{z_i})\subset U$, we deduce that $\widetilde f(\widetilde D_{z_i})\subset \widetilde U$. 
Therefore, if a point $\widetilde y_0$ belongs to $\widetilde U$ but its image does not, we have that $\widetilde y_0$ cannot belong to $\bigcup_{i=1}^{n_0} \widetilde D_{z_i}$. Let $\widetilde S_0$ be a fundamental domain for $S$ that contains $\widetilde U$, and since $\overline U$ is
contained in a topological disk, we may assume that $\widetilde S_0$ is bounded. 
Write $\widetilde K=\widetilde S_0 \setminus \bigcup_{i=1}^{n_0} \widetilde D_{z_i}$, then any point in $\widetilde S_0$ whose image lies outside $\widetilde S_0$ must lie in $\widetilde K$.

Since each point in $\text{Fix}(I)$ has a lift in the interior of $\widetilde S_0 \backslash \widetilde K$, we get that $\overline{\widetilde K}$ is a subset of $\text{Dom}(\widetilde{\mathcal F})$. This implies that, by Lemma~\ref{lemma_of_continuity}, for each $\widetilde y \in \overline{\widetilde K}$, there exists an open topological disk $\widetilde E_{\widetilde y}$  containing $\widetilde y$, such that if $\widetilde z_0 \in \widetilde E_{\widetilde y}$, then $\widetilde I^2_{\widetilde {\mathcal F}} (\widetilde f^{-1}(\widetilde z_0))$ intersects $\widetilde \phi_{\widetilde y}$. The result now follows from the compactness of  $\overline{\widetilde K}$.
\end{proof}

\begin{proof}[Proof of Theorem~\ref{non-wandering_Bounded_M}] Obtain $\widetilde S_0$, $\kappa$ and $\widetilde E_j, 1\le j\le \kappa$ from the previous Lemma. Let $L_0$ be the diameter of $\widetilde S_0$, and let $L_1=\max_{\widetilde z\in\widetilde S} \|\widetilde f(\widetilde z)-\widetilde z\|$, which is well defined since $\widetilde f$ commutes with the deck transformations. 
Suppose statement (2) does not happen. Then we choose $M \geq (4\kappa+2)(L_0+L_1)$, and for 
some $\widetilde z_0\in \Omega(\widetilde f)$, and some positive integer $N_1$, $\|\widetilde f^{N_1}(\widetilde z_0) - \widetilde z_0\| > M$. Then one can find $(4\kappa+2)$  integers $k_l$, with $0\le k_1<k_2<\hdots<k_{4\kappa+2}\le N_1$ such that $\|\widetilde f^{k_{l_1}}(\widetilde z_0)- f^{k_{l_2}}(\widetilde z_0)\|>L_0$ if $l_1<l_2$ are two elements chosen from $\{1,\cdots, 4\kappa +2\}$. 
 Therefore one can find distinct deck transformations $u_l, 1\le l\le (4\kappa+2)$ such that $\widetilde f^{k_{l}}(\widetilde z_0)\in u_l(\widetilde S_0)$.

Again by the previous lemma, one finds $(j_l)_{1\le l\le 4\kappa+1}$ and $(k'_l)_{1\le l\le 4\kappa+1}$, with $k_l\le k'_l<k_{l+1}$ such that $\widetilde f^{k'_{l}}(\widetilde z_0)\in u_l(\widetilde E_{j_l})$. So by pigeonhole principle, there exists some disk $\widetilde E$ from the list $\{\widetilde E_j\}_{j=1}^\kappa$, such that the trajectory $\widetilde I^{[0,N_1]}_{\widetilde{\mathcal{F}}}(\widetilde z_0)$  intersects at least five distinct translates of $\widetilde E$. By Corollary~\ref{five_translates_we_will_use}, it follows that $f$ admits non-contractible periodic orbits, which is statement (1). 
So the proof is finished. 
  \end{proof}

\subsection{Proving Proposition~\ref{new_prop}}\label{five_proof}
This subsection is devoted to the proof of 
Proposition~\ref{new_prop}. 
Let us now make some preparations. 
We assume that $S$ is connected, otherwise we work with the connected component of $S$ that contains $\widetilde \phi_0$.
Without loss of generality and to lighten the notation, we assume that $u_0$ is the identity, otherwise we just rename $u_0(\widetilde \phi_0)$ as $\widetilde \phi_0$. 
Let $\widetilde \gamma: [0,1]\to \text{Dom}(\widetilde {\mathcal F})\subset \widetilde S$  be an $\widetilde {\mathcal F}$-transverse path such that $\widetilde \gamma(0)\in \widetilde \phi_0$ and $\widetilde \gamma(1)=u_1 (\widetilde \gamma(0)) \in u_1(\widetilde \phi_0)$, 
where $u_0$ and $u_1$ are two distinct deck transformations, and such that the projection of $\widetilde \gamma\mid_{(0,1)}$ does not intersect the projection of $\widetilde \phi_0$. Then, we can look at the quotient 
$\check S= \widetilde S/u_1$, which is homeomorphic to the open annulus.
Denote by $\widecheck{\gamma}$ the projection of the path 
$\widetilde \gamma\mid_{[0,1]}$ to $\check S$, and observe that it is $\widecheck{\mathcal {F}}$-equivalent to a closed $\widecheck{\mathcal{F}}$-transverse loop, where $\widecheck{\mathcal{F}}$ is the projection of $\widetilde{\mathcal{F}}$ to $\check S$.  
We also assume that $\widetilde \gamma$ has no $\widetilde {\mathcal{F}}$-transverse intersection with $u_1^{j}(\widetilde \gamma)$ for all $j\in \Bbb Z$ (in particular, $\widetilde \gamma$ has no transverse self-intersection). 

Then we extend the domain of 
$\widetilde \gamma$ to the whole line $\Bbb R$ by defining 
\begin{equation}
\widetilde \gamma(t+1)=u_1(\widetilde \gamma(t)) \text{ for all } t \in \Bbb R.
\end{equation}
As $\widetilde S$ is homeomorphic to disjoint copies of the plane, we recall that a line is a proper and injective map from $\Bbb R$ to $\widetilde S$. 
\begin{lemma}
$\widetilde \gamma$ is a line. 
\end{lemma}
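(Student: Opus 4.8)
The plan is to prove that $\widetilde\gamma:\mathbb R\to\text{Dom}(\widetilde{\mathcal F})$ is a line, i.e.\ a proper and injective map, using the two structural hypotheses we have imposed: that the projection of $\widetilde\gamma\mid_{(0,1)}$ misses the projection of $\widetilde\phi_0$, and that $\widetilde\gamma$ has no $\widetilde{\mathcal F}$-transverse intersection with any translate $u_1^j(\widetilde\gamma)$. First I would establish injectivity. Suppose $\widetilde\gamma(s)=\widetilde\gamma(t)$ with $s<t$. Writing $s,t$ in terms of their integer parts and the equivariance relation $\widetilde\gamma(t+1)=u_1(\widetilde\gamma(t))$, one reduces to the case where the two parameters lie in a bounded window, and the coincidence forces a relation of the form $u_1^k(\widetilde\gamma(s_0))=\widetilde\gamma(t_0)$ for some $k\ge 0$ and $s_0,t_0$ in, say, $[0,1]$ or $[0,2]$. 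If $k=0$ this is a genuine self-intersection of $\widetilde\gamma\mid_{[0,2]}$, which (being an $\widetilde{\mathcal F}$-transverse path without transverse self-intersections) would have to be a \emph{leafwise} or tangential self-intersection; but a transverse path meeting a leaf does so at a single point in a trivialization chart, and combined with the projection of $\widetilde\gamma\mid_{(0,1)}$ avoiding the projection of $\widetilde\phi_0$ this cannot happen across the fundamental-domain boundary. If $k\ge 1$, then $\widetilde\gamma$ and $u_1^k(\widetilde\gamma)$ share a point; since by assumption they have no $\widetilde{\mathcal F}$-transverse intersection, the shared point must be a non-transverse (leafwise) intersection, and I would argue that two $\widetilde{\mathcal F}$-transverse arcs sharing a point but not crossing transversely must in fact coincide on a neighbourhood of that point, which propagates to force $u_1^k$ to fix the leaf through $\widetilde\gamma(0)$ — but $u_1^k$ is a nontrivial deck transformation of an infinite-cyclic or larger group and cannot fix $\widetilde\phi_0$, since that would make the projection of $\widetilde\phi_0$ to $\check S=\widetilde S/u_1$ a leaf of $\widecheck{\mathcal F}$ fixed by a nontrivial further deck transformation, contradicting that $\widetilde\gamma$ genuinely translates along it.

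Next I would prove properness. The key point is that $\widetilde\gamma\mid_{[n,n+1]}=u_1^n(\widetilde\gamma\mid_{[0,1]})$, so the image of $\widetilde\gamma$ is the $\langle u_1\rangle$-orbit of the compact arc $\widetilde\gamma([0,1])$. Since $u_1$ is a deck transformation of infinite order (its powers are pairwise distinct, as they carry $\widetilde\phi_0$ to the pairwise distinct leaves $u_1^j(\widetilde\phi_0)$ — recall the $u_i$ are pairwise distinct and in particular $u_1\neq u_0=\mathrm{id}$), the orbit of a compact set under $\langle u_1\rangle$ accumulates only at infinity: for any compact $\widetilde K\subset\widetilde S$, only finitely many translates $u_1^n(\widetilde\gamma([0,1]))$ meet $\widetilde K$, because the deck group acts properly discontinuously. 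Hence $\widetilde\gamma^{-1}(\widetilde K)$ is contained in a bounded union of intervals $[n,n+1]$, and since $\widetilde\gamma$ is continuous this preimage is closed, so it is compact; that is precisely properness. I should also note that in the genus-$\ge 1$ setting of this section $\widetilde S\cong\mathbb R^2$ so "line" just means a proper injective $\mathbb R\to\mathbb R^2$, and the above gives exactly that.

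The main obstacle I expect is the injectivity argument in the leafwise-intersection case: ruling out that $\widetilde\gamma$ meets a translate $u_1^k(\widetilde\gamma)$ in a non-transverse way. The hypotheses only forbid \emph{transverse} intersections, so I need the general principle — implicit in the forcing-theory framework of~\cite{Forcing} and~\cite{Calvez_Tal_topological} — that two $\widetilde{\mathcal F}$-transverse paths which intersect but do not intersect $\widetilde{\mathcal F}$-transversely must, near a common point, have their leaves ordered consistently so that one is "above" or "below" the other; iterating this along the arcs, together with the periodicity $\widetilde\gamma(t+1)=u_1(\widetilde\gamma(t))$ and the control on how $\widetilde\gamma$ sits relative to $\widetilde\phi_0$, should force $\widetilde\gamma$ and its translate to be disjoint unless they coincide, and coincidence contradicts $u_1^k\ne\mathrm{id}$. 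Carefully combining the local trivialization-chart picture (transverse paths cross each leaf exactly once, left to right) with the hypothesis that $\widetilde\gamma\mid_{(0,1)}$ projects off $\widetilde\phi_0$ is what makes this rigorous; everything else is bookkeeping with the equivariance relation and proper discontinuity of the deck action.
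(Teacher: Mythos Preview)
Your properness argument is fine and essentially matches the paper. The injectivity argument, however, has a genuine gap.

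The core issue is that you are treating ``no $\widetilde{\mathcal F}$-transverse intersection'' as if it were close to ``no topological intersection,'' and this is false. Two $\widetilde{\mathcal F}$-transverse arcs can meet at a common point (or even share a leaf) without intersecting $\widetilde{\mathcal F}$-transversely: the definition of $\widetilde{\mathcal F}$-transverse intersection is a \emph{global} condition on the relative position of the initial and terminal leaves, not a local crossing condition. So in your $k=0$ case, the sentence ``a transverse path meeting a leaf does so at a single point in a trivialization chart'' is only a local statement and does not rule out $\widetilde\gamma(t_0)=\widetilde\gamma(t_1)$ for $0<t_0<t_1<1$; the path can perfectly well revisit the same point while still having no $\widetilde{\mathcal F}$-transverse self-intersection in the technical sense. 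In your $k\ge 1$ case, the assertion that two transverse arcs sharing a point but not crossing $\widetilde{\mathcal F}$-transversely ``must coincide on a neighbourhood'' is simply not true, and the subsequent propagation to ``$u_1^k$ fixes $\widetilde\phi_0$'' does not follow.

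What the paper actually does is quite different. First it invokes Brouwer's translation arc lemma for the free homeomorphism $u_1$: if $\widetilde\gamma\mid_{[0,1)}$ were disjoint from $u_1(\widetilde\gamma\mid_{[0,1)})$, all the $u_1^n$-translates would be pairwise disjoint and injectivity is immediate. This reduces the problem to three concrete coincidence cases, all with parameters in $(0,1)$. In each case the paper extracts a \emph{simple} $\widetilde{\mathcal F}$- (or $\widecheck{\mathcal F}$-) transverse loop $\Gamma$ from the self-intersecting subarc, observes that $\Gamma$ misses every translate of $\widetilde\phi_0$, and then applies the drawing/crossing machinery (Lemma~\ref{drawing_crossing_transverse}, i.e.\ Proposition~24 of \cite{Calvez_Tal_topological}) to manufacture an $\widetilde{\mathcal F}$-transverse intersection of $\widetilde\gamma\mid_{[0,1]}$ with some $u_1^{\ell}(\widetilde\gamma\mid_{[0,1]})$, contradicting the standing hypothesis. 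That lemma is the missing ingredient: it is precisely the tool that converts a topological self-intersection (via the induced loop) into an $\widetilde{\mathcal F}$-transverse one.
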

\begin{proof}
That $\widetilde \gamma$ is proper follows directly from the fact that $u_1$ is a non-trivial deck transformation. 
We want to show that $\widetilde \gamma$ is injective. 
From standard Brouwer theory we know that if $\widetilde \gamma\mid_{[0,1]}$
 is a translation arc, that is, if $\widetilde \gamma\mid_{[0,1)}$ is disjoint from $u_1(\widetilde \gamma\mid_{[0,1)})$, 
then it holds that  
$(u_1)^{n_1}(\widetilde \gamma\mid_{[0,1)})$ is disjoint from 
$(u_1)^{n_2}(\widetilde \gamma\mid_{[0,1)})$ whenever $n_1\not=n_2$. Therefore we need only to consider three cases:
\begin{itemize}
\item[(a)]There exists $0<t_0<t_1<1$ such that $\widetilde \gamma(t_0)=\widetilde\gamma(t_1)$.
\item[(b)] There exists $0<t_0<t_1<1$ such that $\widetilde \gamma(t_0)=u_1(\widetilde\gamma(t_1))$.
\item[(c)] There exists $0<t_0<t_1<1$ such that $\widetilde \gamma(t_0)=(u_1)^{-1}(\widetilde\gamma(t_1))$.
\end{itemize}
In case $(a)$, 
 one finds that there exists some $t_0\le s_0<s_1\le t_1$ such that $\widetilde \gamma(s_0)$ 
 and $\widetilde \gamma(s_1)$ lie in the same leaf of $\widetilde{\mathcal{F}}$,
 but such that for all $s_0\le s<s'<s_1$ one has that $\widetilde \gamma(s)$ and $\widetilde \gamma(s')$ belong to different leaves. 
This implies that $\widetilde \gamma\mid_{[s_0,s_1]}$ is $\widetilde {\mathcal F}$-equivalent to a simple closed loop $\Gamma$, and since $\Gamma$ does not intersect any translate of $\widetilde \phi_0$, one has that there exists a drawing interval $J\subset[0,1]$ for $\Gamma$. In case both $\widetilde\gamma(0)$ and $\widetilde \gamma(1)$ lie in the same connected component of the complement of $\Gamma$, 
we have a contradiction using Lemma~\ref{drawing_crossing_transverse}, because we assumed $\widetilde \gamma$ has no $\widetilde{\mathcal{F}}$-transverse self-intersection.  
If not, and we assume that $\widetilde \gamma(1)$ is in the bounded connected component of $\Gamma^c$, then there exists a smallest $n>1$ such that $\widetilde \gamma(n)$ is in the unbounded connected component of $\Gamma^c$, which is finite since $\widetilde \gamma$ is unbounded. 
In this case $\widetilde \gamma$ has a crossing component contained in $[n-1, n]$. In other words, 
by Lemma~\ref{drawing_crossing_transverse}, 
$\widetilde \gamma\mid_{[0,1]}$ has an $\widetilde{\mathcal{F}}$-transverse intersection with $\widetilde \gamma\mid_{[n-1,n]}= (u_1)^{n-1}\widetilde \gamma\mid_{[0,1]}$, 
which again contradicts our assumptions. 

In case (b),  proceeding as in the above paragraph we find $t_0\le s_0<s_1\le t_1$ such that $\widecheck \gamma \mid_{[s_0, s_1]}$ is $\widecheck{\mathcal{F}}$-equivalent to a simple transverse loop $\widecheck \Gamma$ which is essential in $\widecheck S$, and which does not intersect the leaf $\widecheck \phi_0$, the projection of $\widetilde \phi_0$.  We can apply here again Lemma~\ref{drawing_crossing_transverse} to obtain that $\widecheck \gamma \mid_{[0,1]}$ has a $\widecheck{\mathcal{F}}$-transverse self-intersection, which implies that $\widetilde \gamma \mid_{[0,1]}$ must have an $\widetilde{\mathcal{F}}$-transverse intersection with $(u_1)^{l}(\widetilde \gamma \mid_{[0,1]})$ for some $l\in \Bbb Z$, again a contradiction. Case (c) is done almost exactly as case (b).
\end{proof}

\begin{definition}\label{A_0and_strip}
Suppose $\widetilde \gamma, u_1,\widetilde \phi_0$ are as in the previous paragraph. 
Let $A_0=A_0(\widetilde \gamma)$ denote
the union of all leaves met by $\widetilde \gamma \mid_{[0,1]}$, and call it the \emph{foliated block} for $\widetilde\gamma([0,1])$, or simply the \emph{block}. For each $k\in \Bbb Z$, call the $k$-th translated block as follows
\begin{equation}\label{definition_of_Ak}
A_k= A_k(\widetilde \gamma)= \big(u_1\big)^k (A_0).
\end{equation} 
Now define 
\begin{equation}\label{definition_infinite_stripA}
A =A(\widetilde \gamma)=\bigcup_{k\in\Bbb Z}  A_k = \bigcup_{k\in \Bbb Z}\big(u_1 \big)^k (A_0).
\end{equation} In other words, 
$A(\widetilde \gamma)$ is the infinite strip consisting of all the leaves intersected by $\widetilde \gamma \mid_{\Bbb R}$.
Denote by $\mathcal L(A)$ the union of all the connected components of the complement of $A$ that lie in the left of $\widetilde \gamma$. Similarly, let $\mathcal R(A)$ denote the union of all connected components of the complement of $A$ that lie in the right of $\widetilde \gamma$.
\end{definition}

If an $\widetilde {\mathcal F}$-transverse path $\widetilde \beta \mid_{[a,a+\varepsilon]}$ satisfies that 
  $\widetilde \beta(a)\in \mathcal L(A)$ (respectively, $\mathcal R(A)$)
   and $\widetilde \beta \big ((a,a+\varepsilon]\big) \subset A_k$ for some $k\in\Bbb Z$, 
  then we say the segment 
  $\widetilde \beta \mid_{[a,a+\varepsilon]}$ enters $A$ from $\mathcal L(A_k)$ 
  (respectively, from $\mathcal R(A_k))$. Similarly, 
  we say $\widetilde \beta \mid_{[a-\varepsilon,a]}$ leaves $A$ from 
  $\mathcal L(A_k)$ (respectively, $\mathcal R(A)$), 
  if $\widetilde \beta \big([a-\varepsilon,a) \big)\subset A_k$ and $\widetilde \beta (a)\in \mathcal L(A)$ 
  (respectively, $\mathcal R(A)$).
  
\begin{definition}\label{above_below_mixed}
Suppose two $\widetilde {\mathcal F}$-transverse paths
$\widetilde \gamma$ and $\widetilde \gamma': [0, 1]\to \text{Dom}(\widetilde {\mathcal F})$ are 
as described at the beginning of this subsection, and 
they satisfy that 
$\widetilde \gamma(0)=\widetilde \gamma'(0)\in \widetilde \phi_0$ and 
$\widetilde \gamma(1)=\widetilde \gamma'(1)= u_1(\widetilde \gamma(0)) \in u_1(\widetilde \phi_0)$,
where $u_1$ is a nontrivial deck transformations. 
Then one can obtain the blocks $A_0=A_0(\gamma)$ and $A_0'=A_{0}(\gamma')$ 
as in Definition~\ref{A_0and_strip} with respect to the paths $\widetilde \gamma$ and $\widetilde \gamma'$, respectively. 
Also define the translated blocks $A_k$ and $A_k'$ as in (\ref{definition_of_Ak}) for all $k\in \Bbb Z$, 
as well as the infinite strips $A=A(\gamma)$ and $A'=A(\gamma')$ as in (\ref{definition_infinite_stripA}), with respect to 
the extended infinite paths $\widetilde \gamma$ and $\widetilde \gamma'$, respectively. 
We say $A'$ is below $A$ if $A'\subset \mathcal R(A) \bigcup A$. 
We say $A'$ is above $A$ if $A' \subset \mathcal L(A) \bigcup A$.
If neither case above happens, we say $A'$ and $A$ are in mixed position. 
\end{definition}

See Figure~\ref{Figure_1} for an illustration of the case of $A'$ below $A$. In this case, the closure of the 
bounded region $D$ is the bounded connected component of the complement of $A_0\bigcup A_0'$.
\begin{figure}[h]
	\centering
	\includegraphics[width=0.8\textwidth]{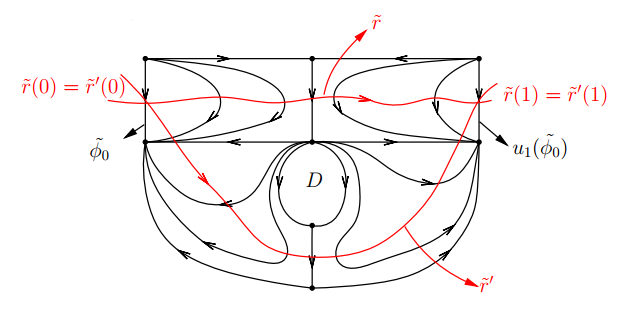}
	\caption{Here is the case where $A'$ is below $A$. 
	We only depict the 
	blocks $A_0$ and $A_0'$ above. The leaves of the foliation are in black and 
	the transverse paths $\widetilde \gamma$ and $\widetilde \gamma'$ are in red.  
	}\label{Figure_1}
\end{figure}

 Now we start the proof of Proposition~\ref{new_prop}, by making several first reductions. 
Claim that without loss of generality, 
one can assume $\widetilde \beta \mid_{[t_0,t_1]}$ has no $\widetilde {\mathcal F}$-transverse self-intersection. 
Suppose otherwise, that is, $\widetilde \beta \mid_{[t_0,t_1]}$ 
 has an $\widetilde {\mathcal F}$-transverse self-intersection at times $t$ and $t'$, with $t_0<  t<t' < t_1$. Then we can apply Lemma~\ref{shortening_paths} 
 to replace the transverse path by a new transverse path, still denoted by 
 $\widetilde \beta$, whose restriction to $[t_0,t_1]$ shortens and avoids 
$\widetilde {\mathcal F}$-transverse self-intersections, and moreover
$\widetilde \beta$ satisfies the rest of the conditions in Proposition~\ref{new_prop}. 

Similarly, using forcing property, we can also assume that 
$\widetilde \beta \mid_{[t_0,t_1]}$ and $\widetilde \beta \mid_{[1+t_0,1+t_1]}$
have no $\widetilde {\mathcal F}$-transverse intersection. 
To see this, suppose 
an $\widetilde {\mathcal F}$-transverse intersection between the above two transverse paths happens 
at the time $t\in [t_0,t_1]$ and $t' \in [1+t_0,1+t_1]$, respectively. 
Then we apply Lemma~\ref{Forcing_proposition} to obtain a new 
$\widetilde{\mathcal F}$-transverse admissible path, which concatenates $\widetilde \beta \mid_{[0,1+t']}$
with $\widetilde \beta \mid_{[t,t_4+\varepsilon]}$, which, after appropriate re-parametrization, 
avoids $\widetilde{\mathcal F}$-transverse 
intersection and still satisfies the original properties. 

Then we assume, for a contradiction, that $\widetilde \beta$ has no $\widetilde f$-transverse intersection with $u(\widetilde\beta)$ for any deck transformation $u$ other than the identity, because otherwise we already know there exists non-contractible periodic points and we have nothing to prove.

With all the above reductions, the path $\widetilde \beta \mid_{[t_0,t_1]}$ 
is $\widetilde {\mathcal F}$-equivalent to 
some path $\widetilde \gamma: [0,1]\to \text{Dom}(\widetilde {\mathcal F})$, with 
$\widetilde \gamma(1)=u_1 (\widetilde \gamma(0))$ and $\widetilde \gamma$ has the same properties as 
we have assumed in earlier discussion at the beginning of this sub-section. 
Likewise, the path $\widetilde \beta \mid_{[1, 1+t_1]}$ 
is $\widetilde {\mathcal F}$-equivalent to 
some path $\widetilde \gamma': [0,1]\to \text{Dom}(\widetilde {\mathcal F})$, with $\widetilde \gamma'(0)=\widetilde \gamma(0)$ and 
$\widetilde \gamma'(1)= \widetilde \gamma(1)= u_1 (\widetilde \gamma'(0))$. 
Then by Definition~\ref{above_below_mixed}, they induce the foliated 
blocks $A_0= A_0(\widetilde \gamma), A_0'= A_0(\widetilde \gamma')$,
and moreover the infinite strips $A, A'$, respectively. Note that if $A$ and $A'$ are in mixed position, 
then it means $\widetilde \beta \mid_{[t_0,t_1]}$ and $\widetilde \beta \mid_{[1+t_0,1+t_1]}$ intersect 
$\widetilde {\mathcal F}$-transversely. 
By previous reductions, we have assumed it is not the case. 
Thus, let us suppose $A'$ is below $A$ and the other case is symmetric.
Then we will look at a bigger region defined as follows.  

\begin{definition}
Define $U$ to be the union of $A$ and $A'$ and 
  all the closures of the bounded connected components of 
  the complement of $\widetilde \gamma \mid_{[k,k+1]}\bigcup \widetilde \gamma' \mid_{[k,k+1]}$ for all  $k\in\Bbb Z$. 
  \end{definition}
 Viewing Figure~\ref{Figure_1}, we see $U$ is simply the union of $A_0,A_0'$, the closure of $D$ and all their $u_1^j$ translates.  
  
  \begin{lemma}\label{outside_strip}
There exists some $b \in (t_1, 1)$ such that $\widetilde \beta(b) \notin U$.
\end{lemma}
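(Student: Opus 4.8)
The plan is to argue by contradiction: I will assume that $\widetilde\beta(t)\in U$ for every $t\in(t_1,1)$ and produce an $\widetilde{\mathcal F}$-transverse intersection of $\widetilde\beta$ with a non-trivial deck translate of itself, contradicting the reduction already in force.

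First I would upgrade this hypothesis to $\widetilde\beta([0,1+t_1])\subseteq U$. This is immediate once one notes that $\widetilde\beta|_{[t_0,t_1]}$ is $\widetilde{\mathcal F}$-equivalent to $\widetilde\gamma$ and $\widetilde\beta|_{[1+t_0,1+t_1]}$ is $\widetilde{\mathcal F}$-equivalent to $\widetilde\gamma'$, so these subpaths meet exactly the leaves constituting the blocks $A_0$ and $A_0'$; hence $\widetilde\beta([0,t_1])\cup\widetilde\beta([1,1+t_1])\subseteq A_0\cup A_0'\subseteq U$. I would also record a structural fact about $U$: the union of all leaves of $\widetilde{\mathcal F}$ that meet $U$ is exactly $A\cup A'$. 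Indeed, $U=A\cup A'\cup\bigcup_k\overline{D_k}$; the frontier of each bounded region $D_k$ is contained in $\widetilde\gamma\cup\widetilde\gamma'$, hence in $A\cup A'$; and a leaf through an interior point of the bounded set $\overline{D_k}$ cannot be trapped inside it, so it crosses this frontier and therefore is a leaf met by $\widetilde\gamma$ or $\widetilde\gamma'$, i.e. it lies in $A\cup A'$.

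The heart of the argument is the analysis of the return subpath $\widetilde\beta|_{[t_1,1]}$, which joins $\widetilde\beta(t_1)\in u_1(\widetilde\phi_0)$ --- the terminal leaf common to $A_0$ and $A_0'$ --- back to $\widetilde\beta(1)\in u_0(\widetilde\phi_0)=\widetilde\phi_0$ --- their common initial leaf. I would pass to the quotient annulus $\widecheck S=\widetilde S/u_1$, where $\widecheck\gamma$ and $\widecheck\gamma'$ are simple essential $\widecheck{\mathcal F}$-transverse loops, with $U_{\widecheck\gamma}$ and $U_{\widecheck\gamma'}$ equal to the projections $\widecheck A$, $\widecheck A'$ of $A$, $A'$. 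Since an essential transverse loop crosses only essential leaves, every leaf of $\widecheck{\mathcal F}$ meeting the projection $\widecheck U$ of $U$ is essential, so $\widecheck U$ is an essential sub-annulus of $\widecheck S$. Now $\widecheck\beta|_{[0,t_1]}$ is a drawing component of $\widecheck\gamma$ and $\widecheck\beta|_{[1,1+t_1]}$ a drawing component of $\widecheck\gamma'$, and these two subpaths wind once around $\widecheck S$ in the \emph{same} sense; consequently $\widecheck\beta|_{[t_1,1]}$ would have to wind once in the opposite sense while remaining in $\widecheck U$. Using the order induced on the (essential, coherently oriented) leaves of $\widecheck U$ by the transverse direction --- together with the basic fact that a transverse path crosses each such leaf always from its right to its left --- I would show that a transverse path confined to $\widecheck U$ can only wind in the first sense, so $\widecheck\beta|_{[t_1,1]}$ cannot stay in $\widecheck U$; more precisely, it would be forced to contain a crossing component for the loop $\widecheck\gamma$. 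Then Lemma~\ref{drawing_crossing_transverse}, applied to the drawing component $\widetilde\beta|_{[0,t_1]}$ and this crossing component (their relevant endpoints lying in a common connected component of the complement of $U_{\widecheck\gamma}$), yields an $\widecheck{\mathcal F}$-transverse intersection; this lifts to an $\widetilde{\mathcal F}$-transverse intersection between $\widetilde\beta|_{[0,t_1]}$ and a $u_1$-translate of $\widetilde\beta|_{[t_1,1]}$, hence between $\widetilde\beta$ and a non-trivial deck translate of itself --- the contradiction we sought.

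The step I expect to be the main obstacle is the winding/monotonicity argument for transverse paths confined to $\widecheck U$. Leaves of $\widetilde{\mathcal F}$ need not be properly embedded, so $u_1(\widetilde\phi_0)$ need not separate $\widetilde S$, and the ``a crossed leaf cannot be re-crossed from the other side'' reasoning has to be carried out intrinsically inside the essentially foliated annulus $\widecheck U$ rather than naively in $\widetilde S$; one also has to handle the degenerate possibility that $u_1$ stabilises $\widetilde\phi_0$, in which case $\widecheck\phi_0$ is a circle instead of a line and the order argument must be adjusted.
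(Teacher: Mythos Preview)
Your proposal is vastly more elaborate than necessary, and in its attempt to reach a contradiction through Lemma~\ref{drawing_crossing_transverse} it runs into real trouble: for an \emph{essential} simple $\widecheck{\mathcal F}$-transverse loop in the open annulus $\widecheck S$, the complementary components of $U_{\widecheck\gamma}$ are typically both unbounded, so there need not be any ``crossing component'' in the technical sense of the definition, and the drawing/crossing lemma is not applicable in the way you intend. The deck-translate contradiction you are aiming for is therefore not reached by that route.

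More importantly, you have overlooked the elementary observation that collapses the whole argument to four lines: the single leaf $\widetilde\phi_0$ \emph{separates} $U$. Indeed, by construction $U=\bigcup_{k\in\Bbb Z}\big(A_k\cup A'_k\cup \overline{D_k}\big)$, and adjacent ``blocks'' (for indices $k$ and $k-1$) share exactly the leaf $u_1^{k}(\widetilde\phi_0)$; removing $\widetilde\phi_0$ therefore disconnects $U$ into the piece with $k\ge 0$ and the piece with $k<0$. Now a transverse path contained in $U$ can meet $\widetilde\phi_0$ at most once, since each time it touches $\widetilde\phi_0$ it must pass from one fixed side to the other. As $\widetilde\beta(0)\in\widetilde\phi_0$, the path $\widetilde\beta|_{(0,1)}$ stays in one component of $U\setminus\widetilde\phi_0$ and cannot return to $\widetilde\phi_0$; but $\widetilde\beta(1)\in\widetilde\phi_0$ as well, a contradiction. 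This is exactly the paper's proof.

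Your ``winding/monotonicity'' idea is, once properly distilled, nothing other than this separation argument viewed in the quotient annulus: saying that a transverse path confined to $\widecheck U$ can only wind in one sense is the same as saying that in $\widetilde S$ it can cross each $u_1^k(\widetilde\phi_0)$ only once and always in the same direction. So the ``main obstacle'' you anticipated is in fact the whole proof --- and the cleanest way to carry it out is not to pass to $\widecheck S$ at all, but to stay in $\widetilde S$ and use that $\widetilde\phi_0$ separates $U$ (which sidesteps your worry about $\widetilde\phi_0$ not separating $\widetilde S$). No reductions, no crossing components, and no appeal to Lemma~\ref{drawing_crossing_transverse} are needed.
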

\begin{proof}
Suppose for contradiction that $\widetilde \beta \mid_{[t_1,1]}\subset U$. 
The leaf $\widetilde \phi_0$ divides $U$ into two disconnected subsets, 
namely $U\setminus \widetilde \phi_0= L\cup R$.
Clearly, since $\widetilde \beta \mid_{[0,1]}$ is an $\widetilde {\mathcal F}$-transverse path, it follows that 
$\widetilde \beta \mid_{(0,1]} \subset R$, which is absurd, because $\widetilde \beta (0)=\widetilde \beta(1)$.
\end{proof}
After the Lemma, we can write
\begin{align}
b_0 & = \inf \{ b>t_1 \big | \widetilde \beta(b)\notin U\}.\\
c_0 & = \sup \{c<1 \big | \widetilde \beta(c)\notin U\}.
\end{align}
Note that if a path leaves or enters $U$, it either does this from $\mathcal L(A)$ or from 
$\mathcal R(A')$, due to the assumption that $A'$ is below $A$. 
Thus, it suffices to discuss the following cases. 
\begin{enumerate}[\text{Case} (1).]
\item $\widetilde \beta (b_0) \in \mathcal L(A), \widetilde \beta(c_0) \in \mathcal L(A)$.
\item $\widetilde \beta (b_0) \in \mathcal L(A), \widetilde \beta(c_0) \in \mathcal R(A')$.
\item $\widetilde \beta (b_0) \in \mathcal R(A'), \widetilde \beta(c_0)\in \mathcal L(A)$.
\item $\widetilde \beta (b_0) \in \mathcal R(A'), \widetilde \beta(c_0) \in \mathcal R(A')$.
\end{enumerate}

\begin{lemma}\label{intersection_both_sides}
In all cases, either there is some integer $\ell\not=0$ such that $\widetilde \beta$ and $\big( u_1 \big)^\ell \cdot \widetilde \beta $ intersect $\widetilde {\mathcal{F}}$-transversely, or there exists some $t_1<d<d^\ast<1$ such that 
such that $\widetilde \beta \mid_{[t_0,t_1]}$ and 
$\widetilde \beta \mid_{[d,d^\ast]}$ intersect $\widetilde {\mathcal{F}}$-transversely and
 also $\widetilde \beta\mid_{[1+t_0,1+t_1]}$ and 
$\widetilde \beta \mid_{[d,d^\ast]}$ intersect $\widetilde {\mathcal{F}}$-transversely. 
Furthermore, we can assume that $t_i\notin [d,d^\ast]$ for $i \in \{2,3,4\}$. 
\end{lemma}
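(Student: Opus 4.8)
The plan is to track how the middle portion $\widetilde\beta\mid_{(t_1,1)}$ enters and leaves $U$ and to convert each passage through one of the strips $A$, $A'$ into an $\widetilde{\mathcal F}$-transverse intersection, via the drawing/crossing mechanism of Lemma~\ref{drawing_crossing_transverse} together with the forcing of Lemma~\ref{Forcing_proposition}. First I would pass to the annular quotient $\check S=\widetilde S/\langle u_1\rangle$ with its projected foliation $\widecheck{\mathcal F}$: there $\widetilde\gamma$, $\widetilde\gamma'$ project to essential simple $\widecheck{\mathcal F}$-transverse loops $\widecheck\gamma$, $\widecheck\gamma'$ with $U_{\widecheck\gamma}=\widecheck A$ (the image of $A$), $U_{\widecheck\gamma'}=\widecheck A'$, and $\widecheck A'$ below $\widecheck A$; moreover $\widecheck\beta\mid_{[t_0,t_1]}$ is $\widecheck{\mathcal F}$-equivalent to one full turn of $\widecheck\gamma$ and $\widecheck\beta\mid_{[1+t_0,1+t_1]}$ to one full turn of $\widecheck\gamma'$, so each contains a drawing component for the respective loop. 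The image of $U$ in $\check S$ has complement made of a ``North'' piece abutting $\widecheck A$ (coming from $\mathcal L(A)$), a ``South'' piece abutting $\widecheck A'$ (coming from $\mathcal R(A')$), plus possibly inessential pieces; that every point of $\widetilde S\setminus U$ lies in $\mathcal L(A)$ or $\mathcal R(A')$ is exactly what makes the four-case split on the sides of $\widetilde\beta(b_0)$, $\widetilde\beta(c_0)$ exhaustive.

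In cases $(2)$ and $(3)$ the first and last exits of $\widetilde\beta\mid_{(t_1,1)}$ from $U$ are on opposite sides. Choosing $d$ to be the last time before $c_0$ at which $\widetilde\beta$ is on the $\mathcal L(A)$-side and $d^{*}$ the matching time on the $\mathcal R(A')$-side (symmetrically in case $(3)$), we get $t_1<d<d^{*}<1$ with $\widetilde\beta(d)$, $\widetilde\beta(d^{*})$ on the two sides of the strip region and $\widetilde\beta\mid_{(d,d^{*})}$ not returning to the starting side. Such an $\widetilde{\mathcal F}$-transverse path must pass through all of $U$, so it crosses $A$ and it crosses $A'$; an $\widetilde{\mathcal F}$-transverse path crossing the infinite strip $A$ from $\mathcal L(A)$ to $\mathcal R(A)$ is forced by the orientation of $\widetilde{\mathcal F}$ along $A$ (much as in Proposition~24 of~\cite{Calvez_Tal_topological}) to draw $\widecheck\gamma$, and likewise it draws $\widecheck\gamma'$. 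I would then apply Lemma~\ref{drawing_crossing_transverse} in $\check S$ with $\gamma_{0}=\widecheck\beta\mid_{[t_0,t_1]}$ (a drawing component of $\widecheck\gamma$ whose endpoint lies in the appropriate North/South piece of $\widecheck A^{c}$) and $\gamma_{1}=\widecheck\beta\mid_{[d,d^{*}]}$; after matching the complementary components by choosing which turn of $\widecheck\gamma$ to use and translating by a suitable power of $u_1$, this produces an $\widecheck{\mathcal F}$-transverse intersection of $\widecheck\beta\mid_{[t_0,t_1]}$ with $\widecheck\beta\mid_{[d,d^{*}]}$. Lifting to $\widetilde S$, either this is an $\widetilde{\mathcal F}$-transverse intersection of $\widetilde\beta\mid_{[t_0,t_1]}$ with $\widetilde\beta\mid_{[d,d^{*}]}$, or of $u_1^{\ell}\widetilde\beta$ with $\widetilde\beta$ for some $\ell\ne 0$ (the first alternative). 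Repeating the same argument with $\widecheck\gamma'$ and $\widecheck\beta\mid_{[1+t_0,1+t_1]}$ gives the matching intersection with $\widetilde\beta\mid_{[1+t_0,1+t_1]}$ on the same interval $[d,d^{*}]$, which is the second alternative.

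In cases $(1)$ and $(4)$ the two exits are on the same side. Either $\widetilde\beta\mid_{(t_1,1)}$ does visit the opposite side somewhere in $(b_0,c_0)$, in which case a subinterval realizes the configuration of case $(2)$ or $(3)$ and we argue as above; or it leaves $U$ only on one side throughout $[t_1,1]$, and then I would use that $\widetilde\beta\mid_{[t_1,1]}$ has net homological displacement $-1$ around $\check S$ while $\widetilde\beta\mid_{[t_0,t_1]}$ draws $\widecheck\gamma$ with displacement $+1$: some excursion of $\widecheck\beta$ into the complementary piece must decrease the winding, and combining this with the drawing component of $\widecheck\beta\mid_{[t_0,t_1]}$ through Lemma~\ref{drawing_crossing_transverse} forces $\widetilde\beta$ to meet a nontrivial $u_1$-translate of itself $\widetilde{\mathcal F}$-transversely, again the first alternative.

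Finally, to ensure $t_i\notin[d,d^{*}]$ for $i\in\{2,3,4\}$ whenever we land in the second alternative: since $[d,d^{*}]$ is taken as a single minimal crossing of $U$, if some $t_i$ fell inside it one replaces $[d,d^{*}]$ by a still shorter subinterval realizing the same crossing (using Lemma~\ref{shortening_paths} and the stability of $\widetilde{\mathcal F}$-transverse intersections under passing to such a subpath), or appeals to the fact that a minimal crossing is contained in one of the gaps $(t_1,t_2)$, $(t_2,t_3)$, $(t_3,t_4)$, $(t_4,1)$. The step I expect to be the real obstacle is precisely this case-by-case bookkeeping: checking in each of the four cases that the hypotheses of Lemma~\ref{drawing_crossing_transverse} (drawing/crossing components and the matching of complementary components) can be arranged, treating the degenerate cases $(1)$ and $(4)$ so that one genuinely obtains a $u_1$-translate self-intersection, and keeping the final interval $[d,d^{*}]$ strictly between two consecutive distinguished times.
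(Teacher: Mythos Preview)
Your outline has the right flavor---the annular quotient and the drawing/crossing mechanism of Lemma~\ref{drawing_crossing_transverse} are precisely the tools underlying the argument---but the paper carries everything out directly in $\widetilde S$ by tracking \emph{block indices} $n_0,n_1,n_2$ (which block $A_k$ or $A'_k$ the path enters or leaves), and that bookkeeping is exactly what your sketch is missing.

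In Cases~(1) and~(4) the paper never splits into subcases: it notes that $\widetilde\beta\mid_{[t_0,b_0]}$ exits through $\mathcal L(A_{n_1})$ with $n_1\ge 1$ (it started at $\widetilde\phi_0$ and already crossed $u_1\widetilde\phi_0$ at time~$t_1$), while $\widetilde\beta\mid_{[c_0,1+t_1]}$ enters through $\mathcal L(A_{n_0})$ with $n_0\le -1$ (it ends at $u_1\widetilde\phi_0$ after first hitting $\widetilde\phi_0$ at time~$1$). Then $\ell=n_1-n_0-1\ge 1$ and $(u_1)^\ell\widetilde\beta\mid_{[c_0,1+t_1]}$ intersects $\widetilde\beta\mid_{[t_0,b_0]}$ $\widetilde{\mathcal F}$-transversely. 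Your ``net winding $-1$'' heuristic is pointing at this computation, but without the block indices you cannot actually conclude $\ell\ne 0$; in particular Cases~(1) and~(4) always yield the first alternative, which your reduction to~(2)/(3) would obscure.

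The genuine gap is your treatment of $t_i\notin[d,d^\ast]$. Neither of your proposed fixes works: Lemma~\ref{shortening_paths} removes transverse self-intersections, not incidences with prescribed leaves, and there is no reason a minimal crossing of $U$ should sit in one of the gaps $(t_i,t_{i+1})$. The paper's point is that in Cases~(2)/(3) one again tracks block indices $n_1,n_2$, and whenever $n_1\ge 2$ or $n_2\ne 0$ (or the exit block on the $A'$ side is $\ge 1$) one can manufacture a nonzero $\ell$ as in Case~(1). The second alternative arises \emph{only} in the single degenerate subcase $n_1=1$, $n_2=0$, with exit block $0$ on the $A'$ side. There $\widetilde\beta\mid_{[d,d^\ast]}$ is trapped inside $A_0\cup A'_0\cup\overline D$, a region that contains no translate of $\widetilde\phi_0$ other than $\widetilde\phi_0$ and $u_1\widetilde\phi_0$; since $\widetilde\beta(t_i)\in u_i\widetilde\phi_0$ with $u_i\notin\{u_0,u_1\}$ for $i\in\{2,3,4\}$, the final clause comes for free. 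Without isolating this degenerate subcase first, you have no mechanism to obtain it.
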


\begin{proof}
We deal with these cases separately. \\
\noindent {\textbf{Case (1)}}.
$\widetilde \beta (b_0) \in \mathcal L(A), \widetilde \beta(c_0) \in \mathcal L(A)$. (See Figure~\ref{Figure_2} for an
illustration of this case.)\\
We can suppose for some $\varepsilon>0$, 
$\widetilde \beta\mid_{[b_0-\varepsilon, b_0]}$
leaves $A$ from $\mathcal L(A_{n_1})$ for some $n_1\geq 1$.
$\widetilde \beta \mid_{[c_0,c_0+\varepsilon]}$ enters $A$ from 
$\mathcal L(A_{n_0})$ for some $n_0< 0$. Let  $\ell=n_1 -n_0 -1 \geq 1$. 
Then $ (u_1 )^\ell \cdot \widetilde \beta \big( [c_0, 1+t_1] \big)$ 
enters $A$ from $\mathcal L(A_{n_1-1})$ and stays in $U$ until reaching 
$\big(u_1 \big)^{n_1+1}(\widetilde \phi_0)$.
On the other hand, $\widetilde \beta \mid_{[t_0, b_0]}$ starts at $u_0(\widetilde \phi_0)$ and remains in $U$ intersecting $\big(u_1 \big)^{i}(\widetilde \phi_0), 0<i\le n_1,$ before it 
leaves $A$ from $\mathcal L(A_{n_1})$. Therefore, 
both $\widetilde {\mathcal F}$-transverse paths $ \big(u_1\big)^\ell \cdot \widetilde \beta \big ( [c_0, 1+t_1] \big)$ 
and $\widetilde \beta \mid_{[t_0, b_0]}$ 
intersect the leaf 
$ \big(u_1\big)^{n_1} (\widetilde \phi_0)$. By Lemma~\ref{drawing_crossing_transverse},
they must have an $\widetilde {\mathcal F}$-transverse intersection.\\
\begin{figure}[h]
	\centering
	\includegraphics[width=0.8\textwidth]{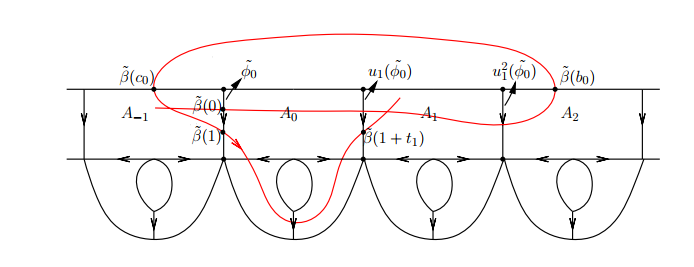}
	\caption{Here depicted is one possible example of Case (1). We have $n_1=2,n_0=-1$, and therefore $\ell=2$. 
	}\label{Figure_2}
\end{figure}

\noindent {\textbf{Case (2)}}. $\widetilde \beta (b_0) \in \mathcal L(A), \widetilde \beta(c_0) \in \mathcal R(A')$.\\
Note in this case there exists a crossing component for $U$.
More precisely, there exists an open interval $(d,d^\ast)\subset (b_0,c_0)$, 
such that, $\widetilde \beta(d)\in \mathcal L(A), \widetilde \beta (d^\ast) \in \mathcal R(A')$,
and $\widetilde \beta \big((d,d^\ast) \big) \subset U$. 
Similar with previous case, 
there are integers $n_1\geq 1, n_0\leq -1$,
 such that $\widetilde \beta \mid_{[t_0,b_0]}$ leaves $A$ from $\mathcal L(A_{n_1})$, 
 and $\widetilde \beta \mid_{[c_0,1]}$ enters $A'$ from $\mathcal R(A_{n_0}')$.
There is also $n_2\in \Bbb Z$ such that for some $\varepsilon>0$, 
$\widetilde \beta \mid_{[d,d+\varepsilon]}$ enters $U$ from $\mathcal L(A_{n_2})$, 
and note that $\widetilde \beta \mid_{[d,d^\ast]}$ leaves $U$ from $\mathcal R(A')$.
 
Similar to the argument in the final part of Case (1), 
we know that 
if $0\le n_2+ \ell<n_1$ for some $\ell \in \Bbb Z$, then
\begin{equation}\label{ell_translate}
\big(u_1\circ u_0^{-1}\big)^{\ell} \cdot \widetilde \beta \mid_{[d, d^\ast]} \text{ and } 
\widetilde \beta \mid_{[t_0, b_0]} \text{  intersect } \widetilde {\mathcal F}\text{-transversely}. 
\end{equation}
We further consider three subcases.\\
 \noindent{ \textbf{subcase (2.1)}}: $n_1\geq 2$. In this case, depending on $n_2 \geq 1$ or $n_2\leq 0$, 
 one can choose either a strictly negative or a strictly positive integer $\ell$. 
 It follows statement (\ref{ell_translate}) holds for non-zero $\ell$.\\  
 \noindent{ \textbf{subcase (2.2)}}: $n_2 \neq 0$. In this case, we simply choose $\ell=-n_2$. 
  Then statement (\ref{ell_translate}) holds for non-zero $\ell$.\\
   \noindent{ \textbf{subcase (2.3)}}: 
   $n_2=0$ and $n_1=1$. (See Figure~\ref{Figure_3} for an illustration of this subcase.)
    Now there is $\ell \geq n_2=0$ and some $\varepsilon>0$, 
   such that $\widetilde \beta \mid_{[d^\ast-\varepsilon,d^\ast]}$ leaves $A'$ from $\mathcal R(A_\ell')$.
   If $\ell\geq 1$, then similar to Case (1), 
   we see $(u_1\circ u_0^{-1})^\ell \cdot \widetilde \beta \mid_{[c_0,1+t_1]}$
   and $\widetilde \beta \mid_{[d,d^\ast]}$ intersect $\widetilde {\mathcal F}$-transversely. 
   If $\ell=0$, then $\widetilde \beta \mid_{[d,d^\ast]}$ and  $\widetilde \beta \mid_{[t_0,t_1]}$ intersect $\widetilde {\mathcal F}$-transversely. But in this case, 
      $\widetilde \beta \mid_{[d,d^\ast]}$ and  $\widetilde \beta \mid_{[1+t_0,1+t_1]}$ also 
      intersect $\widetilde {\mathcal F}$-transversely. Note that in this case, since $\widetilde\beta \mid_{[d,d^\ast]}$ lies int he union of $A_0\cup A'_0$ with the bounded connected components of its complement, and since neither $\widetilde\beta \mid_{[t_0,t_1]}$ nor $\widetilde\beta \mid_{[1+t_0,1+t_1]}$ intersect any copy of $\widetilde \phi_0$, we deduce that $t_i\notin [d, d^\ast], i\in\{2,3,4\}$, ending the proof in this case. \\
 \begin{figure}[h]
	\centering
	\includegraphics[width=0.8\textwidth]{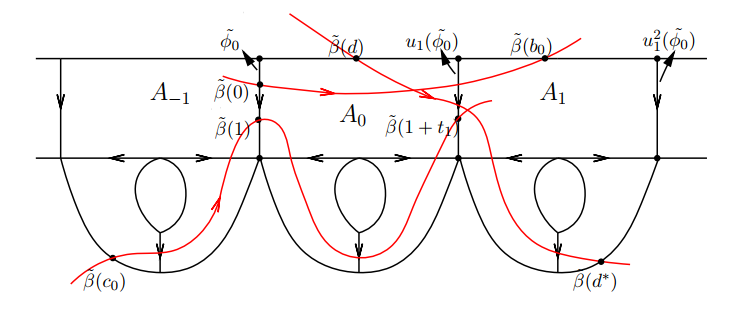}
	\includegraphics[width=0.8\textwidth]{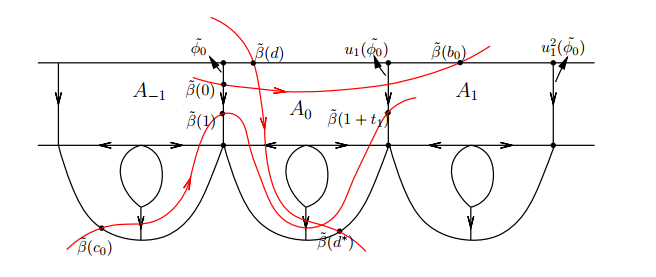}
	\caption{Here depicted are two possible situations of subcase (2.3). 
	We have $n_1=1,n_0=-1,n_2=0$. In the figure in the first line, $\widetilde \beta \mid_{[d,d^\ast]}$ leaves 
	$U$ from $\mathcal R(A_1')$, while in the figure in the second line, 
	$\widetilde \beta \mid_{[d,d^\ast]}$ leaves 
	$U$ from $\mathcal R(A_0')$.
	}\label{Figure_3}
\end{figure}

  \noindent {\textbf{Case (3)}}. $\widetilde \beta (b_0) \in \mathcal R(A), \widetilde \beta(c_0)\in \mathcal L(A)$.\\
  This case is very similar with Case (2) and we omit the proof.\\
    
  \noindent {\textbf{Case (4)}}. $\widetilde \beta (b_0) \in \mathcal R(A), \widetilde \beta(c_0) \in \mathcal R(A)$.\\
  This case is very similar to Case (1) and we omit the proof.
    \end{proof}

\begin{proof}[End of the proof of the Proposition~\ref{new_prop}] 
After Lemma~\ref{intersection_both_sides}, if we are in the first situations and we find some integer $\ell\not=0$, such that 
$\widetilde \beta$ and 
$\big(u_1\big)^\ell \cdot \widetilde \beta $ intersect $\widetilde {\mathcal F}$-transversely,
then we can conclude with non-contractible periodic orbit, 
with the help of Lemma~\ref{translate_intersection}. 
If not, and if $d$ given by Lemma~\ref{intersection_both_sides} is larger than $t_3$, then the $\widetilde{\mathcal F}$-transverse paths
$\widetilde \beta \mid_{[0, t_1]}$ and $\widetilde \beta \mid_{[d, d^\ast]}$ intersect 
$\widetilde {\mathcal F}$-transversely at moments 
$T_1<t_1$ and $T_1'>t_3$.  In this case the loop $\widetilde \beta\mid_{[T_1, T_1']}$ is admissible and intersects $u_1(\widetilde \phi_0), u_2(\widetilde \phi_0)$ and $u_3(\widetilde \phi_0)$. Therefore we can apply Lemma~\ref{three_transformations} and get the result. Finally, we are left with the case 
when $d<t_3$, which implies that $d^{\ast}<t_3$. 
Then the $\widetilde {\mathcal F}$-transverse paths
$\widetilde \beta \mid_{[1+t_0, 1+t_1]}$ and $\widetilde \beta \mid_{[d, d^\ast]}$ intersect 
$\widetilde {\mathcal F}$-transversely at moments $T_2'>1$ and $T_2<t_3$. 
In this case the loop $\widetilde \beta \mid_{[T_2, T_2']}$ is admissible and
it intersects $u_3(\widetilde \phi_0), u_4(\widetilde \phi_0)$ and $u_0(\widetilde \phi_0)$ and we end the proof with Lemma~\ref{three_transformations} as before.
\end{proof}

\section{Unbounded Deviation and Unbounded Orbit}
In this section, we show Proposition~\ref{unbounded_trajectory}, 
which guarantees existence of orbits unbounded in certain direction, under some mild assumptions. 
We start with the following statement, originally proved in \cite{Tal_noncontractible}, 
which is now seen as a corollary of Theorem~\ref{non-wandering_Bounded_M}. 
\begin{coro}\label{Tal_noncontractible}
Let $\widetilde f \in \widetilde {\text{Homeo}}^+_{0,\text{nw}}(\Bbb T^2)$, and assume 
$(0,0)\in \rho(\widetilde f)$.
Then one of the following cases is true.
\begin{enumerate}
\item $\pi \big( \text{Fix}(\widetilde f) \big)$ is essential. 
\item $f$ has non-contractible periodic points.
\item There exists some $M>0$, with the following properties. 
For any $x\in \Bbb T^2$,
 which is either an $f$-periodic point, or is $f$-inessential,
 the orbit $\{\widetilde f^n(\widetilde x)\}_{n\in \Bbb Z}$ has diameter bounded from above by $M$, 
for any lift $\widetilde x$ of $x$. 
\end{enumerate}
\end{coro}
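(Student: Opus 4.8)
\textbf{Proof strategy for Corollary~\ref{Tal_noncontractible}.}
The plan is to deduce this statement from Theorem~\ref{non-wandering_Bounded_M} by reducing the torus setting to the hypotheses of that theorem, treating separately the points to which the conclusion applies. First I would observe that, since we assume $(0,0)\in\rho(\widetilde f)$ and $f$ is non-wandering, every $f$-periodic point $x$ is automatically a contractible periodic point: indeed, by Lemma~\ref{lemmafranks2}, if $x$ lies in a non-contractible periodic orbit then case (2) holds and we are done; so in the remaining analysis we may assume all periodic points are contractible, which means their lifts are again periodic for $\widetilde f$, hence are $\widetilde f$-non-wandering (in fact $\widetilde f$-periodic). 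Similarly, if $x$ is $f$-inessential, then a neighbourhood $U$ of $x$ has $\bigcup_{n\in\Bbb Z} f^n(U)$ inessential, i.e.\ contained in a topological disk $D\subset\Bbb T^2$; choosing a lift $\widetilde D$ of $D$ and the lift $\widetilde x\in\widetilde D$, the whole $\widetilde f$-orbit of $\widetilde x$ stays in $\widetilde D$ (since $\widetilde f$ commutes with deck transformations and $\widetilde D$ is the unique lift of $D$ meeting $\widetilde x$), and as $x$ is non-wandering on $\Bbb T^2$ while the orbit is confined to a disk, $\widetilde x$ is $\widetilde f$-non-wandering.

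The second step is to apply Theorem~\ref{non-wandering_Bounded_M} with $S=\Bbb T^2$. If $\pi(\mathrm{Fix}(\widetilde f))$ is essential we are in case (1) and there is nothing to prove; otherwise $\mathrm{Fix}(\widetilde f)$ projects to an inessential set, but one must be slightly careful, since the theorem's hypothesis concerns $\mathrm{Fix}(f)$ rather than $\pi(\mathrm{Fix}(\widetilde f))$. Here I would argue that when no non-contractible periodic points exist, one may, after possibly composing the isotopy with a correction, assume $\mathrm{Fix}(f)=\pi(\mathrm{Fix}(\widetilde f))$ is inessential — more precisely, the relevant fixed point set in Theorem~\ref{non-wandering_Bounded_M} is really $\mathrm{Fix}(I)$ for the maximal isotopy, and every point of $\mathrm{Fix}(f)\setminus\mathrm{Fix}(I)$ is a contractible periodic point, so all these lift to fixed points of $\widetilde f$ that are genuinely $\widetilde f$-non-wandering. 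Assuming the fixed point set hypothesis holds, Theorem~\ref{non-wandering_Bounded_M} gives us the dichotomy: either $f$ has a non-contractible periodic orbit (case (2)) or there is a uniform constant $M>0$ with $\|\widetilde f^n(\widetilde z)-\widetilde z\|<M$ for all $\widetilde f$-non-wandering $\widetilde z$ and all $n$. In the latter case, combining with Step 1, every lift of an $f$-periodic point and every lift $\widetilde x\in\widetilde D$ of an $f$-inessential point $x$ satisfies this bound, so the orbit $\{\widetilde f^n(\widetilde x)\}_{n\in\Bbb Z}$ has diameter at most $2M$; taking this $2M$ as the constant in case (3) finishes the argument.

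The main obstacle I anticipate is the bookkeeping around the fixed point set in Step 2: the clean statement of Theorem~\ref{non-wandering_Bounded_M} asks that $\mathrm{Fix}(f)$ be inessential, whereas the hypothesis naturally available from the torus context (once case (1) is excluded) is that $\pi(\mathrm{Fix}(\widetilde f))$ is inessential, and these can differ when there are contractible periodic points of period one that the isotopy does not fix. I would handle this by passing to a maximal isotopy $I$ for $f$: points of $\mathrm{Fix}(I)$ form the singular set of the transverse foliation, and since we are assuming no non-contractible periodic orbits, there is no essential loop of fixed points forced by the dynamics, so the genuinely obstructive set is inessential; a point in $\mathrm{Fix}(f)\setminus\mathrm{Fix}(I)$ is a contractible periodic point whose lift is $\widetilde f$-periodic, hence covered by case (3) directly. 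The remaining steps — verifying that inessential orbits lift to bounded, non-wandering orbits, and that periodic points lift to non-wandering points — are routine once one is careful that $\widetilde f$ commutes with the deck group so that a disk in $\Bbb T^2$ has a well-defined lift containing any prescribed lift of a point in it.
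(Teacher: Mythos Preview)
Your overall strategy matches the paper's: assume cases (1) and (2) fail, show that lifts of both periodic and inessential points are $\widetilde f$-non-wandering, then invoke Theorem~\ref{non-wandering_Bounded_M}. The treatment of periodic points is fine (the appeal to Lemma~\ref{lemmafranks2} is unnecessary, though --- once case (2) is excluded, every $f$-periodic point is contractible by definition, so its lift is $\widetilde f$-periodic). Your discussion of the fixed-point-set hypothesis is also essentially right: once case (2) is excluded, every $f$-fixed point is contractible, hence $\mathrm{Fix}(f)=\pi(\mathrm{Fix}(\widetilde f))$, and excluding case (1) then gives the inessentiality needed for Theorem~\ref{non-wandering_Bounded_M}.

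There is, however, a genuine gap in your treatment of $f$-inessential points. You choose a disk $D\supset \bigcup_{n}f^n(U)$ and assert that the $\widetilde f$-orbit of $\widetilde x$ stays in a single lift $\widetilde D$, justifying this by ``$\widetilde D$ is the unique lift of $D$ meeting $\widetilde x$''. But $D$ is not $f$-invariant, and the isotopy path from $x$ to $f(x)$ need not remain in $D$; so although $f(x)\in D$, there is no reason the specific lift $\widetilde f(\widetilde x)$ lies in $\widetilde D$ rather than in some translate $\widetilde D+v$. The conclusion that $\widetilde x\in\Omega(\widetilde f)$ therefore does not follow from what you have written.

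The paper closes this gap with an extra idea you are missing. Using the structure of $\mathrm{Ine}(f)$ for non-wandering maps, one takes $D$ to be an $f$-\emph{periodic} open disk containing $x$, say of period $k$. Since $f^k\vert_D$ is non-wandering on a topological plane, Brouwer's plane translation theorem yields a fixed point $p\in D$ for $f^k$, i.e.\ an $f$-periodic point. As case (2) is excluded, $p$ is contractible, so its lift $\widetilde p\in\widetilde D$ is $\widetilde f$-periodic; from $\widetilde f^k(\widetilde D)=\widetilde D+v$ and $\widetilde f^k(\widetilde p)=\widetilde p$ one gets $v=0$. Only then does $\widetilde f^k(\widetilde D)=\widetilde D$ hold, and the non-wandering property of $\widetilde x$ follows. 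You should insert this Brouwer-theory step; without it the argument for inessential points is incomplete.
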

\begin{proof}
Let us 
assume the first two items do not happen. Then if $x$ is periodic for $f$, any lift $\widetilde x$ of $x$ is also periodic and therefore non-wandering. On the other hand, if $x$ is inessential, since $f$ is non-wandering we have that $x$ is contained in a an $f$ periodic open disk $D$, and if $k$ is the period of $D$, then we can assume $f^{j}(D)$ is disjoint from $D$ for $i\le j\le k-1$. Of course since $f^k$ preserves $D$ and it is nonwandering, Brouwer theory tell us there is a fixed point for $f^k$ in $D$. This implies that, if $\widetilde D$ is a connected component of $\pi^{-1}(D)$, then $\widetilde f^{k}(\widetilde D) = \widetilde D+ v$ for some $v\in \Bbb Z^2$, and since there was a periodic point in $\widetilde D$ which is contractible, we get that $v$ is null. This implies, since $x$ is non-wandering, that  $\widetilde x$ belongs to 
$\Omega(\widetilde f)$. Then item (3) follows immediately from Theorem~\ref{non-wandering_Bounded_M}. 
\end{proof}

\begin{prop}\label{unbounded_trajectory} 
Given $\widetilde f\in \widetilde {\text{Homeo}}^+_{0,\text{nw}}(\Bbb T^2)$, whose 
rotation set $\rho(\widetilde f)$ contains $(0,0)$. 
Assume that
for some $v\in \Bbb S^1$, 
\begin{equation}\label{unbounded assumption}
\sup_{\widetilde x\in \Bbb R^2, n\geq 0} \text{pr}_v(\widetilde f^n(\widetilde x)-\widetilde x)= +\infty.
\end{equation}
Then, one of the followings is true.
\begin{enumerate}[(a)] 
\item $ \pi(\text{Fix}(\widetilde f))$ is essential.
\item There is some $f$-invariant annulus, 
whose homological direction is not perpendicular to $v$.
\item $f$ has non-contractible periodic points.
\item There exists $\widetilde {x_0} \in \Bbb R^2$, such that,
\begin{equation}
\sup_{n\geq 0} \text{pr}_v(\widetilde f^n(\widetilde x_0)-\widetilde x_0)=+\infty.
\end{equation} 
\end{enumerate}
\end{prop}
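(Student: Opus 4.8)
The plan is to argue by contradiction, assuming that none of (a)--(d) holds, and to derive a violation of hypothesis~(\ref{unbounded assumption}). So assume $\pi(\mathrm{Fix}(\widetilde f))$ is inessential, $f$ has no non-contractible periodic points, there is no $f$-invariant annulus with homological direction non-perpendicular to $v$, and every individual orbit is bounded in the $v$-direction from above, i.e.\ for every $\widetilde x$ there is $M_{\widetilde x}$ with $\sup_{n\geq 0}\mathrm{pr}_v(\widetilde f^n(\widetilde x)-\widetilde x)\le M_{\widetilde x}$. The goal is a \emph{uniform} such bound, contradicting (\ref{unbounded assumption}). First I would fix a maximal isotopy $I$ for $f$ (which exists since $(0,0)\in\rho(\widetilde f)$ forces $f$ to have fixed points, via Lemma~\ref{MZ_rotation_properties} and Lemma~\ref{lemmafranks2}/\ref{franks1989realizing} type realization, or more simply because $\rho$ contains $(0,0)$), together with a transverse foliation $\mathcal F$ from Lemma~\ref{transverse_foliation_thm}, and lift everything to $\widetilde{\mathcal F}$, $\widetilde I$ on $\Bbb R^2$.

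The core mechanism is the forcing machinery already assembled in Section~3. If some $\widetilde f$-non-wandering point $\widetilde z$ had an orbit that is unbounded in the $v$-direction, then the associated transverse trajectory $\widetilde I^{\Bbb R}_{\widetilde{\mathcal F}}(\widetilde z)$ would have to intersect arbitrarily many deck translates of any fixed leaf $\widetilde\phi_0$ in a fixed ``direction''; combined with non-wandering this is exactly the input to Corollary~\ref{five_translates_we_will_use} (via Proposition~\ref{new_prop}) and would produce non-contractible periodic points, contradiction. So all non-wandering orbits are uniformly bounded in the $v$-direction — this is essentially the content of Theorem~\ref{non-wandering_Bounded_M} read directionally, or of Corollary~\ref{Tal_noncontractible}. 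The remaining issue is to promote this to \emph{all} orbits, using that $f$ has no wandering points on $\Bbb T^2$: a point $\widetilde x$ whose forward $v$-displacement is large must have its trajectory pass through many distinct fundamental domains in the $v$-direction, and each long excursion of $\widetilde I^{\Bbb R}_{\widetilde{\mathcal F}}(\widetilde x)$ that is ``monotone'' in that direction forces, via Lemma~\ref{drawing_crossing_transverse} and Lemma~\ref{three_transformations}, either a non-contractible periodic orbit or a transverse self/translate-intersection that again yields one by Lemma~\ref{translate_intersection} — unless the excursion is ``trapped'' inside an essential annular region. This last alternative is where the invariant annulus hypothesis~(b) and the $f$-inessential/essential dichotomy of \cite{Strictly_Toral} come in: an essential region trapping a monotone excursion, being $f$-invariant up to the relevant sub-iterate and carrying displacement in the $v$-direction, yields an $f$-invariant annulus whose homological direction is not perpendicular to $v$, which is case (b).

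The main obstacle, I expect, is handling the ``trapping'' case cleanly: one must show that if orbits escape in the $v$-direction but no non-contractible periodic point appears, the escaping is confined to an essential sub-surface which, after passing to the right finite iterate and using that $\mathrm{Ine}(f)$ consists of periodic disks (by non-wandering + Brouwer, as in the proof of Corollary~\ref{Tal_noncontractible}), gives a genuine $f$-invariant annulus with the stated homological direction. Making precise the link between ``transverse trajectory crosses many translates of a leaf in the $v$-direction'' and ``the point's actual $\Bbb R^2$-orbit is unbounded in the $v$-direction'' requires the geometric comparison between the isotopy trajectory $I^{\Bbb R}(\widetilde x)$ and its $\widetilde{\mathcal F}$-transverse representative, controlled up to a bounded error by the diameter of a fundamental domain and $\max_{\widetilde z}\|\widetilde f(\widetilde z)-\widetilde z\|$, exactly as in Lemma~\ref{local_good_neighbourhood} and the proof of Theorem~\ref{non-wandering_Bounded_M}. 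I would reuse that bounded-error dictionary verbatim. Everything else is routine: the pigeonhole extraction of ``five translates of one leaf'' parallels the argument in Section~\ref{section_3.1}, and the contradiction with~(\ref{unbounded assumption}) is then immediate once the uniform bound is in hand.
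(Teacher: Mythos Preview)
Your proposal has a genuine gap: you never explain how to pass from the pointwise bounds $M_{\widetilde x}$ to a \emph{uniform} bound, and the mechanism you suggest for this promotion does not work. The forcing tools you want to invoke (Proposition~\ref{new_prop}, Corollary~\ref{five_translates_we_will_use}, Lemma~\ref{three_transformations}) all require the relevant point to be non-wandering for $\widetilde f$, or the relevant transverse path to be $\widetilde{\mathcal F}$-bi-recurrent. A point that is non-wandering for $f$ on $\Bbb T^2$ is in general \emph{not} non-wandering for $\widetilde f$ on $\Bbb R^2$, and its transverse trajectory need not be recurrent at all. So once you have used Corollary~\ref{Tal_noncontractible} to bound the $\widetilde f$-orbits of periodic and $f$-inessential points, you are stuck: the remaining $f$-essential, aperiodic points cannot be fed into the forcing machinery, and the sentence ``each long excursion \dots\ forces, via Lemma~\ref{drawing_crossing_transverse} and Lemma~\ref{three_transformations}, either a non-contractible periodic orbit or a transverse self/translate-intersection'' is exactly where the argument fails. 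Your plan to extract the invariant annulus from a ``trapping'' region is likewise only a heuristic; you have no construction of an $f$-invariant essential open set from a single long excursion.

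The paper's proof sidesteps all of this with a short, purely topological argument that you are missing entirely. One assumes (d) fails, so $\Bbb T^2=\bigcup_{K>0} B_K$ where $B_K=\{x:\ \sup_{n\ge 0}\mathrm{pr}_v(\widetilde f^n(\widetilde x)-\widetilde x)\le K\}$ is closed; by Baire, the union $U=\bigcup_K \mathrm{Int}(B_K)$ is open, dense and $f$-invariant. The proof is then a case split on the topology of $U$: if $U$ is fully essential one gets a uniform bound and contradicts~(\ref{unbounded assumption}); if $U$ is essential but not fully essential, its filling produces the invariant annulus in (b), and the direction constraint comes from the fact that otherwise~(\ref{unbounded assumption}) would fail; if $U$ is inessential, it is a union of periodic disks (non-wandering plus Brouwer), and only then is Corollary~\ref{Tal_noncontractible} applied, the uniform bound on $U$ extending to all of $\Bbb T^2$ by density. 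The Baire step is the missing idea in your proposal.
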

\begin{proof}  
Fix the direction $v\in \Bbb S^1$. 
For any $K>0$, denote
\begin{equation}
B_K=
\{x \big| \text{pr}_v 
(\widetilde f^n(\widetilde x)-\widetilde x) \leq K \text{ for any } n\geq 0 \text{ and any lift }\widetilde x \text{ of } x\}.
\end{equation}
Then $B_K$ is a closed subset for any $K>0$. 
Now we denote its interior by 
$I_K=\text{Int}(B_K)$, possibly an empty set, 
and then denote $U=\bigcup_{K>0} I_K$.

From now on, 
let us suppose item (d) is not true. By definition, it means that
\begin{equation}\label{covering T2}
\bigcup_{K>0}B_K=\Bbb T^2.
\end{equation} 
Then,
 we will discuss all 
the possible cases under this assumption.
In every case, either item (a) is true, 
or item (b) is true, or item (c) is true, or it leads to a contradiction 
with the assumption (\ref{unbounded assumption}) or (\ref{covering T2}), 
which will conclude 
the proof.

Note $U$ is $f$-invariant, and $U$ is also open because it is a union of open sets. 
 We claim that $U$ is also dense.
 Suppose for contradiction that 
 $U$ is not dense, then there is some open set $W$ 
such that $W\bigcap U=\emptyset$. 
 It follows $W\bigcap B_K$ has no interior in $W$ 
for any $K>0$. On the other hand, since $W$ as an open subset of $\Bbb T^2$ is a Baire space, 
by Baire category theorem, we have $(\bigcup_{K>0}B_K) \cap W$ has no interior in $W$. 
This is clearly a contradiction to (\ref{covering T2}). This shows the claim.

If $U$ is essential, there are two subcases. Either $U$ is essential and not fully essential, or 
$U$ is fully essential. 

If $U$ is essential and not fully essential, 
then it has a connected component $U'$ that includes some loop representing an element 
in the first homology group $\text{H}_1(\Bbb T^2)\simeq \Bbb Z^2$, 
which is identified with 
some rational direction $(p,q)$. 
We can call this direction the homological direction of $U'$. 
Any connected component $\widetilde U'$ of $\pi^{-1}(U')$ 
by translation by $(p,q)$, but it must also be invariant by $\widetilde f$, otherwise every point in the rotation set of $\widetilde f$ would have a non-zero projection in the direction perpendicular to $(p,q)$. But since $\widetilde U'$ is invariant, then $\widetilde f$ have bounded deviation along the perpendicular direction of $(p,q)$.
It follows from assumption~(\ref{unbounded assumption}) that,
$v$ is not perpendicular to $(p,q)$. One can then consider $\mathrm{Fill}(U')$, the filling of $U$, obtained by taking the union of $U$ and all inessential connected components of its complement. Then $\mathrm{Fill}(U')$ is the annulus in  item (b) of the Proposition.

If $U$ is fully essential, then for any $z\notin U$, there exists some topological disk $D$ containing $z$, 
such that $\partial D\subset U$. 
Since $\partial D$ is compact, it is included in $I_K$ some sufficiently large $K>0$. 
It follows that 
the whole disk $D$ is included in $I_{K+\text{diam}(D)}$, where $\text{diam}_(D)$ denotes
the diameter of some connected component of the lift 
$\pi^{-1}(D) \subset \Bbb R^2$. In particular, 
$z\in I_{K+\text{diam}(D)} \subset U$, 
which is a contradiction. 
This contradiction shows $U=\Bbb T^2$ if $U$ is fully essential. 
However, since $\Bbb T^2$ is compact, 
it follows $\Bbb T^2=I_K$ for some large $K$, and 
this is a contradiction with the assumption (\ref{unbounded assumption}).
 
 We are left now to 
 the case when $U$ is inessential. Since $f$ is non-wandering, 
 this means $U$ is 
 a union of $f$-periodic disks. 
 Note that the assumption that case $(c)$ does not happen implies that, 
 the rotation set $\rho(\widetilde f)$ is contained in the closed half plane whose boundary is perpendicular to $v$. 
 By Corollary~\ref{Tal_noncontractible},
 there are several subcases.
 
 Subcase (1). 
 $\pi(\text{Fix}(\widetilde f))$ is essential. This gives 
 item (a) of the proposition. 
 
 Subcase (2). 
 $f$ admits non-contractible periodic orbits. This is item (c).
 

 Subcase (3). There exists some $M>0$, such that for any point $x\in U$ and any lift $\widetilde x$ of $x$, 
 the orbit $\{\widetilde f^n ( \widetilde x)\}_{n\in \Bbb Z}$
 has diameter uniformly bounded by $M$ from above. 
 Then, since $U$ is dense, it follows for all point $x\in \Bbb T^2$, the same holds true. 
 This is a contradiction with assumption (\ref{unbounded assumption}). 
\end{proof}

\begin{Remark}
In the case of annular dynamics, an interesting 
example was provided in Proposition 1.1 in \cite{Fabio_Conejeros}, where 
the dynamics admits unbounded displacement in the negative direction, but there is no single orbit which is unbounded in the negative direction. In our context, this means it is possible that item (b) happens and item (c) and (d) do not happen. The example is not non-wandering, but satisfies a weaker condition, called the curve intersection property. 
There are also examples of torus homeomorphisms with unbounded deviation along every direction,
which satisfies 
item (a) of the proposition and does not satisfy items (b) and (c). 
See Theorem 3 of~\cite{Koro_Tal_irrotational}.
\end{Remark}
\section{Non-contractible Periodic Orbits and Bounded Deviations}
This section is mainly devoted to proving Theorem~\ref{main_thm}, which will be split into 
two sub-sections. Finally, the proof of Theorem~\ref{rewritten_thm} is at the end of the section. 
Recall the conclusion of Theorem~\ref{main_thm} claims about the bounded deviation of certain homeomorphism. 
We will prove the theorem by contradiction. Thus, throughout, let us assume 
$\widetilde f\in 
\widetilde{\text{Homeo}}_{0,\text{nw}}^+(\Bbb T^2)$, 
whose rotation set $\rho(\widetilde f)$ is 
the line segment connecting 
$(0,0)$ and some irrational vector $(\alpha,\beta)$ so that $\alpha/\beta$ is also an irrational number. 
We will also fix a maximal isotopy $I$ for $f$ that lifts to a maximal isotopy $\widetilde I$ for $\widetilde f$. 
Then we find $\mathcal{F}$, $\widetilde{\mathcal{F}}$ which are Brouwer-Le Calvez foliations for $I$ and $\widetilde I$ respectively.

Assume, for a contradiction, that $\widetilde f$ admits unbounded deviations along the direction $-(\alpha,\beta)$.
\subsection{Unbounded trajectories and Bounded Leaves}
The first lemma provides abundance of points whose orbits are unbounded in the direction $-(\alpha,\beta)$.

\begin{lemma}\label{unbounded_orbits_to_the_left}
There exists some point $\widetilde x_0\in \Bbb R^2$ whose positive half-orbit $\{
\widetilde f^{n}(\widetilde x_0)\}_{n\geq 0}$
is unbounded in the direction $-(\alpha,\beta)$. 
Moreover, the set of points satisfying this property is dense in $\text{Ess}(f)$.
\end{lemma}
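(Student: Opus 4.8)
The plan is to deduce both assertions from Proposition~\ref{unbounded_trajectory}, applied with the unit vector $v=-(\alpha,\beta)/\|(\alpha,\beta)\|$. Since $\rho(\widetilde f)$ is the segment $[(0,0),(\alpha,\beta)]$ we have $\text{pr}_v((0,0))=0$ and $\text{pr}_v((\alpha,\beta))=-\|(\alpha,\beta)\|<0$, so $\max_{r\in\rho(\widetilde f)}\text{pr}_v(r)=0$; hence the assumed failure of bounded deviation along $-(\alpha,\beta)$ is precisely hypothesis~(\ref{unbounded assumption}). I would then rule out conclusions (a), (b), (c) of the Proposition, so that (d) must hold, and (d) is exactly the first assertion. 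Conclusion (c) produces a non-contractible periodic point whose lift satisfies $\widetilde f^q(\widetilde x)=\widetilde x+(p_1,p_2)$ with $(p_1,p_2)\neq(0,0)$; then $(p_1,p_2)/q$ is a non-zero rational vector of $\rho(\widetilde f)$, which is impossible because the only rational point of a segment joining the origin to a vector of irrational slope is the origin. Conclusions (a) and (b) I would treat together: an $f$-invariant essential annulus, or an essential connected component of $\Bbb T^2\setminus\pi(\text{Fix}(\widetilde f))$ (on which complement $f$ is fixed pointwise), yields, possibly after replacing $f$ by an iterate, an $f$-invariant filled annulus with rational homological direction $(p,q)$; since such an annulus lifts to a strip of bounded width transverse to $(p,q)$, every invariant measure has rotation vector in $\Bbb R(p,q)$, so $\rho(\widetilde f)\subseteq\Bbb R(p,q)$, contradicting that $(\alpha,\beta)$ is totally irrational; and if $\pi(\text{Fix}(\widetilde f))$ were fully essential, every complementary component would have bounded lift and frontier fixed by $\widetilde f$, hence zero integer displacement, so every $\widetilde f$-orbit would be bounded and $\rho(\widetilde f)=\{(0,0)\}$, again absurd.

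For the density statement I would work with $B_K=\{x\in\Bbb T^2\mid \text{pr}_v(\widetilde f^n(\widetilde x)-\widetilde x)\le K\text{ for all }n\ge 0\}$ and $G=\Bbb T^2\setminus\bigcup_{K>0}B_K$; thus $G$ is exactly the set of points whose positive half-orbit is unbounded in the direction $-(\alpha,\beta)$, and $G\neq\emptyset$ by the above. Two preliminary observations: $G$ is $f$-invariant, because $C_1:=\sup_z\|\widetilde f(z)-z\|<\infty$ gives $f^{\pm1}(B_K)\subseteq B_{K+C_1}$; and $G\subseteq\text{Ess}(f)$, because an $f$-inessential point has a neighbourhood whose orbit is an inessential $f$-invariant open set, on a periodic component of which the integer displacement must vanish (else it would again give a non-zero rational vector of $\rho(\widetilde f)$), so the orbit of such a point stays in a bounded set and the point lies in some $B_K$.

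Now I would argue by contradiction: suppose $G$ is not dense in $\text{Ess}(f)$ and set $\mathcal U=\Bbb T^2\setminus\overline G$. Then $\mathcal U$ is open, $f$-invariant and meets $\text{Ess}(f)$; since an open $f$-invariant set meeting $\text{Ess}(f)$ is itself essential, $\mathcal U$ is essential, and $\mathcal U\cap G=\emptyset$ gives $\mathcal U\subseteq\bigcup_K B_K$. If $\mathcal U$ is essential but not fully essential, some component of $\mathcal U$ carries an essential loop of rational direction $(p,q)$ and, exactly as for conclusion (b) above, one gets $\rho(\widetilde f)\subseteq\Bbb R(p,q)$, a contradiction. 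If $\mathcal U$ is fully essential, then $\overline G=\mathcal U^c$ is inessential, so it is contained in a single open topological disk $D$ with $\partial D\subseteq\mathcal U\subseteq\bigcup_K B_K$; the compact set $\partial D$ lies in some $B_{K_0}$, and since $\widetilde D$ is bounded and $\widetilde f^n(\widetilde z)$ lies in the bounded region enclosed by $\widetilde f^n(\partial\widetilde D)$ for every $\widetilde z\in\widetilde D$, projecting onto the $v$-axis gives $\text{pr}_v(\widetilde f^n(\widetilde z)-\widetilde z)\le K_0+\text{diam}(\widetilde D)$, that is $D\subseteq B_{K_0+\text{diam}(\widetilde D)}$. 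Hence $\overline G\subseteq D\subseteq\bigcup_K B_K$, whereas $G\cap\bigcup_K B_K=\emptyset$ and $G\neq\emptyset$ — a contradiction. Therefore $G$ is dense in $\text{Ess}(f)$.

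I expect the delicate point to be this second half: one must check that an essential, $f$-invariant open set on which the forward $v$-displacements are only \emph{pointwise} bounded still falls under the dichotomy used in the proof of Proposition~\ref{unbounded_trajectory}. This forces us to re-use the disk-filling estimate from that proof (applied to a disk carrying $\overline G$), rather than a naive Baire argument on the interiors $\text{Int}(B_K)$, together with the rigidity of torus rotation sets in the presence of an invariant annulus; getting the bookkeeping of iterates and of the ``fully essential'' versus ``essential, not fully essential'' alternative exactly right is where the care is needed.
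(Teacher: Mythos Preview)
Your overall strategy matches the paper's: apply Proposition~\ref{unbounded_trajectory} to rule out (a), (b), (c) and obtain (d), then run an essential/fully-essential dichotomy for the density statement. Two steps, however, do not go through as written.

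\textbf{Case (a), fully essential subcase.} You assert that each complementary component of $\pi(\mathrm{Fix}(\widetilde f))$ ``would have bounded lift''. This is not true in general: an open topological disk in $\Bbb T^2$ lifts homeomorphically to a copy in $\Bbb R^2$, but that copy need not be bounded (think of a long thin disk winding many times around a generator). Consequently you cannot conclude that every $\widetilde f$-orbit is bounded. The paper bypasses this by choosing a Birkhoff-typical recurrent point $x$ for an ergodic measure with rotation vector $(\alpha,\beta)$, observing that its lift $\widetilde x$ lies in some $\widetilde D$ with $\widetilde f(\widetilde D)=\widetilde D$, and using that the different integer translates $\widetilde D+(p,q)$ are pairwise disjoint; recurrence then forces $\widetilde f^{n_k}(\widetilde x)$ back into a fixed ball $B(\widetilde x,\delta)\subset\widetilde D$, giving rotation vector $0$ and the desired contradiction. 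Your conclusion $\rho(\widetilde f)=\{0\}$ is correct, but it needs this recurrence argument rather than a boundedness claim.

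\textbf{Density argument, fully essential subcase.} The line ``the compact set $\partial D$ lies in some $B_{K_0}$'' is the genuine gap. The sets $B_K$ are closed and increasing, but a compact subset of $\bigcup_{K>0}B_K$ need \emph{not} lie in a single $B_{K_0}$: nothing prevents points on $\partial D$ from having finite but arbitrarily large forward $v$-displacement. Your final paragraph correctly flags this as the delicate point, but the argument you present does not resolve it. The paper handles it by an explicit subcase split on the essential invariant open set (your $\mathcal U$, their $W$): either there is a \emph{uniform} bound $R$ on the forward $v$-displacements of points in $W$, in which case the disk-filling estimate upgrades this to all of $\Bbb T^2$ and contradicts~(\ref{unbounded assumption}); or there is no uniform bound on $W$, in which case one re-runs the Baire argument from the proof of Proposition~\ref{unbounded_trajectory} inside $W$ to produce a residual set of points in $W$ whose forward orbits are unbounded in direction $v$, contradicting $W\subset\bigcup_K B_K$. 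You need this second branch; the compactness of $\partial D$ alone does not substitute for it.
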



\begin{proof}
Take $v=-(\alpha,\beta)$. By Proposition~\ref{unbounded_trajectory},  we only need to show with respect to the direction $v$,
item (a), item (b) or item (c) do not happen. 
Note that 
items (b) and (c) imply the rotation set contains a 
line segment with rational slope, which is contradictory to the assumption on 
the shape of $\rho(\widetilde f)$. So they can not happen. 

Let us suppose for contradiction that item (a) in Proposition~\ref{unbounded_trajectory} is true. It means that $\text{Fix}(f)$ is not contained in a topological disk. Then, its complement $U$ either has 
a connected component which has a non-trivial homological direction, 
or all of its connected components are disks. 
If the first case happens, the rotation set must be contained in 
a line with rational slope, which is a contradiction. 
If the second case happens, 
by \cite{MZ}, we can consider 
a typical point $x$ for the ergodic measure $\mu$ whose average rotation number is $(\alpha,\beta)$. 
Such a typical point $x$ is $f$-recurrent. Moreover, 
\begin{equation}
\rho(\widetilde f,x)=(\alpha,\beta).
\end{equation}

Clearly, 
$x$ is contained in some 
connected component $D$ 
of $U$, which 
is a topological disk. Up to raising to some power of 
the dynamics, 
we can assume that some connected component 
$\widetilde D$ of $\pi^{-1}(D)$
is $\widetilde f$-invariant. Note $\widetilde D$ contains some lift 
$\widetilde x$.
Then we can choose a small $\delta>0$ such that 
the ball $B(\widetilde x,\delta)\subset \widetilde D$. 
Since $x$ is recurrent, there is an integer sequence
 $\{n_k\}_{k\geq 1}$, so that $f^{n_k}(x)\in B(x,\delta)$ for all $k$.
 Then, 
 \begin{equation}
 \widetilde f^{n_k}(\widetilde x) \in \underset{(p,q)\in \Bbb Z^2}{\bigcup} \big( B(\widetilde x,\delta)+(p,q) \big).
 \end{equation}
However, for any $(p,q)\in \Bbb Z^2\backslash \{(0,0)\}$, 
the set $B(\widetilde x,\delta)+(p,q)$
does not intersect $\widetilde D$ and therefore 
$\widetilde f^{n_k}(\widetilde x)$ does not belong to any of them. 
Therefore, $\widetilde f^{n_k}(\widetilde x) \in B(\widetilde x,\delta)$ for all $k$,
 and this implies 
$(\alpha,\beta)=0$, which is a contradiction. 
This completes the proof of the first assertion. 

Suppose there is some open set 
$U$ intersecting  $\text{Ess}(f)$, consisting of points for whose lift the 
positive half-orbit is bounded in the direction $-(\alpha,\beta)$. 
By definition of $\text{Ess}(f)$, 
\begin{equation}
W=\underset{n\in \Bbb Z}{\bigcup}f^n(U)
\end{equation} 
is essential. It follows for every point in the essential set $W$, its lift has positive half-orbit 
is bounded in the direction $-(\alpha,\beta)$. 
 Note the essential set $W$ is either fully essential, or it contains some annular component. 
 The second case can be excluded, since  as argued before, 
 it will lead to a contradiction with 
 the shape of $\rho(\widetilde f)$. 
We are left with the case when $W$ is fully essential, which means its complement is 
  inessential. Now there are two subcases. 
  
  In subcase one, there is a uniform bound $R$, 
  such that, for all point in $W$, the $\widetilde f$-positive half-orbit of its lift 
  is bounded in the direction $-(\alpha,\beta)$ by $R$. 
  In this case, 
  for any point $x\in W^c$, there is some 
   topological disk $D$ containing $x$, whose boundary circle $\ell$ is included in $W$. 
   Choose $\widetilde x$ and denote by $\widetilde D$ the connected component of $\pi^{-1}(D)$ which contains $\widetilde x$, 
   with boundary circle $\widetilde \ell$. 
   It follows that, the positive half-orbit of $\widetilde x$ is bounded in the direction  $-(\alpha,\beta)$
   by the uniform bound $R + \text{diam}(D)$.
  This shows for every point $\widetilde x \in \Bbb T^2$, its $\widetilde f$-positive half-orbit 
 bounded in the direction $-(\alpha,\beta)$, a contradiction to the assumption.
 
 In subcase two, $\widetilde f$ restricted to $\pi^{-1}(W)$ has unbounded deviation along the direction $-(\alpha,\beta)$. In this case, the beginning paragraphs in the proof of Proposition~\ref{unbounded_trajectory} shows that for a dense $G_\delta$ subset 
 of $W$, the positive $\widetilde f$-half orbit of their lifts are all unbounded in the direction of $-(\alpha,\beta)$, 
 which is a contradiction to the assumption.  
 So we have shown the second assertion. 
 \end{proof}
 
We also consider unbounded forward orbits in the direction $(\alpha,\beta)$ next. 
 \begin{lemma}\label{limsup_is_large}
Define 
 \begin{equation}\label{Q+definition}
Q :=\{ x \in \Bbb T^2 \big| 
 \{ \text{pr}_{(\alpha,\beta)} \big( \widetilde f^n(\widetilde x)-\widetilde x) \}_{n\geq 0} \text{ is unbounded to the right} \}.
 \end{equation}
Then $Q$ is dense in $\text{Ess}(f)$.
\end{lemma}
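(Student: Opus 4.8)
The plan is to deduce the statement from Proposition~\ref{unbounded_trajectory} applied with the direction $v=(\alpha,\beta)$, and then to run the density argument of Lemma~\ref{unbounded_orbits_to_the_left} with $-(\alpha,\beta)$ replaced by $(\alpha,\beta)$ throughout. The one step that genuinely differs is the verification of the hypothesis of Proposition~\ref{unbounded_trajectory} for this $v$, namely that $\sup_{\widetilde x\in\Bbb R^2,\,n\ge 0}\text{pr}_{(\alpha,\beta)}(\widetilde f^n(\widetilde x)-\widetilde x)=+\infty$: in Lemma~\ref{unbounded_orbits_to_the_left} the analogous fact for the direction $-(\alpha,\beta)$ was exactly the standing hypothesis of the proof of Theorem~\ref{main_thm}, whereas for the forward direction $(\alpha,\beta)$ it is automatic. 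Indeed, since $(\alpha,\beta)$ is an extremal point of $\rho(\widetilde f)$, by Lemma~\ref{MZ_rotation_properties} it is realized by an invariant Borel probability measure, and, passing to its ergodic decomposition and using extremality, by an ergodic $\mu$ with $\rho_\mu(\widetilde f)=(\alpha,\beta)$; Birkhoff's theorem applied to the displacement function $\Delta_{\widetilde f}$ then gives $\tfrac1n(\widetilde f^n(\widetilde x)-\widetilde x)\to(\alpha,\beta)$ for $\mu$-a.e.\ $x$, hence $\text{pr}_{(\alpha,\beta)}(\widetilde f^n(\widetilde x)-\widetilde x)\to+\infty$. (This already shows $Q$ has full $\mu$-measure, so $Q\neq\emptyset$; invoking Proposition~\ref{unbounded_trajectory} is for uniformity with Lemma~\ref{unbounded_orbits_to_the_left}.)

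With the hypothesis in place, I would rule out items (a), (b), (c) of Proposition~\ref{unbounded_trajectory} exactly as in the proof of Lemma~\ref{unbounded_orbits_to_the_left}: items (b) and (c) each force $\rho(\widetilde f)$ to contain a non-zero rational vector or a line segment of rational slope, contradicting its assumed shape; and if $\pi(\text{Fix}(\widetilde f))$ were essential, its complement would either have a component with a non-trivial homological direction (again contradicting the shape of $\rho(\widetilde f)$) or consist of topological disks, in which case a recurrent generic point for the measure realizing $(\alpha,\beta)$, lying in a periodic disk whose appropriate lift is $\widetilde f$-invariant, would be forced to have rotation vector $(0,0)$, a contradiction. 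Hence item (d) holds, giving a point of $Q$.

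For density in $\text{Ess}(f)$ I would argue by contradiction as in the second half of the proof of Lemma~\ref{unbounded_orbits_to_the_left}. If an open set $\mathcal U$ meeting $\text{Ess}(f)$ were disjoint from $Q$, then, as $Q$ is $f$-invariant in both directions (a direct consequence of the definition via the displacement cocycle), the set $W=\bigcup_{n\in\Bbb Z}f^n(\mathcal U)$ would be open, $f$-invariant, essential, and disjoint from $Q$. The shape of $\rho(\widetilde f)$ excludes an annular component of $W$, so $W$ is fully essential. If the forward deviation along $(\alpha,\beta)$ is uniformly bounded on $W$, one fills in the inessential complementary disks --- whose lifts are bounded, so that the interior of such a disk inherits the bound from its boundary by the usual Jordan-curve/half-plane argument applied to the iterates $\widetilde f^n$ --- and concludes that \emph{every} point of $\Bbb T^2$ has uniformly bounded forward deviation along $(\alpha,\beta)$, contradicting the supremum established above. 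If it is not uniformly bounded on $W$, the Baire-category argument from the opening paragraphs of the proof of Proposition~\ref{unbounded_trajectory}, carried out inside $W$, produces a point of $W$ lying in $Q$, again a contradiction. I expect the verification of the supremum in the first paragraph to be the only place requiring new input; the remainder is a transcription of the corresponding argument for Lemma~\ref{unbounded_orbits_to_the_left}, with the one mild nuisance being to keep track of the constants in the filling step so as to genuinely contradict $\sup=+\infty$ rather than merely a relative bound.
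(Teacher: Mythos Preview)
Your proposal is correct and follows essentially the same route as the paper: verify the unbounded-deviation hypothesis of Proposition~\ref{unbounded_trajectory} for $v=(\alpha,\beta)$, then repeat verbatim the argument of Lemma~\ref{unbounded_orbits_to_the_left}. The paper's own proof is even terser---it simply asserts $\sup_{\widetilde x,\,n\ge 0}\text{pr}_{(\alpha,\beta)}(\widetilde f^n(\widetilde x)-\widetilde x)=+\infty$ ``by the shape of the rotation set'' and then refers back to the previous lemma---so your explicit justification via an ergodic measure realizing $(\alpha,\beta)$ and Birkhoff's theorem is a more detailed version of the same step, not a different idea.
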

\begin{proof}
This proof is similar with the previous lemma. Note by the shape of the rotation set $\rho(\widetilde f)$, the following holds. 
\begin{equation}
\sup_{\widetilde x\in \Bbb R^2,n\geq 0} \text{pr}_{(\alpha,\beta)} ( \widetilde f^n(\widetilde x)-\widetilde x )=\infty.
\end{equation}
Then the rest argument goes exactly as in the proof of previous lemma. 
\end{proof}

Finally, we show all the leaves of $\mathcal F$ has uniformly bounded diameter. 
\begin{lemma}\label{bounded_leaves} There exists $M_{\widetilde{\mathcal F}}>0$ such that every leaf of $\widetilde{\mathcal F}$ has diameter smaller than $M_{\widetilde{\mathcal F}}$.
\end{lemma}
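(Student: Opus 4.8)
The plan is to build an $\widetilde{\mathcal F}$-transverse loop to which Lemma~\ref{gradient-like} applies. Observe first that, as $\rho(\widetilde f)$ is the segment from $(0,0)$ to the totally irrational vector $(\alpha,\beta)$, its only rational point is $(0,0)$; since the rotation vector of any periodic orbit lies in $\rho(\widetilde f)$, the map $f$ has no non-contractible periodic orbits, and therefore, by Lemmas~\ref{translate_intersection} and~\ref{three_transformations}, no admissible $\widetilde{\mathcal F}$-transverse path may intersect $\widetilde{\mathcal F}$-transversely a non-trivial deck translate of itself, nor may an admissible $\widetilde{\mathcal F}$-bi-recurrent one meet three deck translates of one leaf. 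We also note, applying Lemma~\ref{Guilherme} to $\widetilde f$ and to $\widetilde f^{-1}$, that there is $C>0$ with $|\langle\widetilde f^{n}(\widetilde x)-\widetilde x,\,(\alpha,\beta)^{\perp}\rangle|\le C$ for all $\widetilde x\in\Bbb R^{2}$ and all $n\in\Bbb Z$; hence every $\widetilde f$-orbit, and (up to a uniformly bounded error from the isotopy and the foliation) every transverse trajectory $\widetilde I^{\Bbb R}_{\widetilde{\mathcal F}}(\widetilde x)$, stays inside a strip of bounded width about its base point in the direction $(\alpha,\beta)^{\perp}$.

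Next I would manufacture transverse loops realizing a rich set of homology directions. By Lemma~\ref{MZ_rotation_properties} the extreme point $(\alpha,\beta)$ of $\rho(\widetilde f)$ is realized by an invariant measure, which may be taken ergodic since $(\alpha,\beta)$ is extreme; a generic point $x_{0}$ for it is recurrent with $\rho(\widetilde f,x_{0})=(\alpha,\beta)$. Its transverse trajectory on $\Bbb T^{2}$ is then $\mathcal F$-recurrent and, by Lemma~\ref{lemma_of_continuity} at close returns to a fixed transverse section, draws $\mathcal F$-transverse loops; let $T_{0},T_{1},\dots$ be the homology classes of the loops drawn between consecutive returns. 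The partial sums $\sum_{j<k}T_{j}$ equal, up to a bounded error, the net displacement of the lift, so their $(\alpha,\beta)$-components tend to $+\infty$ while their $(\alpha,\beta)^{\perp}$-components remain bounded. As $(\alpha,\beta)$ is totally irrational no non-zero integer vector is parallel to it, and one uses this together with the boundedness just noted to conclude that the drawn loops cannot all have homology on a single side of $\Bbb R(\alpha,\beta)$; passing to long blocks of consecutive loops then gives $\widetilde{\mathcal F}$-transverse loops with homology classes on each side of $\Bbb R_{+}(\alpha,\beta)$ and with positive $(\alpha,\beta)$-component. Running the same argument on the admissible transverse trajectory of a point from Lemma~\ref{unbounded_orbits_to_the_left}, whose positive half-orbit is unbounded along $-(\alpha,\beta)$ and which is likewise trapped in a strip perpendicular to $(\alpha,\beta)$, and using where needed the density assertions of Lemmas~\ref{unbounded_orbits_to_the_left} and~\ref{limsup_is_large}, produces transverse loops with homology classes on each side of $-\Bbb R_{+}(\alpha,\beta)$. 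In total we obtain transverse loops whose homology classes positively span $\Bbb R^{2}$.

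Finally I would assemble these pieces. Choose four of the loops, $\beta_{1},\beta_{2},\beta_{3},\beta_{4}$, with homology classes $d_{1},d_{2}$ one on each side of $\Bbb R_{+}(\alpha,\beta)$ and $d_{3},d_{4}$ one on each side of $-\Bbb R_{+}(\alpha,\beta)$. Then $(0,0)$ lies in the interior of the convex hull of $\{d_{1},\dots,d_{4}\}$, so there are positive integers $c_{1},\dots,c_{4}$ with $\sum_{i}c_{i}d_{i}=(0,0)$; moreover $d_{1},\dots,d_{4}$ positively span $\Bbb R^{2}$ and may be arranged to generate $\Bbb Z^{2}$, so the multiset formed by $c_{i}$ copies of each $\beta_{i}$ has homology directions representing every element of $H_{1}(\Bbb T^{2},\Bbb Z)$ with positive integer coefficients and summing to $(0,0)$. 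It remains to realize this multiset as a concatenation of transverse loops based at one regular point. For this I would show that the two underlying admissible transverse trajectories — one drifting along $(\alpha,\beta)$, the other along $-(\alpha,\beta)$, both confined to strips perpendicular to $(\alpha,\beta)$ — must, after a suitable deck translation, sweep out annuli of leaves that are mutually crossed, so that Lemma~\ref{drawing_crossing_transverse} furnishes an $\widetilde{\mathcal F}$-transverse intersection between them; iterating this together with the Forcing Proposition (Lemma~\ref{Forcing_proposition}) splices all the required copies of the $\beta_{i}$ at a common transversal into one $\widetilde{\mathcal F}$-transverse loop $\Sigma$ with the prescribed homology data. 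Lemma~\ref{gradient-like} applied to $\Sigma$ then delivers a uniform bound $M_{\widetilde{\mathcal F}}$ for the diameter of every leaf of $\widetilde{\mathcal F}$.

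The main obstacle I anticipate is this final assembly: forcing the $(\alpha,\beta)$-trajectory and the $-(\alpha,\beta)$-trajectory to intersect $\widetilde{\mathcal F}$-transversely after translation and then splicing them into an honestly transverse, orientation-coherent loop carrying the prescribed homology classes, all compatibly with the absence of non-contractible periodic orbits. A secondary difficulty, in the second paragraph, is making precise the claim that the drawn loops spread to both sides of the drift direction, where the total irrationality of $(\alpha,\beta)$ is used essentially.
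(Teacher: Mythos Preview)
Your overall strategy---produce transverse loops whose homology classes surround the origin and invoke Lemma~\ref{gradient-like}---is exactly the paper's. The execution, however, is substantially heavier than needed, and the two obstacles you flag are both artifacts of your route rather than genuine difficulties.

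For the ``positive'' side of the line $\Bbb R(\alpha,\beta)$ the paper does no work at all: it simply cites Lemma~2.1 of~\cite{Guilherme_Thesis}, which already supplies two $\widetilde{\mathcal F}$-transverse paths $\alpha_{1},\alpha_{2}$ with $\alpha_{i}(1)=\alpha_{i}(0)+w_{i}$ and $w_{1},w_{2}$ lying in the two open quadrants with positive $(\alpha,\beta)$-component. Your entire second paragraph (ergodic generic points, consecutive returns, total irrationality forcing homology to spread to both sides) is reproving a special case of that citation.

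For the ``negative'' side the paper uses only the standing contradiction hypothesis (unbounded deviation along $-(\alpha,\beta)$) together with Lemma~\ref{Guilherme}: a compactness/accumulation argument on a single unbounded orbit yields a third transverse path $\alpha_{3}$ with $\alpha_{3}(1)=\alpha_{3}(0)+w_{3}$ and $\mathrm{pr}_{-(\alpha,\beta)}(w_{3})$ as large as one likes, while the perpendicular component stays bounded. For $w_{3}$ large enough the triangle $w_{1},w_{2},w_{3}$ contains the origin in its interior, so positive integers $s_{1},s_{2},s_{3}$ with $\sum s_{i}w_{i}=0$ exist. Three directions suffice; you do not need four.

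Most importantly, there is no splicing step. Lemma~\ref{gradient-like}, read back in its sources (Section~10 of~\cite{LC2005}, Lemma~4.8 of~\cite{Strictly_Toral}), requires only the \emph{existence} of $\mathcal F$-transverse loops on $\Bbb T^{2}$ whose homology classes positively span and admit a positive integer combination summing to zero; neither admissibility nor a shared base point is needed, and the paper applies it directly to the projections of $\alpha_{1},\alpha_{2},\alpha_{3}$. Your proposed use of Lemma~\ref{drawing_crossing_transverse} and the Forcing Proposition to weld trajectories at a common transversal is therefore unnecessary---and, as you half-suspected, would be delicate to carry out, since forcing produces admissible \emph{paths}, not loops with prescribed base points, and you have already observed that translate-transverse intersections are forbidden here.
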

\begin{proof}
Write the "first quadrant" $Q_1:= \{w\in \Bbb R^2 \big| \langle w, v\rangle >0 \text{ and } 
\langle w, v^\perp \rangle >0 \}$ consisting of vectors with positive projections in both $v$ and $v^\perp$. 
Write the  "fourth quadrant" 
$Q_4=\{w\in \Bbb R^2\big| \langle w, v\rangle >0 \text{ and } 
\langle w, v^\perp \rangle <0 \}$. Lemma 2.1 of \cite{Guilherme_Thesis} says that one can find two $\widetilde{\mathcal{F}}$-transverse paths, $\alpha_1, \alpha_2:[0,1]\to \mathrm{Dom}(\widetilde{\mathcal{F}})$ such that $\alpha_1(1)=\alpha_1(0)+ w_1$, $\alpha_2(1)=\alpha_1(0)+ w_2$, where $w_1\in \Bbb Z^2\cap Q_1$ and $w_2\in \Bbb Z^2\cap Q_4$.

Recall the contradiction assumption says that there exists some point $\widetilde x_0$ whose positive half-orbit is unbounded in the $(-\alpha,\beta)$ direction. 
A standard compactness argument shows that one can find 
a point $\widetilde z$ which is not fixed by $\widetilde f$, 
an increasing sequence of $(n_k)_{ k\in \Bbb N}$ of positive integers, and a sequences of vectors $(u_k)_{k\in\Bbb N}$ in $\Bbb Z^2$, such that $\lim_{k\to\infty} \widetilde f^{n_k}(\widetilde x_0)-u_k=\widetilde z$, and such that $\lim_{k\to\infty}\mathrm{Pr}_{-(\alpha, \beta)}(u_k)=\infty$. 
By Lemma~\ref{Guilherme}, the projections of the $u_k$ in the direction perpendicular to $(\alpha, \beta)$ are uniformly bounded by a constant $L_0$.

We get, using Lemma~\ref{lemma_of_continuity}, that if $k_1<k_2$ are sufficiently large, 
then the path
$\widetilde I^{[n_{k_1}-1, n_{k_2}+1]}_{\widetilde{\mathcal{F}}}(\widetilde x_0)$ contains a subpath that is 
$\widetilde {\mathcal F}$-equivalent to an $\widetilde{\mathcal{F}}$-transverse path joining 
$\widetilde z+ u_{k_1}$ to $\widetilde z+ u_{k_2}$.  Note that, there exists some $L_1>0$ such that, if a vector $v$ is such that $\text{pr}_{-(\alpha,\beta)}(v)>L_1$ while $\vert\mathrm{pr}_{(\beta,-\alpha)}(v)\vert<L_0$, then the triangle whose vertices are $v, w_1$ and $w_2$ has the origin in its interior.  
Clearly, if we choose $k_1$ large and $k_2 \ll k_1$, 
then $\text{pr}_{-(\alpha,\beta)}(u_{k_2}-u_{k_1})>L_1$. This way, we find a third transverse path $\alpha_3:[0,1]\to \mathrm{Dom}(\widetilde{\mathcal{F}})$ such that $\alpha_3(1)=\alpha_3(0)+ w_3$ and such that the triangle whose vertices are $w_1, w_2$ and $w_3$ has the origin in its interior. This implies that there are positive integers $s_1,s_2,s_3$, such that $\sum_{i=1}^3s_i w_i=0$. Finally, one can apply Lemma~\ref{gradient-like} to conclude that
the diameter of every leaf of $\widetilde{\mathcal F}$ is bounded by some uniform constant $M_{\widetilde {\mathcal F}}$. 
\end{proof}

\subsection{Setting the Barrier and End of Proof}
In the rest of the argument, the main idea is to
put an infinite strip as a barrier, which is bounded in the horizontal direction. Then 
we use Lemma~\ref{unbounded_orbits_to_the_left} and Lemma~\ref{limsup_is_large} 
alternatively to construct ''oscilating" isotopy path, which passes the barrier as many times as we like, 
which will eventually become too crowded and satisfy the conditions for Proposition~\ref{new_prop}.

We need the following result, whose proof is exactly equal to the proofs of Lemma~\ref{local_good_neighbourhood} and the end of the proof of Theorem~~\ref{non-wandering_Bounded_M} and therefore is omitted.
\begin{lemma}\label{corta5}
There exists some $N_\ast>0$ and a finite set of leaves $\{\widetilde \phi_1, \widetilde\phi_2, \hdots, \widetilde\phi_k\}$ of $\widetilde{\mathcal{F}}$ such that, if $n>0$ and 
$\Vert{\widetilde f^n}(\widetilde x)-\widetilde x)\Vert>N_\ast$, then there exists some 
$j\in\{1, \hdots, k\}$ and five different vectors $w_i \in \Bbb Z^2$ for $1\le i\le 5$, 
such that $\widetilde I^n_{\widetilde{\mathcal{F}}}(\widetilde x)$ intersects $\widetilde \phi_j+w_i, 1\le i\le 5$.
\end{lemma}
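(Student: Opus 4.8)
The plan is to follow, essentially line by line, the proof of Lemma~\ref{local_good_neighbourhood} together with the counting step at the end of the proof of Theorem~\ref{non-wandering_Bounded_M}, simply stopping one step earlier, before the appeal to Corollary~\ref{five_translates_we_will_use}. Recall that here $S=\Bbb T^2$, so that the deck group is $\Bbb Z^2$ acting by translations; recall also that, under our standing contradiction hypothesis that $\widetilde f$ has unbounded deviation along $-(\alpha,\beta)$, the proof of Lemma~\ref{unbounded_orbits_to_the_left} shows that item (a) of Proposition~\ref{unbounded_trajectory} fails, so that $\mathrm{Fix}(f)$ is inessential and hence so is $\mathrm{Fix}(I)$. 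This is exactly the hypothesis under which Lemma~\ref{local_good_neighbourhood} was proved, so it applies here verbatim.

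First I would invoke Lemma~\ref{local_good_neighbourhood} to obtain a bounded fundamental domain $\widetilde S_0$ and bounded open disks $\widetilde E_1,\dots,\widetilde E_\kappa$ satisfying properties (1)--(3); I set $k:=\kappa$ and take $\widetilde\phi_1,\dots,\widetilde\phi_k$ to be the leaves supplied by property (3), so that $\widetilde I^2_{\widetilde{\mathcal F}}(\widetilde f^{-1}(\widetilde z_0))$ intersects $\widetilde\phi_j$ whenever $\widetilde z_0\in\widetilde E_j$. With $L_0=\mathrm{diam}(\widetilde S_0)$ and $L_1=\max_{\widetilde z}\|\widetilde f(\widetilde z)-\widetilde z\|$, I put $N_\ast=(4\kappa+4)(L_0+L_1)$, the constant being taken slightly larger than the $4\kappa+2$ appearing in Theorem~\ref{non-wandering_Bounded_M} in order to also absorb the one-step index shifts below.

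Now suppose $n>0$ and $\|\widetilde f^n(\widetilde x)-\widetilde x\|>N_\ast$. Exactly as in Theorem~\ref{non-wandering_Bounded_M}, since consecutive iterates move by at most $L_1$, I can choose integers $1\le k_1<\dots<k_{4\kappa+2}\le n-1$ with $\|\widetilde f^{k_l}(\widetilde x)-\widetilde f^{k_{l'}}(\widetilde x)\|>L_0$ for $l\neq l'$, hence pairwise distinct $v_1,\dots,v_{4\kappa+2}\in\Bbb Z^2$ with $\widetilde f^{k_l}(\widetilde x)\in v_l+\widetilde S_0$. For each $l\le 4\kappa+1$, the orbit segment from $\widetilde f^{k_l}(\widetilde x)$ to $\widetilde f^{k_{l+1}}(\widetilde x)$ meets both $v_l+\widetilde S_0$ and its complement, so property (2) yields $k_l\le k'_l<k_{l+1}$ and $j_l\in\{1,\dots,\kappa\}$ with $\widetilde f^{k'_l}(\widetilde x)\in v_l+\widetilde E_{j_l}$. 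Distributing the $4\kappa+1$ indices $l$ among the $\kappa$ possible values, the pigeonhole principle gives a value $j_\ast$ with $j_l=j_\ast$ for at least $\lceil(4\kappa+1)/\kappa\rceil=5$ distinct indices $l$; let $w_1,\dots,w_5$ be the corresponding pairwise distinct vectors $v_l$. For each such $l$, writing $\widetilde z_0=\widetilde f^{k'_l}(\widetilde x)-v_l\in\widetilde E_{j_\ast}$ and translating property (3) by $v_l$, using that $\widetilde f$ and $\widetilde{\mathcal F}$ are equivariant under $\Bbb Z^2$, I get that $\widetilde I^2_{\widetilde{\mathcal F}}(\widetilde f^{k'_l-1}(\widetilde x))$ intersects $\widetilde\phi_{j_\ast}+v_l$; since $1\le k'_l-1$ and $k'_l+1\le n$ by our choice of the $k_l$, this length-two transverse path is $\widetilde{\mathcal F}$-equivalent to a subpath of $\widetilde I^n_{\widetilde{\mathcal F}}(\widetilde x)$, so $\widetilde I^n_{\widetilde{\mathcal F}}(\widetilde x)$ intersects $\widetilde\phi_{j_\ast}+v_l$. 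Running over the five indices gives the statement, with $j=j_\ast$ and the five vectors $w_1,\dots,w_5$.

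As the paper already signals, there is no genuine obstacle here: all the substance is contained in Lemma~\ref{local_good_neighbourhood} (which rests on the inessentiality of $\mathrm{Fix}(I)$ and on the continuity Lemma~\ref{lemma_of_continuity}) and in the counting step of Theorem~\ref{non-wandering_Bounded_M}. The only points requiring a little care are the index bookkeeping, ensuring the length-two subpaths genuinely sit inside $\widetilde I^n_{\widetilde{\mathcal F}}(\widetilde x)$, which is why I kept $k_1\ge 1$, $k_{4\kappa+2}\le n-1$ and inflated $N_\ast$, and the equivariant restatement of property (3) of Lemma~\ref{local_good_neighbourhood}; both are routine.
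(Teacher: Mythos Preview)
Your proposal is correct and follows precisely the route the paper indicates: it reproduces the construction of Lemma~\ref{local_good_neighbourhood} (using that $\mathrm{Fix}(I)$ is inessential, which is established in the proof of Lemma~\ref{unbounded_orbits_to_the_left}) and then the pigeonhole counting from the end of the proof of Theorem~\ref{non-wandering_Bounded_M}, stopping just before the appeal to Corollary~\ref{five_translates_we_will_use}. The small index adjustments you make to ensure the length-two subpaths sit inside $\widetilde I^n_{\widetilde{\mathcal F}}(\widetilde x)$ are exactly the sort of bookkeeping the paper leaves implicit.
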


Next, we show that there are trajectories that largely ``oscilate´´ in the horizontal direction as many times as necessary:

\begin{lemma}\label{oscila}
Given any $L>0, \varepsilon>0$ and $\widetilde x_0 \in (-1,0)^2$ which is a lift of 
an $f$-essential point $x_0$, there exists some $\widetilde y_L$ with $\Vert\widetilde x_0 -\widetilde y_L\Vert<\varepsilon$ and integers $n_i, m_i, 1\le i\le L$  with $n_i<m_i<n_{i+1}$ and such that:
\begin{itemize}
\item $\widetilde f^{n_i}(\widetilde y_L)$ is in $(-\infty, 0)\times \Bbb R$ and $\widetilde f^{m_i}(\widetilde y_L)$ is in $(N_\ast, \infty)\times \Bbb R$;
\item For $n_i<n<m_i$, $\widetilde f^{n}(\widetilde y_L)$ is in $[0, N_\ast]\times \Bbb R$.
\end{itemize}
\end{lemma}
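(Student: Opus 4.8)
The plan is to build the oscillating orbit inductively, using Lemma~\ref{unbounded_orbits_to_the_left} and Lemma~\ref{limsup_is_large} alternately to push a forward iterate far to the left and then far to the right, while shadowing everything inside the small neighborhood of $\widetilde x_0$ by means of the continuity of iteration. First I would fix the $f$-essential point $x_0$ with lift $\widetilde x_0 \in (-1,0)^2$ and the targets $L, \varepsilon$. Since $x_0$ is essential, both Lemma~\ref{unbounded_orbits_to_the_left} (density of points whose positive half-orbit is unbounded in the $-(\alpha,\beta)$ direction, hence unbounded to the left in the first coordinate, since $(\alpha,\beta)$ is totally irrational so the first coordinate also goes to $-\infty$ along a subsequence) and Lemma~\ref{limsup_is_large} (density of $Q$, where the projection onto $(\alpha,\beta)$ — hence the first coordinate — is unbounded to the right) give points with the desired one-sided unbounded behavior arbitrarily close to $\widetilde x_0$.

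The core is a finite recursion of $2L$ steps producing a nested sequence of open balls $\widetilde x_0 \in B_{2L} \subset B_{2L-1} \subset \cdots \subset B_1 \subset B(\widetilde x_0,\varepsilon)$ and times $n_1 < m_1 < n_2 < \cdots < n_L < m_L$. At an odd step $2i-1$ (``go left''): inside the current ball $B_{2i-2}$, which intersects $\mathrm{Ess}(f)$ because $x_0 \in \overline{\mathrm{Ess}(f)}$ and the balls shrink to $\widetilde x_0$, pick (by Lemma~\ref{unbounded_orbits_to_the_left}) a point $\widetilde y$ whose forward orbit has first coordinate going to $-\infty$; choose $n_i > m_{i-1}$ with $\widetilde f^{n_i}(\widetilde y)$ in $(-\infty,0)\times\mathbb R$ and, moreover, so that on the time window $(m_{i-1}, n_i)$ the orbit has already escaped the vertical strip $[0,N_\ast]\times\mathbb R$ to the left (this is automatic once the first coordinate is $<0$; I will arrange along the way that between consecutive ``crossing'' times the orbit sits in $[0,N_\ast]\times\mathbb R$ by choosing $n_i, m_i$ to be the relevant first entry/exit times of that strip); then by continuity of $\widetilde f^{n_i}$ shrink to a ball $B_{2i-1}$ on which $\widetilde f^{n_i}$ still maps into $(-\infty,0)\times\mathbb R$. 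At an even step $2i$ (``go right''): inside $B_{2i-1}$ pick (by Lemma~\ref{limsup_is_large}) a point whose forward first coordinate is unbounded to the right, choose $m_i > n_i$ with $\widetilde f^{m_i}$ landing in $(N_\ast,\infty)\times\mathbb R$, and shrink to $B_{2i}$ accordingly. Taking $\widetilde y_L$ to be any point of $B_{2L}\cap\mathrm{Ess}(f)$ (or simply in $B_{2L}$) yields the times $n_i,m_i$ simultaneously for $\widetilde y_L$.

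The one point requiring care is the middle bullet: for $n_i < n < m_i$ we need $\widetilde f^n(\widetilde y_L) \in [0,N_\ast]\times\mathbb R$, and similarly we want the left-excursion to genuinely cross the strip. I would handle this by redefining $n_i$ to be the \emph{last} time before $m_i$ at which the first coordinate is $\le 0$ and $m_i$ the \emph{first} time after that at which the first coordinate is $\ge N_\ast$; continuity then lets me freeze these combinatorial choices on a small enough ball. The main obstacle, then, is bookkeeping: making sure the finitely many ball-shrinkings and the first-entry/first-exit redefinitions are mutually consistent and that the final ball still meets $\mathrm{Ess}(f)$ — but since at each stage I only shrink a ball around the \emph{fixed} point $\widetilde x_0$, which lies in $\overline{\mathrm{Ess}(f)}$, and only finitely many times, this is routine. (If one prefers, one can instead note that the set of $\widetilde y$ realizing the first $j$ oscillations is open, and that Lemma~\ref{unbounded_orbits_to_the_left} and Lemma~\ref{limsup_is_large} guarantee it is nonempty and accumulates on $\widetilde x_0$; then a standard nested-open-set argument finishes it.)
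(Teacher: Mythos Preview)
Your proposal is correct and follows essentially the same inductive scheme as the paper: alternate between Lemma~\ref{unbounded_orbits_to_the_left} and Lemma~\ref{limsup_is_large} to produce successive left and right excursions, use continuity of finitely many iterates to freeze the already-constructed crossings on a shrinking neighborhood, and then adjust $n_i,m_i$ to be the appropriate last/first crossing times of the strip to secure the second bullet. The paper organizes the induction as a sequence of essential points $\widetilde y_L$ at distance at most $(1-2^{-L})\varepsilon$ from $\widetilde x_0$ rather than as nested balls, and (like you) leaves implicit that unboundedness in the $\pm(\alpha,\beta)$ direction forces unboundedness of the first coordinate via the perpendicular bounded-deviation Lemma~\ref{Guilherme}; one small slip in your write-up is that the shrunk balls are naturally centered at the newly chosen essential points rather than at $\widetilde x_0$, but since those points are themselves essential this does not affect the argument.
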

\begin{proof}
The proof is elementary so we will be a bit sketchy. 
Fix $\varepsilon>0$. We look for each $y_L$ inductively. 
We can assume that $\varepsilon$ is sufficiently small 
such that $B_{\varepsilon}(\widetilde x_0)\subset (-1,0)^2$.
For $L=1$, note that,  by Lemma~\ref{limsup_is_large}, we can find an essential point $y_1$ which is 
$\varepsilon/2$ close to $\pi(\widetilde x_0)$ such that, if $\widetilde y_1$ is a lift of $y_1$ 
that is $\varepsilon/2$ close to $\widetilde x_0$, then  
$\text{pr}_{(\alpha,\beta)} \left( \widetilde f^n(\widetilde y_1)-\widetilde y_1\right)$ is unbounded from above.  
The existence of $n_1$ and $m_1$ follows easily from here. 
Now, given an integer $L$, a point $\widetilde y_L$ that projects to an essential point and that is at distance at most $(1-2^{-L})\varepsilon$ of $\widetilde x_0$ and the sequences of integer $n_i, m_i$, with $1\le i\le L$, 
satisfying the desired properties, we first find, using Lemma~\ref{unbounded_orbits_to_the_left} some point $\widetilde z_L$, projecting to an essential point, and having a future orbit unbounded in the $-(\alpha, \beta)$ direction. 
Furthermore, we can assume $\widetilde z_L$ is sufficiently close to $\widetilde y_L$ so that $\widetilde f^{n_i}(\widetilde z_L)$ is in $(-\infty, 0)\times \Bbb R$ and $\widetilde f^{m_i}(\widetilde z_L)$ is in $(N_\ast, \infty)\times \Bbb R$, for $1\le i\le L$. There exists some integer $s_L>m_L$ such that $\widetilde f^{s_L}(\widetilde z_L)$ is in $(-\infty, 0)\times \Bbb R$. We get the point $\widetilde y_{L+1}$ by applying again  Lemma~\ref{limsup_is_large} to $\widetilde z_L$ and finding a point whose orbit follows that of $\widetilde z_L$ sufficiently close until at least the iterate $s_L$, but for which the forward orbit is unbounded in the direction $(\alpha, \beta)$.
The induction step finishes and the conclusion follows because this induction process never ends. 
\end{proof}

\begin{lemma}\label{valordeM}
There exists some $M>0$ so that the following holds. 
For any $\widetilde z\in [-1,0]^2$, and any $1\le j\le k$, suppose there are $\overline{n}<\overline{m}$ such that,\begin{itemize}
\item $\widetilde f^{\overline{n}}(\widetilde z)$ is in $(-\infty, 0)\times \Bbb R$ and $\widetilde f^{\overline{m}}(\widetilde z)$ is in $(N_\ast, \infty)\times \Bbb R$;
\item For $\overline{n}<n<\overline{m}$, $\widetilde f^{n}(\widetilde z)$ is in $[0, N_\ast]\times \Bbb R$.
\item 
$\widetilde I^{[\overline{n}, \overline{m}]}_{\widetilde{\mathcal{F}}}(\widetilde z)$ intersects $\widetilde \phi_j+w$ with $w\in \Bbb Z^2$.
\end{itemize}
Then $\Vert w\Vert<M$.
\end{lemma}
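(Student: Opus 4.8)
The plan is to confine the entire orbit segment $\widetilde f^{\overline n}(\widetilde z),\dots,\widetilde f^{\overline m}(\widetilde z)$ to a ball whose radius does not depend on $\widetilde z,\overline n,\overline m$, then transfer this confinement to the transverse trajectory $\widetilde I^{[\overline n,\overline m]}_{\widetilde{\mathcal F}}(\widetilde z)$; once that is done, a leaf $\widetilde\phi_j+w$ meeting a uniformly bounded set must have $\|w\|$ uniformly bounded, which is the assertion.

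First I would record two confinement facts. Write $L_1=\max_{\widetilde w}\|\widetilde f(\widetilde w)-\widetilde w\|$. The middle bullet gives $\text{pr}_{(1,0)}(\widetilde f^n(\widetilde z))\in[0,N_\ast]$ for $\overline n<n<\overline m$, and since consecutive iterates move by at most $L_1$ the two endpoint conditions force $\text{pr}_{(1,0)}(\widetilde f^n(\widetilde z))\in[-L_1,N_\ast+L_1]$ for every $\overline n\le n\le\overline m$; this is confinement in the horizontal direction. For a transverse direction I would use that $\rho(\widetilde f)$ is the segment from $(0,0)$ to $(\alpha,\beta)$, so that $(\beta,-\alpha)$ is perpendicular to it, and invoke Lemma~\ref{Guilherme}, applied both to $\widetilde f$ and to $\widetilde f^{-1}$ (whose rotation set is the opposite segment, with the same irrational slope), to obtain a constant $C>0$ with $|\text{pr}_{(\beta,-\alpha)}(\widetilde f^n(\widetilde z)-\widetilde z)|<C$ for all $n$; as $\widetilde z\in[-1,0]^2$ this yields $|\text{pr}_{(\beta,-\alpha)}(\widetilde f^n(\widetilde z))|<C+\sqrt2$ for all $\overline n\le n\le\overline m$. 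Because $(\alpha,\beta)$ is totally irrational we have $\alpha\ne0$, hence $(1,0)$ and $(\beta,-\alpha)$ are linearly independent, and the two bounds together place $\{\widetilde f^n(\widetilde z)\mid\overline n\le n\le\overline m\}$ inside a parallelogram; I would fix $R_1>0$, depending only on $N_\ast$, $L_1$, $C$ and $(\alpha,\beta)$, with this parallelogram contained in the ball $B(0,R_1)$.

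Next I would transfer this to the transverse trajectory. Write $L_I=\sup_{\widetilde w,\,t\in[0,1]}\|\widetilde f_t(\widetilde w)-\widetilde w\|$. The path $\widetilde I^{[\overline n,\overline m]}_{\widetilde{\mathcal F}}(\widetilde z)$ is a concatenation of one-step pieces, the piece over $[k,k+1]$ joining $\widetilde f^k(\widetilde z)$ to $\widetilde f^{k+1}(\widetilde z)$ and being homotopic, relative to endpoints in $\text{Dom}(\widetilde{\mathcal F})$, to the isotopy step $t\mapsto\widetilde f_t(\widetilde f^k(\widetilde z))$, whose image lies in $B(\widetilde f^k(\widetilde z),L_I)\subset B(0,R_1+L_I)$. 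Lifting both the one-step transverse piece and the isotopy step to the non-singular covering $\text{Dom}(\mathcal F)^{\text{uni}}$, I would use the Brouwer-line property recalled after Lemma~\ref{transverse_foliation_thm}: there every leaf is a properly embedded line separating the plane, and an $\widetilde{\mathcal F}$-transverse path, always crossing from the right to the left of a leaf, can meet each leaf at most once (returning to the side it started on would need an even number of crossings). Hence any leaf of $\widetilde{\mathcal F}$ met by a one-step transverse piece is also met by the corresponding isotopy step, and therefore meets $B(0,R_1+L_I)$. Applying this to the leaf $\widetilde\phi_j+w$: by Lemma~\ref{bounded_leaves} it has diameter less than $M_{\widetilde{\mathcal F}}$, so it is contained in $B(0,R_1+L_I+M_{\widetilde{\mathcal F}})$; writing $\widetilde\phi_j\subset B(q_j,M_{\widetilde{\mathcal F}})$ for the finitely many leaves furnished by Lemma~\ref{corta5}, this gives $\|w\|\le\max_{1\le j\le k}\|q_j\|+R_1+L_I+2M_{\widetilde{\mathcal F}}$, and I would take this quantity as $M$.

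I expect the two confinement facts to be routine; the step requiring genuine care is the transfer from the orbit to the transverse trajectory, since a transverse path homotopic to a short isotopy path could a priori make large excursions. Those excursions are ruled out precisely by the uniform boundedness of the leaves (Lemma~\ref{bounded_leaves}, which itself rests on the standing contradiction hypothesis through Lemma~\ref{gradient-like}) together with the single-crossing property of transverse paths in the non-singular cover; making the lifting to $\text{Dom}(\mathcal F)^{\text{uni}}$ clean in the presence of singularities of $\widetilde{\mathcal F}$ is the one slightly delicate bookkeeping point.
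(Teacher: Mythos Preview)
Your proof is correct and follows essentially the same route as the paper's: confine the orbit segment to a compact set using the horizontal constraint from the hypotheses together with the perpendicular bounded deviation (Lemma~\ref{Guilherme}), enlarge to a compact set containing the isotopy trajectory, observe that every leaf met by the transverse trajectory is also met by the homotopic isotopy trajectory, and finish using the uniform bound on leaf diameters (Lemma~\ref{bounded_leaves}). The only cosmetic differences are that you work piece by piece over $[k,k+1]$ and spell out the single-crossing argument in $\text{Dom}(\mathcal F)^{\text{uni}}$ where the paper simply asserts that homotopic paths meet the same leaves, and that you explicitly apply Lemma~\ref{Guilherme} to $\widetilde f^{-1}$ to cover negative iterates while the paper just says ``the whole orbit''.
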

\begin{proof}
Note first that, by Lemma~\ref{Guilherme}, there exists some constant $M_1$ such that the whole orbit of any point $\widetilde z$ that lies in $[-1,0]^2$ lies in a strip whose projection into a direction perpendicular to $(\alpha,\beta)$ has value less then $M_1$. This, and the hypotheses on $\overline{n}$ and $\overline{m}$ imply that there exists a compact set $K_1$ that contains $\widetilde f^n(\widetilde z), \overline{n}\le n\le \overline{m}$ if $\widetilde z \in
[-1,0]^2$, $\overline{n}$ and $\overline{m}$ are as in the hypothesis. In particular, from the continuity of the isotopy $\widetilde I$, we have that there exists a compact set $K\supset K_1$ such that, if $\widetilde y$ lies in $K_1$, then $\widetilde{I}^{[0,1]}(\widetilde y)$ lies in $K$. 
We deduce that $\widetilde{I}^{[\overline{n}, \overline{m}]}(\widetilde z)$ is contained in $K$.

Furthermore, since $\widetilde I^{[\overline{n}, \overline{m}]}_{\widetilde{\mathcal{F}}}(\widetilde z)$ is homotopic with fixed endpoints in $\mathrm{Dom}(\widetilde{\mathcal{F}})$ to $\widetilde{I}^{[\overline{n}, \overline{m}]}(\widetilde z)$ and since $\widetilde I^{[\overline{n}, \overline{m}]}_{\widetilde{\mathcal{F}}}(\widetilde z)$  is transverse to $\widetilde{\mathcal{F}}$, one has that every leaf that is intersected by 
$\widetilde I^{[\overline{n}, \overline{m}]}_{\widetilde{\mathcal{F}}}(\widetilde z)$  must also be intersected by $\widetilde{I}^{[\overline{n}, \overline{m}]}(\widetilde z)$. 
But since the leaves of $\widetilde{\mathcal{F}}$ are uniformly bounded, 
and only a finite number of integer translates of each $\widetilde \phi_j$ can intersect $K$, and the result follows.
\end{proof}

\begin{proof}[End of Proof of Theorem~\ref{main_thm}] 
This is now a simple application of Pigeon's hole principle.
The total number of sequences of five different elements in $\Bbb Z^2$ 
that have norm smaller than $M$ given by Lemma~\ref{valordeM} is upper bounded by some constant $k_1$. 
Pick $\widetilde x_0$ as in Lemma~\ref{oscila}, and pick $\varepsilon$ so that $B_{\varepsilon}(\widetilde x_0)\subset (-1,0)^2$. Pick $L>k_1.k+1$, where $k$ is given by Lemma~\ref{corta5}, and pick $\widetilde y_L$, $n_i, m_i, 1\le i\le L$ again as in Lemma~\ref{oscila}. We deduce that there exists some $1\le j_0\le k$ and a sequence of five distinct $\Bbb Z^2$ vectors $w_1, w_2, w_3, w_4, w_5$ and some $1\le i_1<i_2\le L$ such that both $\widetilde I^{[n_{i_1}, m_{i_1}]}_{\widetilde{\mathcal{F}}}(\widetilde y_L)$ 
and 
$\widetilde I^{[n_{i_2}, m_{i_2}]}_{\widetilde{\mathcal{F}}}(\widetilde y_L)$ intersect, in order, $\phi_{j_0}+w_i, 1\le i\le 5$. The theorem then follows by applying Proposition ~\ref{new_prop} to a re-parametrization of  
$\widetilde I^{[n_{i_1}, m_{i_2}]}_{\widetilde{\mathcal{F}}}(\widetilde y_L)$. 
\end{proof}

\begin{proof}[Proof of Theorem~\ref{rewritten_thm}] 
If $f$ is Hamiltonian the result follows directly from Lemma~\ref{hamiltonianlemma}. Assume then that $f$ is not Hamiltonian. 
By assumption, $\widetilde f$ admits fixed point and thus the rotation set $\rho(\widetilde f)$ contains $(0,0)$. Recall that $\rho(\widetilde f)$ is convex compact subset of $\Bbb R^2$. If it has interior, then it contains some rational vector in its interior. Therefore, 
by Lemma~\ref{franks1989realizing} $f$ admits non-contractable periodic orbit, which is excluded by our assumption. A similar argument, using the fact that $f$ preserves area and Lemma~\ref{lemmafranks2}, shows that $\rho(\widetilde f)$ cannot be a segment with two rational points, therefore it is either just the origin or a line segment with irrational slope containing $(0,0)$. Since we have also assumed that $f$ is not a non-Hamiltonian, the possibility that $\rho(\widetilde f)=\{(0,0)\}$ is also excluded.
By Lemma~\ref{ThmC_Forcing}, $\rho(\widetilde f)$ must be a segment whose two endpoints are $(0,0)$ and some vector $(\alpha,\beta)$ with $\alpha/ \beta$ irrational. Then, Theorem~\ref{main_thm} and Lemma~\ref{Guilherme} 
together show that 
$\widetilde f$ has a strong irrational dynamical direction $(\alpha,\beta)$.
\end{proof}

\section{Acknowledgements}
X-C. Liu is supported by 
Fapesp P\'os-Doutorado grant (Grant Number
 2018/03762-2). The authors thank Salvador Addas-Zanata for many helpful conversations. 
\bibliographystyle{plain}
\addcontentsline{toc}{chapter}{Bibliography}
\bibliography{BDeviation}
\end{document}